\newcommand{\tiniest}[1]{\tiny{#1}}
\newtheorem{theorem}{Theorem}[section]
\newtheorem{lemma}[theorem]{Lemma}
\newtheorem{prop}[theorem]{Proposition}
\newtheorem{conjecture}[theorem]{Conjecture}
\crefname{section}{Section}{Sections}
\crefname{lemma}{Lemma}{Lemmas}
\crefname{theorem}{Theorem}{Theorems}
\theoremstyle{definition}
\newtheorem{defn}[theorem]{Definition}
\newtheorem{example}[theorem]{Example}
\newtheorem{question}[theorem]{Question}
\theoremstyle{remark}
\newtheorem{remark}[theorem]{Remark}
\numberwithin{equation}{section}
\newcommand{\T}{\mathfrak{t}}
\DeclareMathOperator{\PDS}{PDS}
\DeclareMathOperator{\Sym}{Sym}
\newcommand{\NN}{\mathbb{N}}
\DeclareMathOperator{\perm}{perm}
\newcommand{\Qc}{\mathcal{Q}}
\newcommand{\op}{\rho}
\DeclareMathOperator{\MLQ}{MLQ}
\DeclareMathOperator{\dg}{\texttt{dg}}
\DeclareMathOperator{\Top}{top}
\newcommand{\Z}{\mathbb{Z}}
\DeclareMathOperator{\inona}{inv-non-attacking}
\DeclareMathOperator{\qnona}{quinv-non-attacking}
\DeclareMathOperator{\isort}{inv-sorted}
\DeclareMathOperator{\qsort}{quinv-sorted}
\DeclareMathOperator{\cisort}{coinv-sorted}
\DeclareMathOperator{\cqsort}{coquinv-sorted}
\newcommand{\HH}{\widetilde{H}}
\DeclareMathOperator{\arm}{\texttt{arm}}
\DeclareMathOperator{\Arm}{Arm}
\DeclareMathOperator{\Rarm}{\widehat{Arm}}
\DeclareMathOperator{\rarm}{\widehat{\texttt{arm}}}
\DeclareMathOperator{\coquinv}{\texttt{coquinv}}
\DeclareMathOperator{\leg}{\texttt{leg}}
\DeclareMathOperator{\maj}{\texttt{maj}}
\DeclareMathOperator{\inv}{\texttt{inv}}
\DeclareMathOperator{\inc}{inc}
\DeclareMathOperator{\dec}{dec}
\DeclareMathOperator{\prob}{prob}
\DeclareMathOperator{\sort}{sort}
\DeclareMathOperator{\coinv}{\texttt{coinv}}
\DeclareMathOperator{\wt}{wt}
\DeclareMathOperator{\quinv}{\texttt{quinv}}
\DeclareMathOperator{\Bot}{bot}
\DeclareMathOperator{\HHL}{HHL}
\DeclareMathOperator{\PR}{PR}
\DeclareMathOperator{\Tab}{Tab}
\DeclareMathOperator{\South}{South}
\newcommand{\qbinom}[2]{\bgroup\renewcommand*{\arraystretch}{1}\begin{bmatrix} #1 \\ #2\end{bmatrix} \egroup}
\newlength\cellsize \setlength\cellsize{12\unitlength}
\newcommand\cellify[1]{\def\thearg{#1}\def\nothing{}%
\ifx\thearg\nothing
\vrule width0pt height\cellsize depth0pt\else
\hbox to 0pt{\usebox2\hss}\fi%
\vbox to 12\unitlength{
\vss
\hbox to 12\unitlength{\hss$#1$\hss}
\vss}}
\newcommand\tableau[1]{\vtop{\let\\=\cr
\setlength\baselineskip{-16000pt}
\setlength\lineskiplimit{16000pt}
\setlength\lineskip{0pt}
\halign{&\cellify{##}\cr#1\crcr}}}
\newcommand\expath[1]{%
\hbox to 0pt{\usebox3\hss}%
\vbox to 12\unitlength{
\vss
\hbox to 12\unitlength{\hss$#1$\hss}
\vss}}
\newcommand\cell[3]{
\def\i{#1} \def\j{#2} \def\entry{#3}

\draw (\j-1,-\i)--(\j,-\i)--(\j,-\i+1)--(\j-1,-\i+1)--(\j-1,-\i);
\node at (\j-.5,-\i+.5) {\entry};
}
\newcommand{\qtrip}[3]{
\begin{tikzpicture}[scale=0.5]
\cell{1}{0}{#1} \cell{2}{0}{#2} \cell{2}{2.7}{#3}
\node at (1,-1.5) {$\cdots$};
\end{tikzpicture}
}
\newcommand{\HHLtrip}[3]{
\begin{tikzpicture}[scale=0.5]
\cell{1}{0}{#1} \cell{2}{0}{#2} \cell{1}{2.7}{#3}
\node at (1,-.5) {$\cdots$};
\end{tikzpicture}
}
\newcommand\circleS[4]{
\def \n {4}
\def \radius {.5cm}
\def \margin {20}
\edef\s{0}
 \pgfmathparse{\s+1};
\foreach \en in {#1,#2,#3,#4} {%
  \node at ({360/\n * (\s - 1)}:\radius) {$\en$};
   \draw[>=latex] ({360/\n * (\s - 1)+\margin}:\radius) 
     arc ({360/\n * (\s - 1)+\margin}:{360/\n * (\s)-\margin}:\radius);
  \pgfmathparse{\s+1}
      \xdef\s{\pgfmathresult}
}
}
\newcommand\circleU[3]{
\def \n {3}
\def \radius {.5cm}
\def \margin {20}
\edef\s{0}
 \pgfmathparse{\s+1};
\foreach \en in {#1,#2,#3} {%
  \node at ({360/\n * (\s - 1)}:\radius) {$\en$};
   \draw[>=latex] ({360/\n * (\s - 1)+\margin}:\radius) 
     arc ({360/\n * (\s - 1)+\margin}:{360/\n * (\s)-\margin}:\radius);
  \pgfmathparse{\s+1}
      \xdef\s{\pgfmathresult}
}
}
\newcommand{\Ac}{\mathcal{A}}
\newcommand{\I}{\mathfrak{S}}
\begin{document}

\title[A compact formula for the symmetric Macdonald polynomials]{Probabilistic operators for non-attacking tableaux and a compact formula for the symmetric Macdonald polynomials}

%    author information
\author{Olya Mandelshtam}
\address{University of Waterloo, Waterloo, Ontario, Canada}
\email{omandels@uwaterloo.ca}

\date{May 25, 2025}

\begin{abstract}

 We prove a new tableaux formula for the symmetric Macdonald polynomials $P_{\lambda}(X;q,t)$ that has considerably fewer terms and simpler weights than previously existing formulas. Our formula is a sum over certain sorted non-attacking tableaux, weighted by the \emph{queue inversion} statistic $\quinv$. The $\quinv$ statistic originates from a formula for the modified Macdonald polynomials $\widetilde{H}_{\lambda}(X;q,t)$ due to Ayyer, Martin, and the author (2022), and is naturally related to the dynamics of the asymmetric simple exclusion process (ASEP) on a circle. 

We prove our results by introducing probabilistic operators that act on non-attacking tableaux to generate a set of tableaux whose weighted sum equals $P_{\lambda}(X;q,t)$. These operators are a modification of the \emph{inversion flip operators} of Loehr and Niese (2012), which yield an involution on tableaux that preserves the major index statistic, but fails to preserve the non-attacking condition. Our tableaux are in bijection with the multiline queues introduced by Martin (2020), allowing us to derive an alternative multiline queue formula for $P_{\lambda}(X;q,t)$. Finally, our formula recovers an alternative formula for the Jack polynomials $J_{\lambda}(X;\alpha)$ due to Knop and Sahi (1996) using the same queue inversion statistic.

\end{abstract}

\maketitle

\section{Introduction}
 Let $X=\{x_1,x_2,\cdots\}$ be an infinite set of indeterminates, and for $n>0$, let $X_n=\{x_1,\ldots,x_n\}$ denote the truncation to the first $n$ variables. The \emph{symmetric Macdonald polynomials} $P_{\lambda}(X;q,t)$, introduced by Macdonald \cite{Mac88}, form a family of multivariate orthogonal polynomials in $X$, indexed by partitions, with coefficients that are rational functions of $q$ and $t$. The \emph{modified Macdonald polynomials} $\HH_{\lambda}(X;q,t)$ were introduced by Garcia and Haiman \cite{GarsiaHaiman96} as a transformed version of the polynomials $P_{\lambda}$ with coefficients in $\NN[q,t]$. The exploration of combinatorial formulas for the $P_{\lambda}$'s, the $\HH_{\lambda}$'s, and associated functions has been an active area of study since their inception, and has led to various formulas for these polynomials through a variety of combinatorial objects. Most famously, in 2004, an elegant tableaux formula for $\HH_{\lambda}$ in terms of a certain \emph{inversion statistic} $\inv$, was conjectured by Haglund, and subsequently proved by Haglund, Haiman, and Loehr \cite{HHL05}. This has come to be known as the HHL formula: \begin{equation}\label{eq:H inv}
\HH_{\lambda}(X;q,t)=\sum_{\sigma:\dg(\lambda)\rightarrow \Z^+}x^{\sigma}q^{\maj(\sigma)}t^{\inv(\sigma)}.
\end{equation} 
Using a technique called \emph{superization}, \eqref{eq:H inv} leads to a formula \eqref{eq:P inv} for $P_{\lambda}$ with the (co)inversion statistic on certain \emph{non-attacking fillings}. However, this formula has two drawbacks: it has a cumbersome pre-factor, and many terms in the sum collapse to a single term in the polynomial, making the computation highly inefficient when $\lambda$ is not a strict partition. 

Recent studies exploring the role of Macdonald polynomials in statistical mechanics models have led to several new combinatorial formulas. In \cite{CGW-2015}, motivated by connections with the \emph{asymmetric simple exclusion process (ASEP)}, Cantini, de Gier, and Wheeler found that 
\[P_{\lambda}(X_n;q,t)=\sum_{\alpha} f_{\alpha}(X_n;q,t),
\]
where the sum is over all $\alpha$'s of length $n$ that sort to $\lambda$, and the $f_{\alpha}$'s are known as the \emph{ASEP polynomials}. These polynomials specialize to the stationary probabilities of the ASEP, with $P_{\lambda}(X_n;q,t)$ reducing to the partition function of the ASEP of type $\lambda$ on $n$ sites when $x_1=\cdots=x_n=q=1$. In \cite{CMW18}, it was shown by Corteel, Williams, and the author that the functions $f_{\alpha}$ coincide with certain permuted basement Macdonald polynomials studied in \cite{Ale16, Fer11} that generalize the nonsymmetric Macdonald polynomials. Building upon Martin's \emph{multiline queue} formula for computing the stationary probabilities of the ASEP \cite{martin-2020},  \cite{CMW18} described a related formula for these polynomials in terms of (slightly modified) multiline queues and a \emph{queue inversion statistic}. This led to the first ``efficient'' formula for $P_{\lambda}(X;q,t)$ using statistics with a probabilistic interpretation derived from the ASEP dynamics. However, multiline queues cannot be naturally represented as tableaux in a way that preserves both their statistics and their direct connection to modified Macdonald polynomials \cite[Section 5]{CMW18}. 

On the other hand, expressing $P_{\lambda}$ as a sum over certain nonsymmetric Macdonald polynomials yielded an efficient tableaux formula in terms of the generalized HHL statistics defined in \cite{HHL08} for composition shapes. However, those generalized HHL statistics do not naturally correspond to multiline queue statistics, thereby losing the explicit connection with ASEP dynamics. Furthermore, it is not currently known how to use these generalized statistics to recover the plethystic relationship between $P_{\lambda}$ and $\HH_{\lambda}$.

The interpretation of the multiline queue statistics on tableaux inspired the authors of \cite{CHMMW20} to conjecture a new tableaux formula for $\HH_{\lambda}$ using the queue inversion statistic $\quinv$:
\begin{equation}\label{eq:H quinv}
\HH_{\lambda}(X;q,t)=\sum_{\sigma\in\dg(\lambda)}x^{\sigma}q^{\maj(\sigma)}t^{\quinv(\sigma)}.
\end{equation}
This formula was subsequently proved by Ayyer, Martin, and the author in \cite{AMM20}. Furthermore, these authors discovered that the modified Macdonald polynomial similarly decomposes into nonsymmetric components which specialize at $q=1$ to the stationary probabilities of the \emph{multispecies totally asymmetric zero range process (TAZRP)} \cite{AMM22}, with $\HH_{\lambda}(X_n;q,t)$ specializing at $q=1$ to the partition function of the TAZRP of type $\lambda$ on $n$ sites. 

In this article, we prove the following compact tableaux formula for $P_{\lambda}$ in terms of the $\quinv$ statistic on $\coquinv$-\emph{sorted non-attacking tableaux}, which are defined in \cref{sec:background}. This formula results from compressing the formula \eqref{eq:P quinv} obtained in \cite{Man23}, and was conjectured therein. 
\begin{theorem}\label{thm:main}
The symmetric Macdonald polynomial $P_{\lambda}(X;q,t)$ is given by
\begin{equation}\label{eq:main}
P_{\lambda}(X;q,t) =  \sum_{\substack{\sigma:\dg(\lambda)\rightarrow\Z^+\\\sigma\,\qnona\\
\sigma\,\cqsort}}q^{\maj(\sigma)}t^{\coquinv(\sigma)}x^{\sigma}\prod_{\substack{u\in\dg(\lambda)\\\sigma(u)\neq\sigma(\South(u))}}\frac{1-t}{1-q^{\leg(u)+1}t^{\rarm(u)+1}}.
\end{equation}
\end{theorem}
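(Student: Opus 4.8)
The plan is to derive \eqref{eq:main} from the non-compact tableaux formula \eqref{eq:P quinv} of \cite{Man23} by a \emph{compression} argument. The $\qnona$ tableaux indexing \eqref{eq:P quinv} will be grouped into classes, each containing exactly one $\cqsort$ tableau, and the weighted sum over a class will be shown to collapse to the single term $q^{\maj(\sigma)}t^{\coquinv(\sigma)}x^{\sigma}\prod_{u:\sigma(u)\neq\sigma(\South(u))}\frac{1-t}{1-q^{\leg(u)+1}t^{\rarm(u)+1}}$ indexed by the $\cqsort$ representative $\sigma$ of that class. Summing these collapsed terms over all $\cqsort$ tableaux of shape $\lambda$ then produces \eqref{eq:main}.

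To organize the classes and evaluate the class sums, I would introduce a family of \emph{probabilistic operators} on $\qnona$ tableaux, built from local moves on individual columns (or on consecutive equal-length rows) in the spirit of the inversion-flip operators of Loehr and Niese. Their flip is a deterministic, $\maj$-preserving involution that can leave the non-attacking class; the modification replaces it by a branching operator sending a tableau $\sigma$ to a formal combination $\sum_i p_i\,\sigma_i$, where each $\sigma_i$ is obtained by reshuffling the entries involved at a column, the coefficients $p_i$ are rational functions in $q$ and $t$ summing to $1$, and --- crucially --- every $\sigma_i$ is again $\qnona$. The $p_i$ must be chosen so that $\maj$ and the content multiset (hence $x^{\sigma}$) are preserved, and so that, after summing the geometric series governed by the arm length $\rarm(u)$ and leg length $\leg(u)$ at the cell being unsorted, the branch contributes exactly the factor $\frac{1-t}{1-q^{\leg(u)+1}t^{\rarm(u)+1}}$ to the class identity. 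The heart of this step is a case analysis on the order types of the queue triples $\bigl(\sigma(u),\sigma(\South(u)),\sigma(v)\bigr)$ (with $v$ ranging over the arm of $u$), which governs both the repair of the non-attacking defect --- the point at which the unmodified Loehr--Niese flip fails --- and the precise form of the local branch sum.

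With the operators in hand, I would show that the local moves generate an equivalence relation on $\qnona$ tableaux whose classes are exactly the fibers of the natural ``$\cqsort$-ification'' map (so that each class contains its $\cqsort$ representative exactly once), and that the total coefficient accumulated along the branching from the representative to the rest of its class sums to $1$. Multiplying the local identities column by column then gives, for each $\cqsort$ tableau $\sigma$, the class identity $\sum_{\sigma'} q^{\maj(\sigma')}t^{\coquinv(\sigma')}x^{\sigma'} = q^{\maj(\sigma)}t^{\coquinv(\sigma)}x^{\sigma}\prod_{u:\sigma(u)\neq\sigma(\South(u))}\frac{1-t}{1-q^{\leg(u)+1}t^{\rarm(u)+1}}$, the sum running over the tableaux $\sigma'$ in the class of $\sigma$. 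Summing this over all $\cqsort$ tableaux of shape $\lambda$ and substituting \eqref{eq:P quinv} completes the proof. As a cross-check on the operators and the class sums, I would transport the whole construction through the bijection with Martin's multiline queues \cite{martin-2020}, under which the probabilistic operators should become recognizable ASEP-type dynamics.

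The step I expect to be the main obstacle is the construction of the probabilistic operators and the verification of their defining properties: finding coefficients $p_i$ that simultaneously keep every image $\qnona$ (the point where the unmodified Loehr--Niese flip fails), preserve $\maj$ and the $x$-weight exactly, and sum --- after the geometric series --- to the specific factor $\frac{1-t}{1-q^{\leg(u)+1}t^{\rarm(u)+1}}$ rather than to some other rational function with the same specializations. Tied to this is the confluence analysis needed to guarantee clean classes (no tableau reached twice, every $\cqsort$ tableau in a unique class) and the careful bookkeeping of the modified arm $\rarm$ appearing in \eqref{eq:main} against the arm-type quantity in \eqref{eq:P quinv}.
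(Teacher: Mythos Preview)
Your high-level strategy matches the paper's: introduce probabilistic modifications of the Loehr--Niese flips that stay within $\Tab(\lambda)$ and use them to compress \eqref{eq:P quinv}. But the mechanism you describe has a genuine gap.

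The product $\prod_u\frac{1-t}{1-q^{\leg(u)+1}t^{\rarm(u)+1}}$ is \emph{already present} in every term of \eqref{eq:P quinv}; it is not manufactured by the compression, and there is no geometric series to sum. What must be eliminated is the prefactor $\perm_\lambda(t)^{-1}$. Your ``class identity'' is therefore ill-posed: its left side is a polynomial in $q,t$ (a finite sum of monomials $q^{\maj}t^{\coquinv}$), while its right side is a genuine rational function. The identity the paper actually proves involves the \emph{full} weight $\wt(\sigma)$ (monomial times product factor) on both sides: for each non-attacking $\sigma$ with top border $w$,
\[
\wt(\sigma)\;=\;t^{\ell_\lambda(w)}\sum_{\sigma'\ \cqsort}\wt(\sigma')\,\prob(\sigma',\sigma),
\]
and the factor $\perm_\lambda(t)$ then appears, after summing over all $\sigma$, from $\sum_{w\in\Sym_\lambda(v)}t^{\ell_\lambda(w)}=\perm_\lambda(t)$, cancelling the prefactor in \eqref{eq:P quinv}.

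This also means the structure is \emph{not} a partition into equivalence classes. A fixed unsorted $\sigma$ is reached from \emph{several} $\cqsort$ tableaux with different probabilities, so ``each class contains exactly one $\cqsort$ tableau'' and the confluence analysis you outline are the wrong framing; disjointness is replaced by the bookkeeping of probabilities. The operators act on adjacent pairs of equal-height columns (indexed by $\lambda$-compatible $i$), not on single columns, and the key technical step is the detailed-balance relation $\wt(\sigma')\prob(\sigma',\sigma)=t\,\wt(\sigma)\prob(\sigma,\sigma')$ on the full weights (\cref{prop:balance}), established by a row-by-row comparison of the contributions $\wt^{(r)}$ to $\maj$, $\coquinv$, and the arm--leg factors simultaneously.
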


We show that the $\coquinv$-sorted tableaux with the statistic $\quinv$ in \eqref{eq:main} are in bijection with the multiline queues of Martin \cite{martin-2020}. As a consequence of this bijection, we obtain an alternative multiline queue formula for $P_{\lambda}$ through Martin's multiline queues. Moreover, our formula can be split into nonsymmetric components which specialize at $x_1=\cdots=x_n=q=1$ to probabilities of the ASEP, and which we conjecture are the ASEP polynomials. 

Our objective in this paper is to show that the $\quinv$ and $\widehat{\texttt{arm}}$ statistics in \eqref{eq:main} are natural statistics for a tableaux formula for $P_{\lambda}$, as they exhibit the relationship of the classical and modified Macdonald polynomials to the ASEP and TAZRP, respectively. Moreover, the two formulas are directly connected through a combinatorial interpretation of the plethystic substitution that transforms $P_{\lambda}$ into $\HH_{\lambda}$. These relationships are shown in the diagram below.
\medskip

\hspace{1in}
\begin{tikzcd}
     \HH_{\lambda}(X;q,t) \arrow[<-,rrr,"\text{plethystic substitution}"] \arrow[<-,dr] &\qquad\qquad&\qquad\qquad&P_{\lambda}(X;q,t)\arrow[<-,dl]\arrow[ddd, "X=q=1"]\\
& \eqref{eq:H quinv}    \arrow[->,r,"\text{superization}"] \arrow[->,ddl,"q=1"] & \eqref{eq:P quinv}\arrow[d,"\text{compression}"]\\
&& \eqref{eq:main}\arrow[dr]\\
     \text{TAZRP}\arrow[<-,uuu, "q=1"] &&&\text{ASEP}\arrow[<-,ul,"X=q=1"] 
\end{tikzcd}
\medskip

This article is organized as follows. \cref{sec:background} contains the necessary background on formulas for $\HH_{\lambda}(X;q,t)$ and $P_{\lambda}(X;q;t)$ in terms of the $\inv$ and $\quinv$ statistics from \cite{HHL05,HHL08} and \cite{AMM20,Man23}, respectively. In \cref{sec:operators}, we define the probabilistic entry-swapping operators on $\quinv$-non-attacking fillings which we use to prove \cref{thm:main}, with a $\coinv$ analog in \cref{sec:inv operators}. In \cref{sec:martin}, we discusses the map to Martin's multiline queues in \cref{sec:martin}, where we also conjecture a new tableaux formula for the ASEP polynomials $f_\alpha(X_n;q,t)$. Finally, we derive an alternative formula for Jack polynomials using the $\quinv$ statistic in \cref{sec:jack}.

\section{Background}\label{sec:background}

\subsection{Combinatorial statistics and tableaux}
A \emph{composition} is a sequence $\alpha=(\alpha_1,\dots, \alpha_k)$, of $k$ non-negative integers, called its \emph{parts}. The number of nonzero parts is given by $\ell(\alpha)$, and $|\alpha|=\sum_i\alpha_i$ is the sum of the parts. Denote by $\sort(\alpha)$ and $\inc(\alpha)$ the compositions obtained by rearranging the parts of $\alpha$ in weakly decreasing and increasing order, respectively. If $\alpha$ is a weakly decreasing composition, we call it a \emph{partition}. For a partition $\lambda$, define the conjugate partition $\lambda'=(\lambda_1',\ldots,\lambda_{\ell(\lambda)}')$ by $\lambda'_j=|\{i:\lambda_i\geq j\}|$, and define $n(\lambda)=\sum_i {\lambda_i'\choose 2}$. We may equivalently use the \emph{frequency notation} to describe a partition $\lambda$ as a multiplicity vector. Let $m_i(\lambda)$ be the number of parts of size $i$ in $\lambda$; then we may write $\lambda$ as $\langle 1^{m_1(\lambda)}2^{m_2(\lambda)}\ldots\rangle$. If each part of $\lambda$ appears at most once, we call $\lambda$ a \emph{strict partition}.

\begin{example}
   For example, the composition $\alpha=(3,1,4,3,3,1)$ rearranges to $\inc(\alpha)=(1,1,3,3,3,4)$ and $\lambda=\sort(\alpha)=(4,3,3,3,1,1)$. Then $\lambda'=(6,4,4,1)$ with $n(\lambda)=27$, and $\lambda$ has frequency notation $\langle 1^22^03^34^1\rangle$.
\end{example}

Given a partition or composition $\alpha=(\alpha_1,\dots, \alpha_n)$, its \emph{diagram} $\dg(\alpha)$ is a sequence of bottom justified columns with $\alpha_i$ cells in the $i$'th column. In this article we will only be considering diagrams for partitions (although the leftmost and rightmost diagrams in \cref{fig:leg} are composition diagrams). Note that in this article, $\dg(\lambda)$ corresponds to the Ferrers diagram of the conjugate partition $\lambda'$.
  
The columns of a partition or composition diagram $\dg(\alpha)$ are labeled from left to right, and the rows from bottom to top. The notation $u=(r,c)$ refers to the cell in row $r$ and column $c$ (opposite of the convention of Cartesian coordinates). We denote by $\South(u)=\South(r,c)$ the cell $(r-1,c)$ directly below $u$, if it exists; if $r=1$ and $\South(u)$ doesn't exist, we set the convention $\South(u)=\infty$. The \emph{leg} of a cell $u=(r,j)\in\dg(\alpha)$ is denoted $\leg(u)=\alpha_j-r$ and is equal to the number of cells above in the same column as $u$. See \cref{fig:leg} for an example.

\begin{figure}[h]
  \centering
\begin{tikzpicture}[scale=.5]
\node at (0,0) {\tableau{&&\ \\&\ &\ &&\ \\\ &\ &u&&\ &\ \\\ &\ &\ &\ &\ &\ }};
\node at (8,0) {\tableau{\ \\\ &\ &\ \\\ &\ &u&\ &\ \\\ &\ &\ &\ &\ &\ }};
\node at (15,0) {\tableau{&&&&&\ \\&&&\ &\ &\ \\&\ &u&\ &\ &\ \\\ &\ &\ &\ &\ &\ }};

\end{tikzpicture} 
\caption{We show the diagrams of the compositions $\alpha=(2,3,4,1,3,2)$, $\lambda(\alpha)$, and $\inc(\alpha)$. The cell $u=(2,3)$ has $\leg(u)$ equal to $2, 1, 0$ in each diagram, respectively. }
\label{fig:leg}
\end{figure}
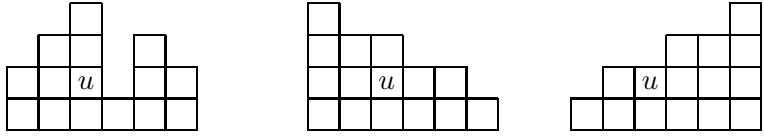

A \emph{filling} $\sigma:\dg(\alpha) \to \Z^+$ is an assignment of positive integers to the cells of $\dg(\alpha)$. For a cell $u\in\dg(\alpha)$, $\sigma(u)$ denotes the entry in the cell $u$. The content of a filling $\sigma$ is recorded as a monomial in the variables $X=x_1,x_2,\ldots$, and is denoted by $x^{\sigma} := \prod_{u\in \dg(\lambda)} x_{\sigma(u)}$. Define the \emph{major index} of a filling $\sigma$, denoted $\maj(\sigma)$, by
\[
\maj(\sigma) = \sum_{\substack{u\in \dg(\lambda)\\\sigma(u)>\sigma(\South(u))}} (\leg(u)+1).
\]

We shall also define two types of \emph{non-attacking fillings}.
\begin{defn}\label{def:nonattacking}
For two cells $x=(r_1,i),y=(r_2,j)\in\dg(\lambda)$ with $i<j$, we say they are $\inv$-\emph{attacking} if $r_1=r_2$ or $r_2=r_1+1$. We say $x$ and $y$ are $\quinv$-\emph{attacking} if $r_1=r_2$ or if $i<j$ and $r_2=r_1-1$. The configurations comparing the two definitions are shown below: 
\begin{center}
\begin{tikzpicture}[scale=0.5]
\def\h{15};
\node at (0,1.7) {$\inv$-attacking};
\node at (-3,0) {$\raisebox{-5pt}{\tableau{\ }}\cdots\raisebox{-5pt}{\tableau{\ }}$};
\node at (0,0) {or};
\node at (3,.2) {$\raisebox{-5pt}{\tableau{\\\ }}\cdots\raisebox{-5pt}{\tableau{\ \\}}$};
\node at (-4,-1) {\tiny$i$};
\node at (-2,-1) {\tiny$j$};
\node at (2,-1) {\tiny$i$};
\node at (4,-1) {\tiny$j$};
\node at (\h,1.7) {$\quinv$-attacking};
\node at (-3+\h,0) {$\raisebox{-5pt}{\tableau{\ }}\cdots\raisebox{-5pt}{\tableau{\ }}$};
\node at (0+\h,0) {or};
\node at (3+\h,.2) {$\raisebox{10pt}{\tableau{\ \\}}\cdots\raisebox{10pt}{\tableau{\\\ }}$};
\node at (-4+\h,-1) {\tiny$i$};
\node at (-2+\h,-1) {\tiny$j$};
\node at (2+\h,-1) {\tiny$i$};
\node at (4+\h,-1) {\tiny$j$};
\end{tikzpicture}
\end{center}
A filling of $\dg(\lambda)$ is $\inv$-\emph{non-attacking} if no pair of $\inv$-attacking cells contains the same entry.  Similarly, a filling of $\dg(\lambda)$ is $\quinv$-\emph{non-attacking} if no pair of $\quinv$-attacking cells contains the same entry. 
\end{defn}

\begin{remark}
For a filling of partition shape, the $\quinv$-non-attacking condition is more restrictive than the $\inv$-non-attacking condition (there are more $\quinv$-attacking pairs). Thus a formula over $\quinv$-non-attacking tableaux will have fewer terms than one over $\inv$-non-attacking tableaux.
\end{remark}

\subsection{Definitions of $P_{\lambda}$, $J_{\lambda}$, and  $\HH_{\lambda}$}
For $X=x_1,x_2,\ldots$ a set of indeterminates, let $\Sym$ be the ring of symmetric functions in $X$ with coefficients in $\mathbb{Q}(q,t)$. The Macdonald polynomial $P_{\lambda}(X;q,t)$ is characterized as the unique polynomial in $\Sym$ that is orthogonal with respect to the $q,t$-generalization of the Hall inner product, and can be written as
\[
P_{\lambda}(X;q,t)=m_{\lambda}(X)+\sum_{\mu< \lambda} b_{\mu,\lambda}(q,t)m_{\mu}(X),
\]
where the usual dominance order on partitions is used: $\mu\leq \lambda$ if and only if $\mu_1+\cdots+\mu_k\leq \lambda_1+\cdots+\lambda_k$ for all $k\geq 1$. When $P_{\lambda}(X;q,t)$ is scaled by a certain product of linear terms, one obtains the \emph{integral form} $J_{\lambda}(X;q,t)$, whose coefficients are in $\Z[q,t]$. Define
\begin{equation}
\PR_{\lambda}(q,t)=\prod_{u\in\dg(\lambda)}(1-q^{\leg(u)}t^{\arm(u)+1}),\qquad \widetilde{\PR}_{\lambda}(q,t)=\prod_{u\in\overline{\dg}(\lambda)}(1-q^{\leg(u)+1}t^{\arm(u)+1})
\end{equation}
where $\overline{\dg}(\lambda)$ is the set of cells in $\dg(\lambda)$ not in the bottom row. The integral form is then defined as $J_{\lambda}(X;q,t)=\PR_{\lambda}(q,t) P_{\lambda}(X;q,t)$.
 
In \cite{GarsiaHaiman96}, Garcia and Haiman defined the modified Macdonald polynomials by plethystic substitution into $J_{\lambda}(X;q,t)$: 
\begin{equation}\label{eq:HJ}
\HH_{\lambda}(X;q,t)=t^{n(\lambda)}J_{\lambda}\Big[\frac{X}{1-t^{-1}};q,t^{-1}\Big].
\end{equation}
We postpone the definition of plethystic substitution until \cref{sec:plethysm}, where the plethystic relationship in \cref{eq:HJ} is interpreted combinatorially through \emph{superization}. This interpretation was used in \cite{HHL05} to derive the following formula for $P_{\lambda}$ in terms of $\inv$-non-attacking fillings: 
\begin{equation}\label{eq:P inv}
P_{\lambda}(X;q,t)=\frac{\widetilde{\PR}_{\lambda}(q,t)}{\PR_{\lambda}(q,t)}\hspace{-0.2in}\sum_{\substack{\sigma:\dg(\lambda)\rightarrow\Z^+\\\sigma\,\inona}}x^{\sigma}q^{\maj(\sigma)}t^{\coinv(\sigma)}\prod_{\substack{u\in\dg(\lambda)\\\sigma(u)\neq\sigma(\South(u))}}\frac{1-t}{1-q^{\leg(u)+1}t^{\arm(u)+1}}. 
\end{equation}
For a discussion of the pre-factor $\widetilde{\PR}_{\lambda}(q,t)/\PR_{\lambda}(q,t)$, see \cite[Section 4]{CHMMW20}.

Although the statistics $\maj$ and $\coinv$ appearing in the above formula are well-behaved, the formula itself is unsatisfying because many of the tableaux appearing in the sum compress to a single term in the polynomial, making the computation quite inefficient for certain $\lambda$'s. For example, when $\lambda=(1^k)$, $P_{\lambda}=m_{1^k}$, and the coefficient $[x^{\lambda}]P_{\lambda}(X;q,t)=1$, whereas in the formula above, there are $k!$ tableaux contributing to the coefficient of $x^{\lambda}$.

The multiline queue formula of \cite{CMW18} was a significant improvement over \eqref{eq:P inv} in the sense of having fewer terms; however, translating it into the tableaux setting resulted in having to define complicated statistics, making such a tableaux formula impractical. On the other hand, in \cite[Equation 4.10]{CHMMW20}, a tableaux formula was given in terms of sorted non-attacking fillings of $\dg(\inc(\lambda))$ and the generalized statistics $\coinv'$ and $\arm'$ from \cite{HHL05}. This formula was based on the fact that $P_{\lambda}$ can be written as a sum over certain \emph{permuted basement Macdonald polynomials}. However, the statistics $\coinv'$ and $\arm'$ are more complicated than the statistics $\arm$ and $\inv$ in \eqref{eq:P inv}, and don't correspond directly to statistics on multiline queues.

\subsection{Two formulas for $\HH_{\lambda}$}
In this section, we describe the formulas \eqref{eq:H inv} due to \cite{HHL05} and \eqref{eq:H quinv} due to \cite{AMM20} for $\HH_{\lambda}(X;q,t)$.

Define the function $\Qc:\NN^3\rightarrow \{0,1\}$ by $\Qc(a,b,c)=0$ if the entries $a,b,c$ satisfy one of the following inequalities and $\Qc(a,b,c)=1$ otherwise: 
\begin{equation}\label{eq:ineq}
a\leq b<c\quad\text{or}\quad c<a\leq b \quad\text{or}\quad b<c<a.
\end{equation}

\subsubsection{HHL statistics} For a cell $u=(r,k)\in\dg(\lambda)$, its \emph{arm} denoted $\Arm(u)$ is the set of cells in its row to its right, and $\arm(u)=|\Arm(u)|=\lambda'_r-k$.  Below, the shaded cells correspond to $\Arm(u)$:
\[
\scalebox{1}{\begin{tikzpicture}[scale=.4]
\draw (-1,0)--(6,0)--(6,1)--(5,1)--(5,2)--(3,2)--(3,3)--(2,3)--(2,4)--(-1,4)--(-1,0);
\draw (0,2)--(0,3)--(1,3)--(1,2)--(0,2);
\node at (.5,2.5) {$u$};
\filldraw[black, fill=black!30] (1,3)--(3,3)--(3,2)--(1,2)--(1,3);
\end{tikzpicture}}
\]
A $\Gamma$-triple is a triple of cells $x=(r,i), y=(r-1,i), z=(r,j)$ where $j>i$, so that $z\in\Arm(x)$. When $r=1$, it is called a \emph{degenerate $\Gamma$-triple}. A $\Gamma$-triple is an \emph{inversion} in a filling $\sigma$ of $\dg(\lambda)$ if the entries $a=\sigma(x)$, $b=\sigma(y)$, $c=\sigma(z)$ in the configuration below are cyclically increasing when read counterclockwise. Ties are broken with respect to top-to-bottom, left-to-right reading order on the entries. By convention, $\sigma(y)=\infty$ in a degenerate triple:
\begin{center}
\raisebox{-10pt}{\HHLtrip{$a$}{$b$}{$c$}} \qquad or \qquad \raisebox{-10pt}{\begin{tikzpicture}[scale=0.5]
\node at (-0.5,-1.5) {$\infty$};
 \cell{1}{0}{$a$} \cell{1}{2.7}{$c$}
\node at (1,-0.5) {$\cdots$};
\end{tikzpicture}} \quad (degenerate).
\end{center}
Equivalently, a $\Gamma$-triple with contents $a,b,c$ in the configuration above is an inversion if $\Qc(a,b,c)=0$, and a \emph{coinversion} if $\Qc(a,b,c)=1$. The total number of inversion triples in $\sigma$ is denoted by $\inv(\sigma)$, and the number of coinversion triples is $\coinv(\sigma)=n(\lambda)-\inv(\sigma)$. 

\subsubsection{Queue statistics} For a cell $u=(r,k)\in\dg(\lambda)$, its arm $\Rarm(u)$ is the set of cells in the \emph{row below} strictly to its right, and $\rarm(u)=|\Rarm(u)|=\lambda'_{r-1}-k$.  An $L$-triple is a triple of cells $x=(r,i)$, $y=(r-1,i)$, $z=(r-1,j)$ where $j>i$, so that $z\in\Rarm(x)$. Below, the shaded cells correspond to $\Rarm(u)$:
\[
\scalebox{1}{\begin{tikzpicture}[scale=.4]
\draw (-1,0)--(6,0)--(6,1)--(5,1)--(5,2)--(3,2)--(3,3)--(2,3)--(2,4)--(-1,4)--(-1,0);
\draw (0,2)--(0,3)--(1,3)--(1,2)--(0,2);
\node at (.5,2.5) {$u$};
\filldraw[black, fill=black!30] (1,2)--(5,2)--(5,1)--(1,1)--(1,2);
\end{tikzpicture}}
\]
When $r=\lambda_k$, it is called a \emph{degenerate $L$-triple}. An $L$-triple is a \emph{queue inversion} (or $\quinv$) in a filling $\sigma$ if the entries $a=\sigma(x)$, $b=\sigma(y)$, $c=\sigma(z)$ in the configuration below are cyclically increasing when read counterclockwise. Ties are broken with respect to top-to-bottom, \emph{right-to-left} reading order on the entries. By convention $\sigma(x)=0$ in a degenerate triple: 
\begin{center}
\raisebox{-10pt}{\qtrip{$a$}{$b$}{$c$}}, \qquad or \qquad \raisebox{-10pt}{\begin{tikzpicture}[scale=0.5]
\node at (-0.5,-.5) {$0$};
 \cell{2}{0}{$b$} \cell{2}{2.7}{$c$}
\node at (1,-1.5) {$\cdots$};
\end{tikzpicture}} \quad (degenerate).
\end{center}
Equivalently, an $L$-triple with contents $a,b,c$ in the configuration above is a $\quinv$ triple if $\Qc(a,b,c)=0$, and if $\Qc(a,b,c)=1$ we call it a $\coquinv$ triple. The total number of $\quinv$ triples in $\sigma$ is denoted by $\quinv(\sigma)$, and the number of $\coquinv$ triples is $\coquinv(\sigma)=n(\lambda)-\quinv(\sigma)$. 

With these statistics, $\HH_{\lambda}(X;q,t)$ can be written as a sum over fillings $\sigma:\dg(\lambda)\rightarrow\Z^+$ either in terms of the $\inv$ statistic in \eqref{eq:H inv} or in terms of the $\quinv$ statistic in \eqref{eq:H quinv}. 

\begin{remark}
Recently, an interesting bijection relating the statistics $\inv$ and $\quinv$ was found in \cite{JL24}. 
\end{remark}

\begin{example}\label{ex:stats}
In the tableau $\sigma$ below, the cell $u=(2,2)$ has $\leg(u)=1$, $\arm(u)=3$ and $\rarm(u)=7$. $\maj(\sigma)=5$, $\inv(\sigma)=6$, and $\quinv(\sigma)=14$. Thus this filling contributes $x_1^5x_2^3x_3^6x_4x_5q^5t^{6}$ to \eqref{eq:H inv} and $x_1^5x_2^3x_3^6x_4x_5q^5t^{14}$ to \eqref{eq:H quinv}. 
\[
\sigma=\raisebox{10pt}{\tableau{3&2\\1&3&3&1&3\\1&1&2&1&2&4&4&3&3}}
\]
\end{example}

\subsection{Formulas for $P_{\lambda}$ and $J_{\lambda}$ from $\HH_{\lambda}$ via superization}\label{sec:plethysm}
We recall briefly the definition of plethystic substitution (see \cite[Chapter 1]{Hag08} for a full treatment). Let $X=x_1,x_2,\ldots$ and $Y=y_1,y_2,\ldots$ be two alphabets. For a formal power series or polynomial $A\in\Sym$ in the indeterminates $X$, define the \emph{plethystic substitution} of $A$ into the power sum symmetric function $p_k=\sum_j x_j^k$ by $p_k[A]=A(x_1^k,x_2^k,\ldots)$. For a symmetric function $f$, extend the definition of plethystic substitution by expanding $f$ in the power sum basis and substituting $p_k[A]$ for $p_k(A)$ to obtain $f[A]$.   By convention, when $X$ appears inside brackets, it is treated as a formal power series as $f[X]=f[x_1+x_2+\cdots]$, so that $f[X]=f(X)$ and $f[X+Y]=f(X,Y)$. We will also use the notation $f[\varepsilon X]=f(-X)=f(-x_1,-x_2,\ldots)$.

We recall the classical notion of \emph{superization} of a symmetric function $f\in\Sym$ (see, for instance, \cite{HHLRU} for details). Define the \emph{super-alphabet} $\Ac=\Z^+\cup\Z^-=\{1,\bar1,2,\bar2,\ldots\}$, where the elements of the usual alphabet $\Z^+=\{1,2,\ldots\}$ are referred to as the ``positive'' letters, and those of $\Z^-=\{\bar1,\bar2,\ldots\}$ as the ``negative'' letters. The total ordering on $\Ac\cup \{0,\infty\}$ is chosen to be:
\[
0<1<\bar1<2<\bar2<\cdots<\infty.
\]
When standardizing a word in the alphabet $\Ac$, ties between two equal letters are broken by saying the leftmost one is smaller if they are positive, and the leftmost one is larger if they are negative. To make this precise, we introduce the notation $I(a,b)$ for any $a,b\in\Ac\cup \{0,\infty\}$ to generalize the notion of a descent on $\Ac\cup \{0,\infty\}$ with respect to the total ordering above:
\[
I(a,b)=\begin{cases} 1,&a>b\ \text{or}\ a=b\in\Z^-,\\
0,&a<b\ \text{or}\ a=b\in\Z^+.
\end{cases}
\]
Note in particular that if $a\neq b$, then $I(a,b)=\delta_{a>b}$ (where $\delta_F$ is the truth function that outputs 1 if $F$ is true and 0 otherwise). Otherwise, we have $I(a,a)=0$ and $I(\bar a,\bar a)=1$ for any $a\in \Z^+$. For example, $I(1,2)=I(\bar 1,\bar 2)=I(2,\bar 2)=I(1,1)=I(1,\infty)=0$ and $I(2,1)=I(\bar 2,2)=I(\bar 2,\bar 2)=I(1,0)=1$.

\begin{defn}
A \emph{super filling} of a diagram $\dg(\lambda)$ with the super alphabet $\Ac$ with a fixed total ordering is a function $\sigma: \dg(\lambda)\rightarrow \Ac$. We denote by $|\sigma|$ the regular filling with the alphabet $\Z^+$ obtained by sending $\bar a$ in $\sigma$ to $a$ for each $a\in\Z^+$. For a super filling $\sigma$, define $p(\sigma)$ and $m(\sigma)$ to be the numbers of positive and negative entries in $\sigma$, respectively.
\end{defn}

The notation $I(a,b)$ allows us to extend the definitions of the major index, $\inv$, and $\quinv$ to super fillings. Let $\sigma:\dg(\lambda)\rightarrow \Ac$, and define  
\[
\maj(\sigma)=\sum_{\substack{u\in\dg(\lambda)\\I(\sigma(u),\sigma(\South(u))=1}} \leg(u)+1.
\]
For $a,b,c\in\Ac\cup\{0,\infty\}$, we say $\Qc(a,b,c)=0$ if and only if exactly \emph{one} of the following is true:
\[
\{I(a,b)=1, I(c,b)=0, I(a,c)=0\}.
\]
Note that it is impossible for all three to be true, so if more than one is true, $\Qc(a,b,c)=1$. When $\sigma=|\sigma|$, this definition coincides with the inequalities in \eqref{eq:ineq}, and thus characterizes $\inv(\sigma)$ and $\quinv(\sigma)$ when $\sigma$ is a super-filling.

\begin{defn}
Let $X$ and $Y$ be two alphabets. The \emph{superization} of a symmetric function $f(X)$ is \[\widetilde{f}(X,Y) := f[X-\varepsilon Y].\]
\end{defn}
In particular, we have  
\begin{equation}\label{eq:omega}
f[X(t-1);q,t]= f[tX-\varepsilon X;q,t]=\widetilde{f}(tX,\varepsilon X;q,t).
\end{equation}

Due to \cite[Proposition 4.3]{HHL05}, if 
\[
f(X;q,t)=\sum_{\sigma:\dg(\lambda)\rightarrow \Z^+}q^{\maj(\sigma)}t^{\iota(\sigma)}x^{\sigma},\] 
where $\iota(\sigma)$ can be either $\inv(\sigma)$ or $\quinv(\sigma)$, one obtains a formula for its superization $\widetilde{f}(X,Y)=f[X-\varepsilon Y]$ as a generating function over super fillings $\sigma:\dg(\lambda)\rightarrow \Ac$:
\begin{equation}\label{eq:super}
\widetilde{f}(X,Y;q,t)=\sum_{\sigma:\dg(\lambda)\rightarrow \Ac}q^{\maj(\sigma)}t^{\iota(\sigma)}z^{\sigma}
\end{equation}
where $z_i=x_i$ if $i\in\Z^+$ and $z_i=y_i$ if $i\in\Z^-$.

Rearranging \eqref{eq:HJ} for $J_{\lambda}(X;q,t)$, we get 
\begin{align}
J_{\lambda}(X;q,t)&=t^{n(\lambda)}\HH_{\lambda}[X(1-t);q,t^{-1}]=t^{n(\lambda)+n}\HH_{\lambda}[X(t^{-1}-1);q,t^{-1}]. \label{eq:J}
\end{align}

As detailed in \cite[Lemma 5.2]{HHL05} and \cite[Section 3]{AMM20}, \eqref{eq:super} and \eqref{eq:omega} imply, respectively, 
\begin{align}
\HH_{\lambda}[X(t-1);q,t] &= \sum_{\sigma:\dg(\lambda)\rightarrow \Ac}(-1)^{m(\sigma)}q^{\maj(\sigma)}t^{p(\sigma)+\inv(\sigma)}x^{|\sigma|},\label{eq:C}\\
&= \sum_{\sigma:\dg(\lambda)\rightarrow\Ac}(-1)^{m(\sigma)}q^{\maj(\sigma)}t^{p(\sigma)+\quinv(\sigma)}x^{|\sigma|}.\label{eq:C quinv}
\end{align}
These compress to formulas for $J_{\lambda}$ in terms of $\inv$- and $\quinv$-non-attacking super-fillings, respectively:
\begin{align}
J_{\lambda}(X;q,t) &= t^{n(\lambda)+n} \sum_{\substack{\sigma:\dg(\lambda)\rightarrow \Ac\\|\sigma| \inona}}(-1)^{m(\sigma)}q^{\maj(\sigma)}t^{-p(\sigma)-\inv(\sigma)}x^{|\sigma|}, \label{eq:H inv nonattacking}\\
&= t^{n(\lambda)+n} \sum_{\substack{\sigma:\dg(\lambda)\rightarrow \Ac\\|\sigma| \qnona}}(-1)^{m(\sigma)}q^{\maj(\sigma)}t^{-p(\sigma)-\quinv(\sigma)}x^{|\sigma|}. \label{eq:H quinv nonattacking}
\end{align}

In \cite[Section 8]{HHL05}, it is shown how to further compress \eqref{eq:H inv nonattacking} to obtain \eqref{eq:P inv} by grouping together and evaluating the total contribution of all super fillings that have the same absolute value to obtain the HHL tableaux formulas in terms of $\inv$-non-attacking tableaux. In \cite{Man23}, an analogous strategy is used to compress \eqref{eq:H quinv nonattacking} to obtain the following formulas in terms of $\quinv$-non-attacking tableaux.

\begin{theorem}[{\cite[Theorem 5.3]{Man23}}]
The symmetric Macdonald polynomial $P_{\lambda}(X;q,t)$ is given by
\begin{equation}\label{eq:P quinv}
P_{\lambda}(X;q,t) =  \frac{1}{\perm_{\lambda}(t)}\hspace{-0.2in}\sum_{\substack{\sigma:\dg(\lambda)\rightarrow\Z^+\\\sigma\,\qnona}}\hspace{-0.2in}q^{\maj(\sigma)}t^{\coquinv(\sigma)}
x^{\sigma}\hspace{-0.2in} \prod_{\substack{u,~\South(u)\in\dg(\lambda)\\\sigma(u)\neq\sigma(\South(u))}}\frac{1-t}{1-q^{\leg(u)+1}t^{\rarm(u)+1}}.
\end{equation}
\end{theorem}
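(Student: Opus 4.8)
The plan is to prove \eqref{eq:P quinv} by exactly mirroring the compression argument that \cite[Section 8]{HHL05} uses to pass from \eqref{eq:H inv nonattacking} to \eqref{eq:P inv}, but starting instead from the $\quinv$ version \eqref{eq:H quinv nonattacking}. The key structural fact is that in a $\quinv$-non-attacking super-filling, the entries that determine the monomial $x^{|\sigma|}$ are forced into a rigid ``column-block'' pattern: within any column of $\dg(\lambda)$, once we fix the underlying partition-shape filling $|\sigma|$, the only freedom left is a choice of bar (positive vs.\ negative) for each cell, subject to the constraint that equal entries in adjacent rows of the same column and equal entries related by $\quinv$-attacking positions behave consistently with the ordering $0<1<\bar1<2<\bar2<\cdots$. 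So the first step is to fix a $\quinv$-non-attacking \emph{ordinary} filling $\tau = |\sigma|$ and collect all super-fillings $\sigma$ with $|\sigma| = \tau$; these all contribute the same monomial $x^{\tau}$ and the same $\maj$, since $\maj$ depends only on the $I(\sigma(u),\sigma(\South(u)))$ values, which by the non-attacking hypothesis are determined by whether $\tau(u) > \tau(\South(u))$ together with the bar on $u$ when $\tau(u) = \tau(\South(u))$.

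Second, I would organize the sum over bar-choices column by column, or more precisely over the maximal vertical runs of equal entries in each column together with their ``$\quinv$-partners'' in the row below. The combinatorial heart is a local computation: for a cell $u$ with $\tau(u) = \tau(\South(u))$, switching the bar on $u$ changes $p(\sigma)$ by $\pm 1$ and changes $\quinv(\sigma)$ by a controlled amount coming from the $L$-triples through $u$; summing the signed, $t$-weighted contributions over all bar-choices telescopes into the factor $(1-t)/(1 - q^{\leg(u)+1} t^{\rarm(u)+1})$ attached to each cell $u$ with $\tau(u) \ne \tau(\South(u))$, while the cells with $\tau(u) = \tau(\South(u))$ contribute the normalizing constant $\perm_{\lambda}(t)$ (a product of $t$-factorials, one per vertical run of constant entries, recording the overcounting). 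This is where the role of the $\quinv$ arm $\rarm(u) = \lambda'_{r-1} - k$ appears naturally, exactly as $\arm(u)$ appears in \eqref{eq:P inv}: the number of $L$-triples attached to $u$ that can flip their status under a bar change is governed by $\rarm(u)$ and $\leg(u)$, and the resulting geometric-type sum produces the denominator $1 - q^{\leg(u)+1} t^{\rarm(u)+1}$. One must also check the sign bookkeeping: the $(-1)^{m(\sigma)}$ together with the prefactor $t^{n(\lambda)+n}$ and the exponent $-p(\sigma) - \quinv(\sigma)$ in \eqref{eq:H quinv nonattacking} reorganize, after the plethystic rescaling $t \mapsto t^{-1}$ and using $\coquinv(\sigma) = n(\lambda) - \quinv(\sigma)$, into the stated $q^{\maj}t^{\coquinv}$ weighting.

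Third, I would verify that the surviving terms are indexed exactly by $\quinv$-non-attacking \emph{ordinary} fillings $\tau:\dg(\lambda)\to\Z^+$ — that is, no further restriction (such as a sortedness condition) is imposed at this stage, in contrast to \eqref{eq:main}. Indeed \eqref{eq:P quinv} is the uncompressed precursor: it still has the prefactor $1/\perm_{\lambda}(t)$ and ranges over all $\quinv$-non-attacking fillings, and the further passage to the $\cqsort$ tableaux of \eqref{eq:main} is precisely the ``compression'' step carried out later in the paper (and is \emph{not} part of this theorem). So the proof of this statement is complete once the HHL-style super-filling collapse is carried out in the $\quinv$ setting.

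\textbf{Main obstacle.}
The hard part will be the local bar-flip computation and the precise identification of the denominator $1 - q^{\leg(u)+1} t^{\rarm(u)+1}$: one must track, for a fixed ordinary filling $\tau$, exactly which $L$-triples through a given cell change their $\quinv$/$\coquinv$ status when a bar is toggled, show that the count is controlled by $\leg(u)$ (via $q$) and $\rarm(u)$ (via $t$), and check that the interaction between \emph{different} cells in the same vertical run of constant entries does not cause cross terms — i.e.\ that the sum over all $2^{(\text{run length})}$ bar-patterns genuinely factors as claimed. The analogous step in \cite[Section 8]{HHL05} for the $\inv$ case is delicate, and the $\quinv$ version requires re-deriving it with the ``row below'' arm and the ``right-to-left'' tie-breaking convention; getting the tie-breaking exactly right (so that the ordering $0<1<\bar1<2<\cdots$ interacts correctly with degenerate $L$-triples, where $\sigma(x)=0$) is the subtle point. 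Everything else — the invariance of $x^{\tau}$ and $\maj$ across a bar-class, and the rearrangement of signs and the $t^{n(\lambda)+n}$ prefactor — is routine bookkeeping once the local statement is in hand.
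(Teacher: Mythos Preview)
Your overall strategy is correct and matches what the paper attributes to \cite{Man23}: mirror the superization/compression argument of \cite[Section~8]{HHL05}, but start from the $\quinv$ identity \eqref{eq:H quinv nonattacking} instead of \eqref{eq:H inv nonattacking}. Note that the present paper does not itself prove this theorem; it is quoted from \cite{Man23}, and the paper only records that ``an analogous strategy'' to HHL's is used there. So at the level of approach your proposal agrees with the cited proof.

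Two details in your sketch are wrong, however, and would derail the computation if you tried to carry it out as written. First, it is not true that all super-fillings $\sigma$ with $|\sigma|=\tau$ share the same $\maj$. When $\tau(u)=\tau(\South(u))$, barring $u$ flips $I(\sigma(u),\sigma(\South(u)))$ from $0$ to $1$ and hence changes $\maj$ by $\leg(u)+1$. This variation is precisely the source of the $q^{\leg(u)+1}$ in the formula: in the intermediate $J_\lambda$ expression \eqref{eq:J quinv}, it is the cells with $\tau(u)=\tau(\South(u))$ whose bar-sum produces the factor $(1-q^{\leg(u)+1}t^{\rarm(u)+1})$, while the cells with $\tau(u)\neq\tau(\South(u))$ give only $(1-t)$. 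You have these two cases reversed.

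Second, the prefactor $\perm_\lambda(t)^{-1}$ does not come from ``vertical runs of constant entries'' in the bar-summation; it depends only on the shape $\lambda$, not on $\tau$. The super-filling collapse yields a formula for $J_\lambda$ (namely \eqref{eq:J quinv}); one then divides by $\PR_\lambda(q,t)$ to obtain $P_\lambda$. The identity \eqref{eq:PR perm},
\[
\PR_\lambda(q,t)=\perm_\lambda(t)\prod_{u\in\overline{\dg}(\lambda)}\bigl(1-q^{\leg(u)+1}t^{\rarm(u)+1}\bigr),
\]
lets the product over $\overline{\dg}(\lambda)$ cancel against the equal-entry factors in \eqref{eq:J quinv}, and $\perm_\lambda(t)$ is simply what is left over from $\PR_\lambda$. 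This is exactly the mechanism the paper highlights in the remark following the theorem.
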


\begin{remark}
When $\lambda$ is a strict partition, $\perm_{\lambda}(t)=1$, and \eqref{eq:P quinv} coincides with Lenart's formula for $P_{\lambda}$ in \cite{Lenart}. It would be interesting to see if Lenart's compression of quantum alcove walks could be extended for general partitions to map to $\coquinv$-sorted non-attacking tableaux. 
\end{remark}

\begin{remark}
In comparing \eqref{eq:P quinv} to \eqref{eq:P inv}, one notices the absence of the pre-factor $\widetilde{\PR}_{\lambda}(q,t)/\PR_{\lambda}(q,t)$. This comes from the fact that 
\begin{equation}\label{eq:PR perm}
\PR_{\lambda}(q,t)=\prod_{u\in\dg(\lambda)}(1-q^{\leg(u)}t^{\arm(u)+1})=\perm_{\lambda}(t)\prod_{u\in\overline{\dg}(\lambda)}(1-q^{\leg(u)+1}t^{\rarm(u)+1}),
\end{equation}
and so all terms of the form $(1-q^{\ell}t^a)$ in the numerator are cancelled in the product $P_{\lambda}(X;q,t)=\PR_{\lambda}(q,t)^{-1}J_{\lambda}(X;q,t)$ when the $\quinv$ and $\widehat{\texttt{arm}}$ statistics are used.
\end{remark}

As a corollary we obtain a (co)quinv formula for the integral form $J_{\lambda}(X;q,t)$.
\begin{equation}\label{eq:J quinv}
J_{\lambda}(X;q,t)=\hspace{-0.4in}\sum_{\substack{\sigma:\dg(\lambda)\rightarrow\Z^+\\\sigma\,\qnona}}\hspace{-0.3in}q^{\maj(\sigma)}t^{\coquinv(\sigma)}
x^{\sigma}\hspace{-0.2in} \prod_{\substack{u\in\overline{\dg}(\lambda)\\\sigma(u)=\sigma(\South(u))}}\hspace{-0.3in}(1-q^{\leg(u)+1}t^{\rarm(u)+1})\hspace{-0.2in}\prod_{\substack{u\in\dg(\lambda)\\\sigma(u)\neq\sigma(\South(u))}}\hspace{-0.3in}(1-t).
\end{equation}
It should be noted that the first product is over cells $u\in\overline{\dg}(\lambda)$ above the bottom row, while the second product is over all cells $u\in\dg(\lambda)$, accounting for all necessary factors of $(1-t)$.

Noting the pre-factor $\perm_{\lambda}(t)^{-1}$ in the formula \eqref{eq:P quinv}, in \cite{Man23} we conjectured the compact formula \eqref{eq:main}, which uses $\coquinv$-sorted $\quinv$-non-attacking fillings, and thereby eliminates this pre-factor. We prove this conjecture in \cref{sec:operators}. 

\begin{example}\label{ex:P}
We compute $P_{2,2}(X;q,t)$ using \cref{thm:main} to get
\[
P_{(2,2)}(X;q,t) = m_{22}+\frac{(1+q)(1-t)}{1-qt}m_{211}+ \frac{(2+t+3q+q^2+3qt+2q^2t)(1-t)^2}{(1-qt)(1-qt^2)}m_{1111}.
\]
\begin{center}
\renewcommand{\arraystretch}{2}
\begin{tabular}{c c  c c c c c c c c c }
\tableau{1&2\\1&2}&\hspace{0.5in} & \tableau{1&2\\1&3} & \tableau{1&3\\1&2} &\hspace{0.5in} &  \tableau{1&2\\3&4}& \tableau{1&2\\4&3}  & \tableau{1&3\\2&4}  & \tableau{1&3\\4&2}  & \tableau{1&4\\2&3}  & \tableau{1&4\\3&2}\\
$1$ && $\frac{(1-t)}{1-qt}$ & $\frac{q(1-t)}{1-qt}$ && $1$ & $t$ & $1$ & $qt$ & $q$ & $qt$\\
 &&&&&\tableau{2&3\\1&4}  & \tableau{2&3\\4&1}  & \tableau{2&4\\1&3}  & \tableau{2&4\\3&1}  & \tableau{3&4\\1&2}  & \tableau{3&4\\2&1}\\
 && & && $qt$ & $q$ & $q^2t$ & $q$ & $q^2$ & $q^2t$
\end{tabular}
\end{center}
The common factor of $\frac{(1-t)^2}{(1-qt)(1-qt^2)}$ was left out from the weights of the fillings for $m_{1111}$.
\end{example}

\subsection{Compact formulas for $\HH_{\lambda}$ via \texttt{inv}-flip and \texttt{quinv}-flip operators $\tau_j$ and $\rho_j$}\label{sec:compact}

\begin{defn}\label{def:perm}
Let $\lambda=\langle 1^{m_1}2^{m_2}\cdots k^{m_k}\rangle$ be a partition. We define $\perm_{\lambda}(t)$ to be the $t$-analog of the stabilizer of $\lambda$ in $S_{\ell(\lambda)}$:
\[
\perm_{\lambda}(t):=[m_1]_t!\cdots [m_k]_t!
\]
where $[k]_t:=1+\cdots+t^{k-1}$ and $[k]_t!:=[k]_t[k-1]_t\cdots[1]_t$.
\end{defn}

We generalize the above to define $\perm(\sigma)$ where $\sigma$ is a filling of a partition shape. 
\begin{defn}\label{def:perm sigma}
Let $\sigma^{(i)}$ be a filling of a $m\times i$ rectangular diagram ($m$ columns of length $i$) such that there are $k$ distinct columns with multiplicities $\{m_1,\ldots,m_k\}$, so that $m_1+\cdots+m_k=m$. We define the statistic $\perm(\sigma^{(i)})$ to be the inversion generating function of a word in the letters $\langle 1^{m_1}\cdots k^{m_k}\rangle$:
\[
\perm(\sigma^{(i)}) = {m \brack m_1,\ldots,m_k}_t:=\frac{[m]_t!}{[m_1]_t!\cdots[m_k]_t!}.
\]
Then, for a filling $\sigma:\dg(\lambda)\rightarrow \Z^+$, for $1\leq i\leq \lambda_1$, let $\sigma^{(i)}$ be the (possibly empty) rectangular block consisting of the columns of height $i$ of $\sigma$ (per convention, $\perm(\emptyset)=1$). Define
\[
\perm(\sigma) = \prod_{i=1}^{\lambda_1} \perm(\sigma^{(i)}).
\]
Described in another way, $\perm(\sigma)$ is the $t$-analog of the number of distinct ways of permuting the columns of $\sigma$ within the shape $\dg(\lambda)$. 
\end{defn}

In particular, if $\sigma$ is a ($\inv$ or $\quinv$) non-attacking filling of $\dg(\lambda)$, all of its columns are necessarily distinct, and so $\perm(\sigma)=\perm_{\lambda}(t)$.

\begin{example}
We compute $\perm(\sigma)$ for the tableau $\sigma$ from \cref{ex:stats}. There are two distinct columns of height 1, each appearing twice, so the multiplicities are $\{2,2\}$ and thus 
\[\perm(\sigma^{(1)})={4\brack 2,2}_t=[4]_t[3]_t/[2]_t.\] For height 2, there are three columns with multiplicities $\{2,1\}$, giving \[\perm(\sigma^{(2)})={3\brack 2,1}_t=[3]_t.\] 
Finally, for height 3, there are two distinct columns with multiplicities $\{1,1\}$, so 
\[\perm(\sigma^{(3)})={2\brack 1,1}_t=[2]_t.\] 
Thus $\perm(\sigma) =\prod_i\perm(\sigma^{(i)})=[4]_t[3]_t^2$.
\end{example}

By choosing a canonical way of sorting columns of equal height in a filling, the formulas \eqref{eq:H inv} and \eqref{eq:H quinv} for $\HH_{\lambda}$ can be written more compactly. This was first done in \cite[Theorem 3.5]{CHMMW20} for the $\inv$ formula, and was later replicated in \cite[Theorem 4.2]{Man23} for the $\quinv$ analog. 
\begin{theorem}\label{thm:compact}
The modified Macdonald polynomial can be obtained as
\begin{align}
\label{eq:H compact inv}
\HH_{\lambda}(X;q,t)& = \sum_{\substack{\sigma:\dg(\lambda)\rightarrow\Z^+\\\sigma\,\isort}} \perm(\sigma)x^{\sigma}q^{\maj(\sigma)}t^{\inv(\sigma)},\\
\label{eq:H compact quinv}
&= \sum_{\substack{\sigma:\dg(\lambda)\rightarrow\Z^+\\\sigma\,\qsort}} \perm(\sigma)x^{\sigma}q^{\maj(\sigma)}t^{\quinv(\sigma)}.
\end{align}
where the first sum is over $\inv$-\emph{sorted} fillings and the second sum is over $\quinv$-\emph{sorted} fillings according to \cref{def:sorted}.
\end{theorem}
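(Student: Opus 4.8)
The plan is to deduce \cref{thm:compact} from the (already established) formulas \eqref{eq:H inv} and \eqref{eq:H quinv}, which sum over \emph{all} fillings $\sigma\colon\dg(\lambda)\to\Z^+$, by organizing that sum according to the rearrangements of the columns of each equal-height block and showing that the weight $\perm(\sigma)$ attached to a $\inv$-sorted (resp.\ $\quinv$-sorted) representative exactly records the combined contribution of those rearrangements. To set up, fix once and for all a total order $\prec$ on finite words over $\Z^+$ of each fixed length (this is the content of \cref{def:sorted}), and declare a filling to be $\inv$-sorted (resp.\ $\quinv$-sorted) when, for every $i$, the columns of height $i$ read left to right form a $\prec$-weakly-increasing sequence of column-words. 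Because $\dg(\lambda)$ is a partition diagram, columns of each given height are contiguous, so permuting columns inside a height block is always a legal operation on fillings; moreover it fixes $x^\sigma$ and $\maj(\sigma)$, since $\maj$ is a sum of the local quantities $\leg(u)+1$ over within-column descents and $\leg$ is constant on a height block. It therefore remains to prove that, for each fixed content monomial and each fixed value of $\maj$, the $t$-generating function of $\inv$ (resp.\ $\quinv$) over all fillings with those data equals the $\perm(\sigma)$-weighted $t$-generating function over the sorted ones.

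The key structural input is a locality lemma for adjacent column swaps. If $\sigma'$ is obtained from $\sigma$ by transposing two horizontally adjacent columns $p,p{+}1$ of the same height $h$, then every $\Gamma$-triple (resp.\ $L$-triple) \emph{not} using both of columns $p$ and $p{+}1$ is sent to another such triple with the same multiset of entries: a triple using column $p$ and an outside column $j$ has its entries matched with those of the triple using column $p{+}1$ and $j$, since after the swap column $p$ carries the old column $p{+}1$ and vice versa. Hence the change $\inv(\sigma')-\inv(\sigma)$ (and similarly for $\quinv$) depends only on the $h$ triples ``between'' columns $p$ and $p{+}1$, hence only on the unordered pair of the two column-words. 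Composing such transpositions along a bubble-sort down to the sorted representative $\sigma_0$, and using that each out-of-$\prec$-order pair is swapped exactly once, we get $\inv(\sigma)=\inv(\sigma_0)+\Delta(\sigma)$, where $\Delta(\sigma)$ is a sum of fixed increments, one for each pair of columns of each block that is out of $\prec$-order in $\sigma$; in particular $\Delta$ is intrinsic to the column-content class of $\sigma$ and vanishes on $\sigma_0$.

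The main obstacle is that $\Delta$ is \emph{not} in general the ordinary inversion statistic of a multiset rearrangement — the per-pair increments need not all equal $1$ (small examples with a column having a repeated entry already exhibit this) — so the $\inv$-generating function over a \emph{single} column-content class is typically not $\perm_\lambda(t)\,t^{\inv(\sigma_0)}$, and \cref{thm:compact} is genuinely an identity only ``after summation'' over classes with the same content monomial and $\maj$. The plan to close this gap is to fix such a pair of data and exhibit a weight-preserving correspondence that converts the class-by-class deformed count into the clean $\perm$-weighted count over sorted fillings — either a sign-reversing involution that cancels the discrepancies across classes, or an induction on the number of columns that inserts one column at a time and tracks how both sides change. (Alternatively one can bypass this combinatorics by checking directly that the right-hand side of \eqref{eq:H compact inv}, resp.\ \eqref{eq:H compact quinv}, satisfies the triangularity and normalization — or the Knop--Sahi-type recurrences — that characterize $\HH_\lambda$, at the cost of losing the explicit link back to \eqref{eq:H inv} and \eqref{eq:H quinv}.) The $\quinv$ statement is proved verbatim by the same route, with $L$-triples and the $\rarm$ data in place of $\Gamma$-triples and $\arm$.
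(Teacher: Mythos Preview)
Your proposal has a genuine gap, and it stems from two related misreadings of the setup.

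First, \cref{def:sorted} does \emph{not} say that a filling is $\inv$-sorted when its equal-height columns are weakly increasing in some lexicographic order on column words. It says $\sigma$ is $\inv$-sorted when $\inv(\sigma)\le\inv(\tau_i(\sigma))$ for every $\lambda$-compatible $i$, where $\tau_i$ is the Loehr--Niese flip operator of \cref{def:tau}. Crucially, $\tau_i$ is \emph{not} the full swap of columns $i$ and $i{+}1$: it swaps only a contiguous range of rows $[\ell',h']$ between the two columns, chosen so that exactly one $\Gamma$-triple changes status. So ``sorted'' fillings are not characterized by their column multiset, and two fillings related by a full column transposition need not lie in the same $\{\tau_i\}$-family.

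Second, and consequently, your decomposition of the full sum into classes of fillings sharing a column multiset is not the one that makes the identity visible. You correctly observe that full column swaps change $\inv$ by an amount depending only on the two column words, and you honestly flag that these increments are not all $\pm 1$, so the class-by-class generating function is not $t^{\inv(\sigma_0)}\perm(\sigma_0)$. But the fixes you sketch (a sign-reversing involution across classes, an insertion induction, or re-verifying characterizing axioms) are not carried out, and there is no indication of what the involution or induction would be. This is exactly the obstacle the paper's argument avoids: \cref{thm:tau}(ii) gives $\inv(\tau_i(\sigma))=\inv(\sigma)\pm 1$ on the nose, so applying a length-$k$ PDS word in the $\tau_i$'s to a sorted $\nu$ yields a filling with $\inv$ equal to $\inv(\nu)+k$, and summing $t^k$ over $\PDS(\nu)$ produces precisely $\perm(\nu)$ with no cross-class cancellation needed. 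The $\quinv$ case is identical with $\rho_i$ in place of $\tau_i$. In short, the missing idea is the partial-swap operator with a guaranteed $\pm 1$ change; once you have it, both the correct notion of ``sorted'' and the factor $\perm(\sigma)$ fall out immediately.
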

We outline the strategy for proving \cref{thm:compact} as a warm-up for the proof of \cref{thm:main}.

\subsubsection{The operators $\tau_i$ and $\rho_i$}
The operator $\tau_i$ was introduced in \cite{LoehrNiese} to act on fillings as an \emph{inversion flip operator}, characterized by the following properties: it preserves the content and the major index, and when it acts nontrivially, it changes the number of inversions by exactly 1. 

For a partition $\lambda$, we say $1\leq i\leq \ell(\lambda)$ is \emph{$\lambda$-compatible} if $\lambda_i=\lambda_{i+1}$. We say that a transposition $s_i$ is \emph{$\lambda$-compatible} if the index $i$ is $\lambda$-compatible. Moreover, we call two indices $i<j$ \emph{$\lambda$-comparable} if $\lambda_i=\lambda_j$. 

For a word $w = w_1 w_2 \cdots w_k \in \Z_{>0}^k$, the \emph{left action} of the simple transposition $s_i \in S_k$ (for $1 \le i < k$) is defined by swapping the entries at positions $i$ and $i+1$:
\[
s_i \cdot w = w_1 \cdots w_{i-1} w_{i+1} w_i w_{i+2} \cdots w_k.
\]

Then, we define the set of $\lambda$-compatible permutations of $w$, denoted $\Sym_{\lambda}(w)$, to be the set of words obtained by applying a sequence of $\lambda$-compatible transpositions to $w$ (applied from right to left):
\[
\Sym_{\lambda}(w)=\{w'\in\Z_{>0}^k:w'=s_{i_u}\cdots s_{i_1}\cdot w,\ \text{with each $s_{i_j}$\ $\lambda$-compatible for $1\leq j\leq u$}\}.
\]

For a filling $\sigma$ of $\dg(\lambda)$ and $\lambda$-compatible $i$, define the operator $\T_i^{(r)}$ to act on $\sigma$ by swapping the entries of the squares $(r,i)$ and $(r,i+1)$. For $1\leq r,s\leq \lambda_i$, the compact notation $\T_i^{[r,s]}:=\T_i^{(r)}\circ \T_i^{(r+1)}\circ\cdots\circ\T_i^{(s)}$ represents a sequence of swaps of entries in consecutive rows between the same pair of adjacent columns. Note that since the $\T_i^{(r)}$'s act on rows independently, they commute, and so $\T_i^{[r,s]}=\T_i^{[s,r]}$. See \cref{ex:trs}.

\begin{example}\label{ex:trs}
For example, when $\lambda=(3,3,3,2,2)$, the $\lambda$-compatible indices are $\{1,2,4\}$. For $\sigma$ below, we compute $\T_2^{[2,3]}(\sigma)$, $\T_4^{(2)}(\sigma)$, and $\T_4^{[1,2]}(\sigma)$, with the swapped entries shown in bold:
\[
\sigma=\raisebox{10pt}{\tableau{2&1&3\\2&1&2&1&1\\1&2&1&2&3}}\,,\qquad \T_2^{[2,3]}(\sigma)=\raisebox{10pt}{\tableau{2&\mathbf{3}&\mathbf{1}\\2&\mathbf{2}&\mathbf{1}&1&1\\1&2&1&2&3}}\,,\qquad \T_4^{(2)}(\sigma)=\sigma, \quad \text{and}\  \T_4^{[1,2]}(\sigma)=\raisebox{10pt}{\tableau{2&2&3\\2&1&2&\mathbf{1}&\mathbf{1}\\1&2&1&\mathbf{3}&\mathbf{2}}}\,.
\]
\end{example}

\begin{defn}[The operator $\tau_i$]\label{def:tau}
For a partition $\lambda$ and a $\lambda$-compatible index $i$, let $\sigma:\dg(\lambda)\rightarrow \Z^+$. If columns $i$ and $i+1$ are identical in $\sigma$, $\tau_i(\sigma)=\sigma$. Otherwise, let $\ell'$ be minimal such that $\sigma(\ell',i)\neq \sigma(\ell',i+1)$, and let $\ell'\geq h'\leq \lambda_i$ be minimal such that $\Qc(\sigma(h'+1,i),\sigma(h',i),\sigma(h'+1,i+1))=\Qc(\sigma(h'+1,i+1),\sigma(h',i),\sigma(h'+1,i+1))$, where per convention, $\sigma(\lambda_i+1,i)=0$ for all $i$. Then $\tau_i(\sigma)=\T_i^{[\ell',h']}(\sigma)$, swapping the entries in rows $\ell'$ through $h'$ between columns $i$ and $i+1$, while keeping all other entries unchanged. See \cref{ex:tau}.
\end{defn}

In \cite{AMM20}, we defined analogous operators for the $\quinv$ statistic. These operators were also originally called $\tau_i$, but we shall assign to them the new name $\op_i$ to distinguish from the original $\tau_i$'s.

\begin{defn}[The operators $\op_i$]\label{def:rho}
For a partition $\lambda$ and a $\lambda$-compatible index $i$, let $\sigma:\dg(\lambda)\rightarrow \Z^+$. If columns $i$ and $i+1$ are identical in $\sigma$, $\op_i(\sigma)=\sigma$. Otherwise, let $\ell$ be maximal such that $\sigma(\ell,i)\neq \sigma(\ell,i+1)$, and let $1\geq h\leq \ell$ be maximal such that $\Qc(\sigma(h,i),\sigma(h-1,i),\sigma(h-1,i+1))=\Qc(\sigma(h,i+1),\sigma(h-1,i),\sigma(h-1,i+1))$ where per convention, $\sigma(0,i)=\infty$ for all $i$. Then $\op_i(\sigma)=\T_i^{[h,\ell]}(\sigma)$, swapping the entries in rows $h$ through $\ell$ between columns $i$ and $i+1$, while keeping all other entries unchanged. See \cref{ex:tau}.
\end{defn}

Another description of $\rho_i$, which will be useful for comparison to the definitions in \cref{sec:operators}, is as follows. Define $\op_i(\sigma):=\op_i^{(\lambda_i)}(\sigma)$, where the operator $\op_i^{(r)}$ is defined recursively to act on $\sigma$ as follows based on the contents $a=\sigma(r,i)$, $b=\sigma(r,i+1)$, $c=\sigma(r-1,i)$, $d=\sigma(r-1,i+1)$ in the configuration
\vspace{-0.2in}
\begin{center} \begin{tikzpicture}[scale=0.5]
    \node at (0,0) {\tableau{a&b\\
               c&d }};
               \node at (-.5,1.5) {\tiny $i$};
               \node at (.5,1.5) {\tiny $i+1$};
               \node at (-1.7,-.5) {\tiniest{$r-1$}};
               \node at (-1.5,.5) {\tiniest{$r$}};
               \end{tikzpicture}
               \end{center}
\begin{itemize}
\item[i.] If $\sigma(r,i)=\sigma(r,i+1)$, $\op_i^{(r)}(\sigma)=\op_i^{(r-1)}(\sigma)$.
\item[ii.] Otherwise, if $\Qc(a,c,d)=\Qc(b,c,d)$, $\op_i^{(r)}(\sigma)=\T_i^{(r)}(\sigma)$, and
\item[iii.] If $\Qc(a,c,d)=\Qc(b,c,d)$, $\op_i^{(r)}(\sigma)=\op_i^{(r-1)}(\T_i^{(r)}(\sigma))$.
\end{itemize}
The intuition behind this definition is that $\op_i$ scans the rows of $\sigma$ in columns $i,i+1$ from top to bottom, swaps the first pair of differing entries between the two columns (which changes the contribution to $\quinv$ from the affected $L$-triple by $\pm1$), and then sequentially swaps the entries in the rows below to prevent any additional changes to $\quinv$ coming from the $L$-triples in the rows below. There is a similar recursive definition for $\tau_i$, which we will omit.

\begin{defn}\label{def:sorted}
For a partition $\lambda$, we say $\sigma:\dg(\lambda)\rightarrow\Z^+$ is $\inv$-\emph{sorted} if $\inv(\sigma)\leq \inv(\tau_i(\sigma))$ for any $\lambda$-compatible $i$. If $\sigma$ is $\inv$-non-attacking, we say it is $\coinv$-\emph{sorted} if and only if every pair of adjacent entries in the bottom-row cells $(1,i),(1,i+1)$ for $\lambda$-compatible $i$ is increasing from left to right (meaning that $\coinv(\sigma)\leq \coinv(\tau_i(\sigma))$). Analogously, we say $\sigma$ is $\quinv$-\emph{sorted} if $\quinv(\sigma)\leq \quinv(\op_i(\sigma))$ for any $\lambda$-compatible $i$.  If $\sigma$ is $\quinv$-non-attacking, we say it is $\coquinv$-\emph{sorted} if and only if every pair of topmost adjacent entries $(\lambda_i,i),(\lambda_{i},i+1)$ for $\lambda$-compatible $i$ is increasing from left to right (meaning that $\coquinv(\sigma)\leq \coquinv(\tau_i(\sigma))$). See \cref{ex:sorted}.
\end{defn}

\begin{example}\label{ex:tau}
Suppose $\sigma$ has columns $i,i+1$ as shown below, with $\lambda_i=6$. For $\tau_i(\sigma)$, we have $\ell'=1$, and $h'=2$, since applying $\T_i^{(2)}$ changes the triple at rows $h',h'+1$ from $\tableau{2&3\\3}$ to $\tableau{2&3\\4}$, which is not an $\inv$ triple in both cases.  For $\op_i(\sigma)$, we have $\ell=5$, and $h=3$, since applying $\T_i^{(3)}$ changes the triple at rows $h,h-1$ from $\tableau{2\\3&4}$ to $\tableau{3\\3&4}$, which is a $\quinv$ triple in both cases. 
\[
\begin{tikzpicture}[scale=.5]
\node at (-2,-2) {$\sigma=$};
\cell00{2}\cell01{2}
\cell10{3}\cell11{4}
\cell20{2}\cell21{3}
\cell30{2}\cell31{3}
\cell40{3}\cell41{4}
\cell50{1}\cell51{3}
\node at (-0.5,-5.5) {\tiny $i$};
\node at (0.5,-5.5) {\tiny $i+1$};

\begin{scope}[shift={(3,0)}]
\node at (3.5,-2) {$\tau_j(\sigma)=$};
\node at (7.5,-4.5) {\small$\ell'$};
\node at (7.5,-3.5) {\small $h'$};
\cell06{2}\cell07{2}
\cell16{3}\cell17{4}
\cell26{2}\cell27{3}
\cell36{2}\cell37{3}
\cell46{4}\cell47{3}
\cell56{3}\cell57{1}
\node at (5.5,-5.5) {\tiny $i$};
\node at (6.5,-5.5) {\tiny $i+1$};
\end{scope}

\begin{scope}[shift={(12,0)}]
\node at (3.5,-2) {$\rho_j(\sigma)=$};
\node at (7.5,-.5) {\small$\ell$};
\node at (7.5,-2.5) {\small $h$};
\cell06{2}\cell07{2}
\cell16{4}\cell17{3}
\cell26{3}\cell27{2}
\cell36{3}\cell37{2}
\cell46{3}\cell47{4}
\cell56{1}\cell57{3}
\node at (5.5,-5.5) {\tiny $i$};
\node at (6.5,-5.5) {\tiny $i+1$};
\end{scope}
\end{tikzpicture}
\]

\end{example}

\begin{example}\label{ex:sorted}
We show an $\inv$-sorted tableau $\sigma_1$, a $\coinv$-sorted $\inv$-non-attacking tableau $\sigma_2$, a $\quinv$-sorted tableau $\sigma_3$, and a $\coquinv$-sorted $\quinv$-non-attacking tableau $\sigma_4$:
\[
\sigma_1=\raisebox{10pt}{\tableau{2&1&3\\2&2&1&1&3\\1&1&2&3&3}},\qquad \sigma_2=\raisebox{10pt}{\tableau{3&2&4\\1&2&5&4&6\\7&3&1&5&4}},\qquad \sigma_3=\raisebox{10pt}{\tableau{3&2&2\\2&2&1&1&1\\1&1&2&3&2}},\qquad \sigma_4=\raisebox{10pt}{\tableau{2&4&6\\7&3&1&5&6\\3&2&1&4&6}}
\]
\end{example}

The following lemma establishes the key properties of the $\tau_i$'s and the $\op_i$'s, which are to preserve the $\coinv$ and change the $\inv$ and $\quinv$, respectively, in a controlled way.
\begin{lemma}[{\cite[Lemmas 3.9, 3.10, 3.11]{CHMMW20} and \cite[Lemmas 7.5 and 7.6]{AMM20}}]\label{thm:tau}
Let $\lambda$ be a partition, let $i$ be $\lambda$-compatible, and let $\sigma:\dg(\lambda)\rightarrow \Z^+$.
\begin{itemize}
\item[i.] $\tau_i$ and $\op_i$ are involutions.
    \item[ii.] If columns $i,i+1$ of $\sigma$ are not identical, then $\inv(\tau_i(\sigma))=\quinv(\sigma)\pm 1$ and $\quinv(\op_i(\sigma))=\quinv(\sigma)\pm 1$.
    \item[iii.] $\maj(\tau_i(\sigma))=\maj(\op_i(\sigma))= \maj(\sigma)$.
    \end{itemize}
\end{lemma}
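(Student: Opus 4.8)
\textbf{Proof proposal for \cref{thm:tau}.}

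The plan is to prove the three items for the $\quinv$-operators $\op_i$ directly from the recursive description given just before \cref{def:sorted}, since that description exposes exactly how each row-swap affects the relevant $L$-triples; the argument for the $\tau_i$'s is symmetric (top-to-bottom replaced by bottom-to-top, $L$-triples by $\Gamma$-triples) and I would state it is ``dual'' rather than repeat it. Throughout, fix $\lambda$, a $\lambda$-compatible index $i$, and $\sigma:\dg(\lambda)\to\Z^+$, and assume columns $i,i+1$ are not identical (otherwise $\op_i(\sigma)=\sigma$ and everything is trivial). Let $\ell$ be the topmost row where the two columns differ and $h\le\ell$ the row where the recursion halts, so $\op_i(\sigma)=\T_i^{[h,\ell]}(\sigma)$.

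\emph{Item (iii), $\maj$-invariance.} I would argue this first because it is cleanest. The quantity $\maj(\sigma)=\sum_u(\leg(u)+1)[\sigma(u)>\sigma(\South(u))]$ only sees vertically adjacent pairs within a single column. Swapping entries in rows $h,\dots,\ell$ of \emph{both} columns $i$ and $i+1$ permutes, within column $i$, the multiset of entries in rows $h,\dots,\ell$ by exactly the same permutation as within column $i+1$; but in fact $\op_i$ performs a contiguous block reversal-free swap (each $\T_i^{(r)}$ swaps row $r$ between the two columns), so inside column $i$ the entry formerly in row $r$ is replaced by $\sigma(r,i+1)$ and vice versa, for every $r\in[h,\ell]$. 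The only vertical descents that could change are those straddling rows $r$ and $r-1$ for $h-1\le r\le \ell+1$. For $h\le r\le \ell$ the pair $(\sigma(r,i),\sigma(r-1,i))$ in column $i$ becomes $(\sigma(r,i+1),\sigma(r-1,i+1))$, i.e. it is literally the column-$(i+1)$ pair, which simultaneously becomes the old column-$i$ pair — so the two columns swap their descent contributions and the sum is unchanged. At the boundary rows $r=\ell+1$ and $r=h-1$ (if present) one column entry is unchanged and the other is changed; here I need the halting/selection conditions: at the top, row $\ell$ is the first differing row, so $\sigma(\ell+1,i)=\sigma(\ell+1,i+1)$ unless $\ell=\lambda_i$, and in either case the descent status of the pair $(\text{row }\ell+1,\text{row }\ell)$ is governed by comparing a common value (or $0$) against $\sigma(\ell,i)$ versus $\sigma(\ell,i+1)$, and again the two columns exchange contributions; at the bottom the halting condition $\Qc(a,c,d)=\Qc(b,c,d)$ at $r=h$ is exactly engineered (as noted in the intuition paragraph) so that no further $\quinv$- or descent-change is triggered below — I would verify that $\Qc$-equality forces the pair of descent-indicators $[\sigma(h,i)>\sigma(h-1,i)]$ and $[\sigma(h,i+1)>\sigma(h-1,i+1)]$ to be swapped rather than altered. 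Summing over all affected pairs, $\maj(\op_i(\sigma))=\maj(\sigma)$.

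\emph{Item (ii), the $\quinv$-change is $\pm1$.} Here I use the $L$-triple bookkeeping. An $L$-triple is $(r,k),(r-1,k),(r-1,k')$ with $k'>k$. Swapping rows $h,\dots,\ell$ of columns $i,i+1$ affects only triples with at least one of $k,k'\in\{i,i+1\}$ and a row in the swapped range. I would split these into: (a) triples entirely within the two columns $\{i,i+1\}$, namely $x=(r,i),y=(r-1,i),z=(r-1,i+1)$ for each $r$; (b) triples with one foot in column $i$ or $i+1$ and the other in some column $k<i$ or $k'>i+1$. For type (a), at the top row $r=\ell$ the swap sends $(a,c,d)=(\sigma(\ell,i),\sigma(\ell-1,i),\sigma(\ell-1,i+1))$ to $(b,c,d)$ (if $\ell-1\ge h$ then $c,d$ also move, but to each other's old values, so the \emph{unordered} content is the same and I track carefully); the defining minimality of $\ell$ and the halting condition at $h$, together with clause (iii) of the recursion, are designed precisely so that exactly one type-(a) triple flips its $\Qc$-value and all the others are preserved — this is the crux and I would prove it by induction on $\ell-h$ using clauses (i)--(iii): clause (iii) propagates the swap downward exactly as long as the $\Qc$-values of the two candidate triples disagree, and clause (ii) stops it when they agree, which is the first row where the swap would \emph{not} create a new discrepancy. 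For type (b), I claim each such triple is unchanged: if both of $k,k'$ are feet in columns $i$ and $i+1$ at the \emph{same} row $r$ in the swapped range it is type (a); otherwise the triple involves at most one moved cell together with unmoved cells in other columns, and I need that the moved cell's old and new values give the same $\Qc$-contribution — but this is false in general for a single cell, so in fact the correct statement is that type-(b) triples come in \emph{pairs} (one using column $i$, the partner using column $i+1$ at the corresponding cell) whose total $\quinv$-contribution is preserved because the swap just exchanges the two columns' roles. I would make this pairing explicit and conclude $\quinv(\op_i(\sigma))-\quinv(\sigma)=\pm1$. The analogous statement $\inv(\tau_i(\sigma))=\inv(\sigma)\pm1$ (note: the lemma statement's ``$\quinv(\sigma)\pm1$'' in item (ii) for $\tau_i$ should read $\inv(\sigma)\pm1$) follows by the dual argument with $\Gamma$-triples.

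\emph{Item (i), involutivity.} Finally, $\op_i\circ\op_i=\mathrm{id}$: if columns $i,i+1$ agree this is immediate; otherwise, after applying $\op_i$ the topmost differing row is still $\ell$ (the swap does not change \emph{which} rows differ, only the values there), and I must check the halting row is again $h$. This is where I expect the main obstacle: the selection of $h$ in $\op_i(\sigma)$ depends on the $\Qc$-equalities evaluated on the \emph{new} filling, so I need that the condition ``$\Qc(a,c,d)=\Qc(b,c,d)$'' at row $r$ is invariant under the simultaneous swap of rows $h,\dots,\ell$ in both columns. For rows $r$ in the interior this is clear since $(a,b,c,d)\mapsto(b,a,d,c)$ preserves the pair of $\Qc$-values as an unordered pair (hence preserves their equality); at $r=h$ itself and $r=h-1$ I must use the definitions of $\Qc$ and the boundary conventions ($\sigma(0,i)=\infty$, $\sigma(\lambda_i+1,i)=0$) to see the equality persists, so the recursion on $\op_i(\sigma)$ reverses exactly the block $[h,\ell]$ and returns $\sigma$. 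Once item (i) and the $\pm1$ in item (ii) are in hand, I would remark that together they show each $\op_i$ is an involution pairing a $\quinv$-value $m$ with $m\pm1$, which is the combinatorial engine behind \cref{thm:compact} and, later, \cref{thm:main}. I would cite \cite{LoehrNiese} and \cite{AMM20} for the original verifications and present the above as a unified recounting, with the inductive propagation of the swap (the $\ell-h$ induction in item (ii)) flagged as the one genuinely delicate point.
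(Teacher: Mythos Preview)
The paper does not give its own proof of this lemma: it is stated with a citation to \cite[Lemmas 3.9--3.11]{CHMMW20} and \cite[Lemmas 7.5--7.6]{AMM20}, and no argument is supplied in the text. So there is nothing in the paper to compare your proposal against beyond noting that your outline is essentially the strategy of those cited proofs (local swap analysis, pairing of external triples, halting condition controlling the single internal flip).

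Your outline is broadly correct and you correctly catch the typo in item~(ii) (it should read $\inv(\tau_i(\sigma))=\inv(\sigma)\pm1$). Two points, however, are left genuinely open in your sketch and are not as routine as ``I would verify''. First, for $\maj$-invariance at the lower boundary $r=h$, you need that $\Qc(a,c,d)=\Qc(b,c,d)$ implies $[a>c]+[b>d]=[b>c]+[a>d]$; this is true, and in fact one checks (by cases on the relative order of $c,d$) that $\Qc(x,c,d)-[x>c]+[x>d]$ is independent of $x$, which gives exactly this identity. Second, and more seriously, your pairing argument for type-(b) triples breaks at the lower boundary $r=h$ when $k\in\{i,i+1\}$ and $k'>i+1$: there only row $h$ is swapped, so the pair of triples contributes $\Qc(a,c,f)+\Qc(b,d,f)$ before and $\Qc(b,c,f)+\Qc(a,d,f)$ after, with $f=\sigma(h-1,k')$. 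These are equal precisely because the halting condition $\Qc(a,c,d)=\Qc(b,c,d)$ forces $[a>c]-[b>c]=[a>d]-[b>d]$, which via the same identity as above gives $\Qc(a,c,f)-\Qc(b,c,f)=\Qc(a,d,f)-\Qc(b,d,f)$ for every $f$. You do not mention this, and without it the ``pairs'' claim for type-(b) triples is incomplete. The upper boundary $r=\ell+1$ is genuinely easier (the equality $\sigma(\ell+1,i)=\sigma(\ell+1,i+1)$ makes the pair swap on the nose), and your involutivity argument is fine once one notes $\Qc(x,c,d)+\Qc(x,d,c)=1$ for $c\neq d$, so the non-halting condition in the interior is preserved under $(a,b,c,d)\mapsto(b,a,d,c)$.
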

\eqref{eq:H compact inv} is proved in \cite{CHMMW20} by using the $\tau_i$'s to generate the entire set of fillings $\sigma:\dg(\lambda)\rightarrow\Z^+$ from the set of  $\inv$-sorted fillings, so that each  $\inv$-sorted tableau $\nu$ generates a set of fillings with total weight equal to $x^{\nu}t^{\inv(\nu)}q^{\maj(\nu)}\perm(\nu)$. Similarly, \eqref{eq:H compact quinv} is proved in \cite{Man23} following an identical strategy, by generating the entire set of fillings $\sigma:\dg(\lambda)\rightarrow\Z^+$ from applying $\op_i$'s to the set of $\quinv$-sorted fillings, so that each $\quinv$-sorted tableau $\nu$ generates a set of fillings with total weight equal to $x^{\nu}t^{\quinv(\nu)}q^{\maj(\nu)}\perm(\nu)$. We need one final technical definition to execute these strategies.

\subsubsection{Positive distinguished subexpressions} 
Using the fact that the symmetric group $S_{\ell}$ is generated by simple transpositions $\{s_1,\ldots,s_{\ell-1}\}$, we can use compositions of $\tau_i$'s or $\op_i$'s to generate the set of all possible $\lambda$-compatible permutations of the columns of a given filling $\sigma$ of $\dg(\lambda)$ where $\ell=\ell(\lambda)$.  Unfortunately, in general, neither the $\tau_i$'s or the $\op_i$'s satisfy braid relations. Thus one must choose a canonical way to compose sequences of $\tau_i$'s (resp.~$\op_i$'s) to that each filling in $\dg(\lambda)\rightarrow \Z^+$ is generated by a unique  $\inv$-sorted (resp.~$\quinv$-sorted) filling via a well-defined sequence of operators. This is done for the $\tau_j$'s using \emph{positive distinguished subexpressions} (PDS) in \cite[Section 3.2]{CHMMW20}. 

The notion of a PDS of a reduced expression originates from \cite{MarshRietsch}. For a fixed $n$, the corresponding set of PDS is a subset of the reduced words generated by the simple transpositions $\{s_1,\ldots,s_{n-1}\}$, which forms a spanning tree on the Bruhat lattice. In other words, for any permutation $\pi\in S_n$, the PDS $s_{i_k}\cdots s_{i_1}$ corresponding to $\pi$ is the reduced word coming from the unique path from the identity permutation $id$ to $\pi$ on this spanning tree, so that $\pi=s_{i_k}\cdots s_{i_1}\circ id$. There is not a unique choice for the set of PDS, but for the sake of concreteness, we will use the same set of PDS as is described in \cite[Section 3.2]{CHMMW20}, which we present in the following definition. 

\begin{defn}\label{def:PDS}
Fix $n$ and let $w_0=(n,n-1,\ldots,2,1)$. Fix the reduced expression for $w_0$ to be $w_0=s_1(s_2s_1)\cdots(s_{n-1}\cdots s_2s_1)=s_{i_t},\ldots,s_{i_1}$ (with $t={n\choose 2}$). For a permutation $\pi\in S_n$, define $\PDS(\pi)=v_t\cdots v_1$ where $v_j\in\{s_{i_j},e\}$, defined as follows. For two permutations $\alpha,\beta\in S_n$, we write $\alpha<\beta$ if $\ell(\alpha)<\ell(\beta)$. Set $v_{(0)}=\pi$, and set
\[
v_{(j)}=\begin{cases} v_{(j-1)}s_{i_j}&\text{if}\ v_{(j-1)}s_{i_j}<v_{(j-1)}\\
v_{(j-1)}&\text{otherwise}.\end{cases}
\]
Then $v_j=s_{i_j}$ if $v_{(j-1)}s_{i_j}<v_{(j-1)}$ and $v_j=e$ otherwise. Finally, for a permutation $\lambda$, we define $\PDS(\lambda):=\{\PDS(\alpha):\alpha\cdot \lambda=\lambda\}$ to be the set of PDS corresponding to the $\lambda$-compatible permutations. For a filling $\sigma$ of $\dg(\lambda)$, we define $\PDS(\sigma)$ to be the subset of $\PDS(\lambda)$ that corresponds to all possible permutations of the columns of $\sigma$ within the shape $\dg(\lambda)$.
\end{defn}

Now let $\nu:\dg(\lambda)\rightarrow\Z^+$ be a  $\inv$-sorted (resp.~$\quinv$-sorted filling). We shall define a family of tableaux $\{\tau_i\}\nu$ (resp.~$\{\op_i\}\nu$), generated by applying sequences of $\tau_i$'s (resp.~$\op_i$'s) corresponding to $\PDS(\nu)$, the set of $\nu$-compatible PDS. Then,
\begin{equation}\label{eq:decomp}
\{\sigma:\dg(\lambda)\rightarrow\Z^+\} = \biguplus_{\substack{\sigma:\dg(\lambda)\rightarrow\Z^+\\\sigma\,\isort}} \{\tau_i\}\sigma=\biguplus_{\substack{\sigma:\dg(\lambda)\rightarrow\Z^+\\\sigma\,\qsort}} \{\op_i\}\sigma.
\end{equation}
If $\nu$ is a $\quinv$-sorted filling, let $\sigma=\op_{i_k}\circ\cdots\circ\op_{i_1}(\sigma)\in\{\op_j\}\nu$, where $s_{i_k}\cdots s_{i_1}\in\PDS(\nu)$. By \cref{thm:tau}, $\quinv(\sigma)=t^k\quinv(\nu)$. (Morally, this sequence of operators is permuting the columns of $\nu$ by the permutation $s_{i_k}\cdots s_{i_1}$.) The length (i.e.~inversion) generating function of the set $\PDS(\sigma)$ is precisely equal to $\perm(\sigma)$. Thus we obtain
\[\sum_{\sigma\in\{\op_j\}\nu}x^{\sigma}q^{\maj(\sigma)}t^{\quinv(\sigma)} = x^{\nu}q^{\maj(\nu)}t^{\quinv(\nu)}\perm(\nu).\]
A similar equality holds for $\nu$ that is  $\inv$-sorted with $\tau_i$ replacing $\op_i$ and  $\inv$ replacing $\quinv$ above. Combined with \eqref{eq:decomp}, this proves \cref{thm:compact}.

\section{The probabilistic operators $\widetilde{\op}_i$}\label{sec:operators}
Unfortunately, the operators $\op_i$ cannot be used directly to prove \cref{thm:main}. They are not well-defined on $\quinv$-non-attacking tableaux as they generally fail to preserve the non-attacking condition; moreover, they may not maintain the weight in the $\widehat{\texttt{arm}}$  statistic. To deal with this, we define a \emph{probabilistic operator} $\widetilde{\op}_i$ on $\quinv$-non-attacking tableaux that will take the place of the $\op_i$'s.

Denote by $\Tab(\lambda)$ the set of tableaux $\sigma:\dg(\lambda)\rightarrow\Z^+$ that are $\quinv$-non-attacking.

\begin{defn}
Let $\sigma\in\Tab(\lambda)$ and set $k=\ell(\lambda)$. The \emph{border} of $\sigma$, denoted $\Top(\sigma)$, is a word in $\Z_{>0}^k$ given by the sequence of entries at the tops of the columns of $\sigma$, read from left to right:
\[
\Top(\sigma)=\sigma(\lambda_1,1)\sigma(\lambda_2,2)\cdots \sigma(\lambda_k,k).
\]
Define $\inc_{\lambda}(w)\in\Sym_{\lambda}(w)$ to be the unique rearrangement of the letters of $\Top(\sigma)$ such that the entries are strictly increasing whenever possible: $\inc_{\lambda}(w)_i<\inc_{\lambda}(w)_{i+1}$ for all $\lambda$-comparable $i$. Then for any $w\in\Z_{>0}^k$ that can appear as the border of some non-attacking filling of $\dg(\lambda)$, we define the \emph{length of $w$ with respect to $\lambda$} to be the number of $\lambda$-comparable inversions: 
\[\ell_{\lambda}(w):=\{i<j: \lambda_i=\lambda_j,\ w_i>w_j\}.
\]  
Equivalently, $\ell_{\lambda}(w)=\ell(\pi)$, where $\pi\in S_k$ such that $\pi(\inc_{\lambda}(w))=w$, and $\ell(\pi)$ is the length of the permutation $\pi$. In particular, $\inc_\lambda(w)$ is the unique element of $\Sym_\lambda(w)$ such that $\ell_{\lambda}(\inc_{\lambda}(w))=0$. Moreover, for any $w$ that can appear as the border of some filling in $\Tab(\lambda)$,
\begin{equation}\label{eq:perm}
\sum_{v\in\Sym_{\lambda}(w)}t^{\ell_{\lambda}(v)}=[m_1]_t!\cdots [m_k]_t!=\perm_{\lambda}(t),
\end{equation}
where $\lambda=\langle 1^{m_1}2^{m_2}\cdots k^{m_k}\rangle$. See \cref{ex:prob tau} for an example of $\Sym_{\lambda}$, $\inc_\lambda$, and $\ell_{\lambda}$.
\end{defn}

Observe that $\sigma\in\Tab(\lambda)$ with $\Top(\sigma)=w$ is $\coquinv$-sorted if and only if $w=\inc_{\lambda}(w)$.

\begin{defn}
Let $\sigma\in\Tab(\lambda)$, and let $1\leq i\leq \ell(\lambda)$ be $\lambda$-compatible with $L=\lambda_i=\lambda_{i+1}$. Define the operator $\widetilde{\op}_i(\sigma)=\widetilde{\op}_i^{(L)}(\sigma)$ where for $1\leq r\leq \lambda_i$, $\widetilde{\op}_i^{(r)}$ acts on $\sigma$ by probabilistically mapping to a set of tableaux $\widetilde{\op}_i^{(r)}(\sigma)\subseteq \Tab(\lambda)$, based on the following cases for the entries $a=\sigma(r,i)$, $b=\sigma(r,i+1)$, $c=\sigma(r-1,i)$, and $d=\sigma(r-1,i+1)$ (assume all configurations are $\quinv$-non-attacking):
\begin{center}
(i) \raisebox{-45pt}{\begin{tikzpicture}[scale=0.4]\cell{-1}0{}\cell{-1}1{}
\node at (-.5,1.4) {$a$};\node at (.5,1.5) {$b$};
\node at (-.5,.5) {$\emptyset$};\node at (.5,.5) {$\emptyset$};
\node at (-1.5,1.5) {\tiniest{$1$}};\node at (-1.5,.5) {\tiniest{$0$}};\node at (-.4,2.5) {\tiniest{$i$}};\node at (.6,2.5) {\tiniest{$i+1$}};
\draw[->] (0,-.1)--(0,-.9);
\cell{2}0{}\cell{2}1{}
\node at (-.5,-1.5) {$b$};\node at (.5,-1.6) {$a$};
\node at (-.5,-2.5) {$\emptyset$};\node at (.5,-2.5) {$\emptyset$};
\node at (-1.5,-1.5) {\tiniest{$1$}};\node at (-1.5,-2.5) {\tiniest{$0$}};
\draw[blue] (-.9,-1.9) rectangle (.9,-1.1);
\end{tikzpicture}}
\quad(ii) \raisebox{-45pt}{\begin{tikzpicture}[scale=0.4]\cell00{}\cell01{}\cell{-1}0{}\cell{-1}1{}
\node at (-.5,1.4) {$a$};\node at (.5,1.5) {$b$};
\node at (-.5,.5) {$b$};\node at (.5,.5) {$d$};
\node at (-1.5,1.5) {\tiniest{$r$}};\node at (-2,.5) {\tiniest{$r-1$}};\node at (-.5,2.5) {\tiniest{$i$}};\node at (.5,2.5) {\tiniest{$i+1$}};
\draw[->] (0,-.1)--(0,-.9);
\cell{2}0{}\cell{2}1{}
\cell{3}0{}\cell{3}1{}
\node at (-.5,-1.5) {$b$};\node at (.5,-1.6) {$a$};
\node at (-.5,-2.5) {$b$};\node at (.5,-2.5) {$d$};
\node at (-1.5,-1.5) {\tiniest{$r$}};\node at (-2,-2.5) {\tiniest{$r-1$}};
\draw[blue] (-.9,-1.9) rectangle (.9,-1.1);
\end{tikzpicture}} or\ \ 
\raisebox{-45pt}{\begin{tikzpicture}[scale=0.4]
\cell00{}\cell01{}\cell{-1}0{}\cell{-1}1{}
\node at (-.5,1.4) {$a$};\node at (.5,1.5) {$b$};
\node at (-.5,.4) {$c$};\node at (.5,.5) {$d$};
\node at (-.5,2.5) {\tiniest{$i$}};\node at (.5,2.5) {\tiniest{$i+1$}};
\draw[->] (0,-.1)--(0,-.9);
\cell{2}0{}\cell{2}1{}
\cell{3}0{}\cell{3}1{}
\node at (-.5,-1.5) {$b$};\node at (.5,-1.6) {$a$};
\node at (-.5,-2.6) {$c$};\node at (.5,-2.5) {$d$};
\draw[blue] (-.9,-1.9) rectangle (.9,-1.1);
\end{tikzpicture}}
\quad (iii) \raisebox{-45pt}{\begin{tikzpicture}[scale=0.4]
\cell00{}\cell01{}\cell{-1}0{}\cell{-1}1{}
\node at (-.5,1.4) {$a$};\node at (.5,1.5) {$b$};
\node at (-.5,.4) {$c$};\node at (.5,.5) {$b$};
\node at (-1.5,1.5) {\tiniest{$r$}};\node at (-2,.5) {\tiniest{$r-1$}};\node at (-.5,2.5) {\tiniest{$i$}};\node at (.5,2.5) {\tiniest{$i+1$}};
\draw[->] (0,-.1)--(0,-.9);
\cell{2}0{}\cell{2}1{}
\cell{3}0{}\cell{3}1{}
\node at (-.5,-1.5) {$b$};\node at (.5,-1.6) {$a$};
\node at (-.5,-2.5) {$b$};\node at (.5,-2.6) {$c$};
\node at (-1.5,-1.5) {\tiniest{$r$}};\node at (-2,-2.5) {\tiniest{$r-1$}};
\draw[blue] (-.9,-1.9) rectangle (.9,-1.1);
\draw[blue] (-.9,-2.9) rectangle (.9,-2.1);
\end{tikzpicture}} or\ \ 
\raisebox{-45pt}{\begin{tikzpicture}[scale=0.4]
\cell00{}\cell01{}\cell{-1}0{}\cell{-1}1{}
\node at (-.5,1.4) {$a$};\node at (.5,1.5) {$b$};
\node at (-.5,.5) {$b$};\node at (.5,.5) {$d$};
\node at (-.5,2.5) {\tiniest{$i$}};\node at (.5,2.5) {\tiniest{$i+1$}};
\draw[->] (0,-.1)--(0,-.9);
\cell{2}0{}\cell{2}1{}
\cell{3}0{}\cell{3}1{}
\node at (-.5,-1.5) {$b$};\node at (.5,-1.6) {$a$};
\node at (-.5,-2.5) {$d$};\node at (.5,-2.6) {$c$};
\draw[blue] (-.9,-1.9) rectangle (.9,-1.1);
\draw[blue] (-.9,-2.9) rectangle (.9,-2.1);
\end{tikzpicture}} 
\quad (iv)\hspace{-0.2in} \raisebox{-45pt}{\begin{tikzpicture}[scale=0.4]
\cell00{}\cell01{}\cell{-1}0{}\cell{-1}1{}
\node at (-.5,1.4) {$a$};\node at (.5,1.5) {$b$};
\node at (-.5,.4) {$a$};\node at (.5,.5) {$d$};
\node at (-1.5,1.5) {\tiniest{$r$}};\node at (-2,.5) {\tiniest{$r-1$}};\node at (-.5,2.5) {\tiniest{$i$}};\node at (.5,2.5) {\tiniest{$i+1$}};
\draw[->] (-.5,-.1)--(-1.5,-.9);
\cell{2}{-1}{}\cell{2}{-2}{}
\cell{3}{-1}{}\cell{3}{-2}{}
\node at (-2.5,-1.5) {$b$};\node at (-1.5,-1.6) {$a$};
\node at (-2.5,-2.6) {$a$};\node at (-1.5,-2.5) {$d$};
\node at (-3.5,-1.5) {\tiniest{$r$}};\node at (-4,-2.5) {\tiniest{$r-1$}};
\draw[blue] (1.1-4,-1.9) rectangle (2.9-4,-1.1);
\draw[->] (.5,-.1)--(1.5,-.9);
\cell{2}{2}{}\cell{2}{3}{}
\cell{3}{2}{}\cell{3}{3}{}
\node at (1.5,-1.5) {$b$};\node at (2.5,-1.6) {$a$};
\node at (1.5,-2.5) {$d$};\node at (2.5,-2.6) {$a$};
\draw[blue] (1.1,-1.9) rectangle (2.9,-1.1);
\draw[blue] (1.1,-2.9) rectangle (2.9,-2.1);
\end{tikzpicture}} 
\end{center}

\begin{itemize}
\item[(i)] If $r=1$, $\widetilde{\op}_i^{(r)}$ produces $\T_i^{(r)}(\sigma)$ with probability 1.
\item[(ii)] If $b=c$ (left) or $\Qc(a,c,d)=\Qc(b,c,d)$ and $\{a,b\}\cap \{c,d\}=\emptyset$ (right), $\widetilde{\op}_i^{(r)}$ produces $\T_i^{(r)}(\sigma)$ with probability $1$.
\item[(iii)] If $b=d$ (left) or $\Qc(a,c,d)\neq\Qc(b,c,d)$ and $\{a,b\}\cap \{c,d\}=\emptyset$ (right), $\widetilde{\op}_i^{(r)}$ produces the set $\widetilde{\op}_i^{(r-1)}(\T_i^{(r)}(\sigma))$ with probability 1.
\item[(iv)] If $a=c$, set $A:=\rarm(r,i+1)+1$ and $\ell:=\leg(r,i)+1=L-r+1$. Then $\widetilde{\op}_i^{(r)}$ produces 
\begin{itemize}[label={$\bullet$}]
\item$\T_i^{(r)}(\sigma)$ with probability $\displaystyle(q^{\ell}t^{A})^{\Qc(b,a,d)}\frac{1-t}{1-q^{\ell}t^{A+1}}$, and 
\item the set $\widetilde{\op}_i^{(r-1)}(\T_i^{(r)}(\sigma))$ with probability $\displaystyle t^{1-\Qc(b,a,d)}\frac{1-q^{\ell}t^{A}}{1-q^{\ell}t^{A+1}}$.
\end{itemize}
\end{itemize}
Define the operator $\widetilde{\op}_i$ to act on $\sigma\in\Tab(\lambda)$ by producing the set of tableaux $\widetilde{\op}_i(\sigma):=\widetilde{\op}^{(L)}_i(\sigma)$. Suppose $\sigma'\in\widetilde{\op}_i(\sigma)$ such that $\sigma'=\T_i^{[k,L]}(\sigma)$. Let $\prob_i^{(r)}(\sigma,\sigma')$ denote the probability that $\widetilde{\op}^{(r)}_i$ applied to $\T_i^{[r+1,L]}(\sigma)$ produces $\widetilde{\op}^{(r-1)}_i\left(\T_i^{[r,L]}(\sigma)\right)$ when $r>k$ and $\T_i^{[r,L]}(\sigma)$ when $r=k$, and call this the \emph{transition probability at the $r$'th row}. Then the probability that $\widetilde{\op}_i$ applied to $\sigma$ produces $\sigma'$ is
\[
\prob_i(\sigma, \sigma')=\prod_{r=k}^{L} \prob_i^{(r)}(\sigma, \sigma').
\]
Equivalently, $\prob_i^{(r)}(\sigma, \sigma')$ can be computed by comparing the entries of rows $r,r-1$ in columns $i,i+1$ of $\sigma$ and $\sigma'$ and identifying which of the cases (i)-(iv) applies. Note that we may drop the index $i$ in $\prob_i(\sigma,\sigma')$ since it is uniquely determined. See \cref{ex:prob tau} for a sample computation.
\end{defn}

\begin{lemma}\label{lem:welldefined}
Let $\sigma\in\Tab(\lambda)$ and let $i$ be $\lambda$-compatible. Then
\[
\sum_{\sigma'\in\widetilde{\op}_i(\sigma)}\prob_i(\sigma,\sigma')=1.
\]
\end{lemma}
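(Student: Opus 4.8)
The plan is to read $\widetilde{\op}_i$ as a finite random process that scans columns $i$ and $i+1$ of $\sigma$ from the top row $r=L$ downward: at each row $r$ it visits, the process either \emph{stops}, emitting the tableau $\T_i^{[r,L]}(\sigma)$, or it \emph{continues} to row $r-1$ after performing the swap $\T_i^{(r)}$. The assertion of the lemma is that this process halts with total probability $1$. To set up the induction, for $1\le r\le L$ put $\Sigma_r:=\T_i^{[r+1,L]}(\sigma)$, so that $\Sigma_L=\sigma$ and $\T_i^{(r)}(\Sigma_r)=\Sigma_{r-1}$; unwinding the recursion $\widetilde{\op}_i(\sigma)=\widetilde{\op}_i^{(L)}(\sigma)$ shows that the only one-row operators ever evaluated are the $\widetilde{\op}_i^{(r)}$ applied to $\Sigma_r$, and I would prove by induction on $r$, with base case $r=1$, that the total probability mass $\mu(r)$ of the distribution $\widetilde{\op}_i^{(r)}(\Sigma_r)$ is $1$. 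Taking $r=L$ then yields the lemma, provided one notes that distinct halting rows give distinct output tableaux so that summing over the \emph{set} $\widetilde{\op}_i(\sigma)$ reproduces $\mu(L)$; this holds because columns $i$ and $i+1$ differ in every row of a $\quinv$-non-attacking filling, so the tableaux $\T_i^{[k,L]}(\sigma)$ are pairwise distinct as $k$ ranges over $\{1,\dots,L\}$.

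Two ingredients feed the induction. The first is the only algebraic identity involved, hidden in case~(iv), where the one genuine coin is flipped: writing $e:=\Qc(b,a,d)\in\{0,1\}$, $A:=\rarm(r,i+1)+1$, and $\ell:=L-r+1$, the two probabilities listed there sum to $1$, since clearing the denominator $1-q^{\ell}t^{A+1}$ turns their sum into $(q^{\ell}t^{A})^{e}(1-t)+t^{1-e}(1-q^{\ell}t^{A})$, which equals $(1-t)+t(1-q^{\ell}t^{A})=1-q^{\ell}t^{A+1}$ when $e=0$ and equals $q^{\ell}t^{A}(1-t)+(1-q^{\ell}t^{A})=1-q^{\ell}t^{A+1}$ when $e=1$. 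The second ingredient is that cases~(i)--(iv) are exhaustive on $\quinv$-non-attacking local configurations: the pairs $\{(r,i),(r,i+1)\}$, $\{(r,i),(r-1,i+1)\}$, and $\{(r-1,i),(r-1,i+1)\}$ are each $\quinv$-attacking, which forces $a\ne b$, $a\ne d$, and $c\ne d$, and then a short case split — on whether $a=c$, else whether $b=c$, else whether $b=d$, else whether $\Qc(a,c,d)=\Qc(b,c,d)$ — assigns each configuration to a case (the one overlap, $a=c$ together with $b=d$, is resolved by the precedence fixed in the definition, and in any event both of its branches carry mass $1$, so the conclusion is unaffected).

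With these in hand the induction is routine. The base case $r=1$ is case~(i): one output with probability $1$, so $\mu(1)=1$. For $r>1$: in case~(ii) there is again a single output with probability $1$; in case~(iii) the process passes deterministically to $\widetilde{\op}_i^{(r-1)}(\Sigma_{r-1})$, whence $\mu(r)=\mu(r-1)$; and in case~(iv) the process emits the fixed tableau $\Sigma_{r-1}$ with probability $p^{\mathrm{stop}}$ and passes to $\widetilde{\op}_i^{(r-1)}(\Sigma_{r-1})$ with probability $p^{\mathrm{cont}}$, whence $\mu(r)=p^{\mathrm{stop}}+p^{\mathrm{cont}}\mu(r-1)$. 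Since $\mu(r-1)=1$ by induction and $p^{\mathrm{stop}}+p^{\mathrm{cont}}=1$ by the case-(iv) identity, $\mu(r)=1$; here one also uses that $\Sigma_{r-1}$ is itself never an output of $\widetilde{\op}_i^{(r-1)}(\Sigma_{r-1})$ (every such output has row $r-1$ swapped relative to $\Sigma_{r-1}$), so the mass is genuinely additive over the output set. Finally, $\prob_i(\sigma,\sigma')=\prod_{r=k}^{L}\prob_i^{(r)}(\sigma,\sigma')$ is exactly the probability that the run halts at row $k$ with output $\sigma'=\T_i^{[k,L]}(\sigma)$, so $\sum_{\sigma'\in\widetilde{\op}_i(\sigma)}\prob_i(\sigma,\sigma')=\mu(L)=1$.

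I do not expect a real obstacle here: morally this is the statement that $\widetilde{\op}_i$ is a bona fide stochastic operator, the probabilistic counterpart of the well-definedness of the flip operators $\op_i$. The two points that require care are the bookkeeping that (i)--(iv) partition the $\quinv$-non-attacking local configurations — using only the forced inequalities $a\ne b$, $a\ne d$, $c\ne d$ — and the verification that $\prob_i(\sigma,\sigma')$, defined as a product over rows, really is the survival-then-stop probability of the scan, so that summing it over the output set recovers $\mu(L)$; the case-(iv) identity itself is a two-line computation.
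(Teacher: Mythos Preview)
Your proof is correct and follows the same approach as the paper: the key step is the case~(iv) identity showing the two branch probabilities sum to $1$, from which the lemma follows by the recursive structure of $\widetilde{\op}_i$. The paper's proof is terser---it records only the case~(iv) computation and calls the rest a ``straightforward verification''---whereas you spell out the induction and add careful bookkeeping (exhaustiveness of cases~(i)--(iv) under the non-attacking constraints $a\ne b$, $a\ne d$, $c\ne d$, and distinctness of the output tableaux $\T_i^{[k,L]}(\sigma)$) that the paper leaves implicit.
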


\begin{proof}
This follows from a straightforward verification that the sum of the transition probabilities at the $r$'th row when $\widetilde{\op}_i^{(r)}$ is applied is equal to 1. Indeed, this only needs to be checked for case (iv), where the total probability is
\begin{multline*}
\frac{(q^{\ell}t^{A})^{\Qc(a,c,d)}(1-t)}{1-q^{\ell}t^{A+1}}+\frac{t^{1-\Qc(a,c,d)}(1-q^{\ell}t^{A})}{1-q^{\ell}t^{A+1}}
=\begin{cases}
\frac{q^{\ell}t^{A}-q^{\ell}t^{A+1}+1-q^{\ell}t^{A}}{1-q^{\ell}t^{A+1}},& \Qc(a,c,d)=1\\
\frac{1-t+t-q^{\ell}t^{A+1}}{1-q^{\ell}t^{A+1}},& \Qc(a,c,d)=0
\end{cases}\quad=1,
\end{multline*}
where we write $A=\rarm(r,i+1)+1$ and $\ell:=\leg(r,i)+1=L-r+1$.
\end{proof}

\begin{example}\label{ex:prob tau}
Consider $\lambda=(4,4,2,2,1)$ and let 
\[
\sigma=\raisebox{15pt}{\tableau{4&1\\4&6\\3&6&2&1\\3&2&5&4&7}}\in\Tab(\lambda).
\] 
The $\lambda$-compatible indices are $1$ and $3$. The border is $\Top(\sigma)=4\,1\,|\,2\,1\,|\,7$ (the bar delimits the $\lambda$-comparable blocks), $\inc_{\lambda}(\Top(\sigma))=1\,4\,|\,1\,2\,|\,7$, and
\[
\Sym_{\lambda}(\Top(\sigma))=\{4\,1\,|\,2\,1\,|\,7,\ 1\,4\,|\,2\,1\,|\,7,\ 4\,1\,|\,1\,2\,|\,7,\ 1\,4\,|\,1\,2\,|\,7\}
\] 
with corresponding lengths (with respect to $\lambda$) equal to $2, 1, 1, 0$, respectively. 
$\widetilde{\op}_3(\sigma)$ produces the set $\{\T_3^{(2)}(\sigma)\}$ (case (ii)). We show the details for computing $\widetilde{\op}_1(\sigma)$: 
\begin{align*}
\widetilde{\op}_1(\sigma)=\widetilde{\op}^{(4)}_1(\sigma)&=\left\{\T_1^{(4)}(\sigma)\right\}\bigcup \widetilde{\op}_1^{(3)}(\T_1^{(4)}(\sigma))&\qquad \text{(case (iv))}\\
 \widetilde{\op}_1^{(3)}(\T_1^{(4)}(\sigma))&=\widetilde{\op}_1^{(2)}(\T_1^{[3,4]}(\sigma))&\qquad \text{(case (iii))}\\
  \widetilde{\op}_1^{(2)}(\T_1^{[3,4]}(\sigma))&=\left\{\T_1^{[3,4]}(\sigma)\right\}\bigcup \widetilde{\op}_1^{(1)}(\T_1^{[2,4]}(\sigma))&\qquad \text{(case (iv))}\\ 
  \widetilde{\op}_1^{(1)}(\T_1^{[2,4]}(\sigma))&=\left\{\T_1^{[1,4]}(\sigma))\right\}&\qquad \text{(case (i))}.
  \end{align*}
  Thus we get 
  \[
   \widetilde{\op}_1(\sigma)=\left\{\T_1^{(4)}(\sigma),\T_1^{[2,4]}(\sigma),\T_1^{[1,4]}(\sigma)\right\}=\left\{ \raisebox{15pt}{\tableau{1&4\\4&6\\3&6&2&1\\3&2&5&4&7}}\,,\ \raisebox{15pt}{\tableau{1&4\\6&4\\6&3&2&1\\3&2&5&4&7}}\,,\ \raisebox{15pt}{\tableau{1&4\\6&4\\6&3&2&1\\2&3&5&4&7}} \right\}.
   \]
   Labeling the tableaux in $\widetilde{\op}_1(\sigma)$ as $\sigma_1,\sigma_2,\sigma_3$, due to the fact that $\Qc(1,4,6)=0$ and $\Qc(6,3,2)=1$, 
    \begin{align*}
   \prob_1(\sigma,\sigma_1)&=\prob_1^{(4)}(\sigma,\sigma_1)=\frac{1-t}{1-qt^2},\\
   \prob_1(\sigma,\sigma_2)&=\prod_{r=2}^4\prob_1^{(r)}(\sigma,\sigma_2)=\frac{t(1-qt)}{1-qt^2}\cdot 1 \cdot \frac{q^3t^4(1-t)}{1-q^3t^5},\\
   \prob_1(\sigma,\sigma_2)&=\prod_{r=1}^4\prob_1^{(r)}(\sigma,\sigma_3)=\frac{t(1-qt)}{1-qt^2}\cdot 1 \cdot \frac{1-q^3t^4}{1-q^3t^5} \cdot 1.\\
   \end{align*}
   Adding these together, we confirm that $\prob_1(\sigma,\sigma_1)+\prob_1(\sigma,\sigma_2)+\prob_1(\sigma,\sigma_3)=1$. 
\end{example}

\begin{prop}\label{prop:balance}
Let $\lambda$ be a partition and let $1\leq i\leq \ell(\lambda)$ be $\lambda$-compatible. Let $\sigma\in\Tab(\lambda)$  with $\Top(\sigma)=w$, such that $w_i<w_{i+1}$, 
and let $\sigma' \in\widetilde{\op}_i(\sigma)$. Then $\Top(\sigma')=s_i\cdot w$ and
\begin{equation}\label{eq:prob balance}
\wt(\sigma')\prob(\sigma',\sigma)=t\wt(\sigma)\prob(\sigma,\sigma').
\end{equation}
\end{prop}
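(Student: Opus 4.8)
The plan is to reduce the statement to a purely local, row-by-row verification, exploiting the recursive structure of $\widetilde{\op}_i$. First I would establish the claim $\Top(\sigma') = s_i \cdot w$: since $\widetilde{\op}_i$ only swaps entries between columns $i$ and $i+1$ in some contiguous band of rows $[k, L]$, the topmost entries (row $L = \lambda_i = \lambda_{i+1}$) are swapped in every branch of cases (i)--(iv) that reaches row $L$ — indeed case (iv), which is the generic case at the top row since $w_i < w_{i+1}$ does not force equality, always applies $\T_i^{(L)}$ — so the border is transposed at positions $i, i+1$ and unchanged elsewhere. I would note that $\widetilde{\op}_i$ is ``almost'' an involution in the sense of the earlier $\op_i$: because $w_i < w_{i+1}$, the operator $\widetilde{\op}_i$ applied to $\sigma$ necessarily enters case (iv) (or (i)) at the top, while $\widetilde{\op}_i$ applied to $\sigma'$, which has $w'_i > w'_{i+1}$ at the top, will recognize the reverse transition; this matching is what makes $\prob(\sigma',\sigma)$ well-defined and nonzero precisely when $\sigma \in \widetilde{\op}_i(\sigma')$, which holds here.

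The core of the argument is to factor \eqref{eq:prob balance} over rows. Writing $\sigma' = \T_i^{[k,L]}(\sigma)$, I would express $\prob(\sigma,\sigma') = \prod_{r=k}^{L} \prob^{(r)}(\sigma,\sigma')$ and $\prob(\sigma',\sigma) = \prod_{r=k}^{L}\prob^{(r)}(\sigma',\sigma)$ via the given product formula, and observe that $\wt(\sigma)/\wt(\sigma') = q^{\maj(\sigma)-\maj(\sigma')} t^{\coquinv(\sigma)-\coquinv(\sigma')} x^{\sigma}/x^{\sigma'}$, where $x^{\sigma} = x^{\sigma'}$ since the multiset of entries is preserved. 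By \cref{thm:tau}(iii) the major indices agree, so $\wt(\sigma)/\wt(\sigma') = t^{\coquinv(\sigma) - \coquinv(\sigma')}$ — wait, more carefully, $\wt$ here includes the $\rarm$-product prefactor as well, so I must track that too: the prefactor changes only at cells $u$ where the ``$\sigma(u) \neq \sigma(\South(u))$'' condition flips between $\sigma$ and $\sigma'$ in columns $i, i+1$, and these are exactly the cells governed by case (iv) (where $a = c$, i.e. equality holds in $\sigma$ but is broken in $\sigma'$, or vice versa). So the identity \eqref{eq:prob balance} becomes a telescoping product over $r = k, \dots, L$ of local identities of the form
\[
\frac{W^{(r)}(\sigma')\, \prob^{(r)}(\sigma',\sigma)}{W^{(r)}(\sigma)\, \prob^{(r)}(\sigma,\sigma')} = t^{\,\delta_r},
\]
where $W^{(r)}$ collects the row-$r$ contribution to $\wt$ (the $q^{\maj}$ part cancels row-wise by the same argument as in \cref{thm:tau}(iii), the $x^\sigma$ part cancels, and the $\rarm$-prefactor contributes only in case (iv)), and $\sum_r \delta_r = 1$.

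Then I would go through cases (i)--(iv) and check each local identity. In cases (ii) and (iii), both $\prob^{(r)}(\sigma,\sigma')$ and $\prob^{(r)}(\sigma',\sigma)$ equal $1$, the prefactor is unaffected (no equality is created or destroyed), and — using that $\Qc$ behaves predictably under the swap $a \leftrightarrow b$ — the $\quinv$-contribution of the affected $L$-triple is the same for $\sigma$ and $\sigma'$ in cases (ii)--(iii) but changes by the swap in a way that contributes to the global $t$; I expect these rows to contribute $\delta_r = 0$. In case (i) ($r = 1$, the bottom degenerate row) the swap of the bottom entries flips exactly one degenerate $L$-triple's $\coquinv$ status, contributing the single factor of $t$, i.e. $\delta_1 = 1$. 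The real work — and the main obstacle — is case (iv): here I must verify that
\[
\frac{(q^{\ell}t^{A})^{\Qc(a,c,d)}(1-t)}{1-q^{\ell}t^{A+1}} \quad\text{vs.}\quad t^{1-\Qc(a,c,d)}\frac{1-q^{\ell}t^{A}}{1-q^{\ell}t^{A+1}}
\]
(the forward probabilities) and the corresponding reverse probabilities (with $\Qc(a,c,d)$ replaced by $\Qc(b,c,d)$, or rather by the reversed configuration's $\Qc$-value, and $A = \rarm$ possibly reindexed) combine with the ratio of $\rarm$-prefactors $\tfrac{1-t}{1-q^{\ell}t^{A+1}}$ to give exactly the right power of $t$; I'd organize this by splitting on whether the branch taken is $\T_i^{(r)}$ (terminating) or $\widetilde{\op}_i^{(r-1)}(\T_i^{(r)}(\sigma))$ (recursing), and in each sub-case compare the two Boltzmann-type weights, using the already-proved \cref{lem:welldefined} as a consistency check. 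The bookkeeping of which $\Qc$-value ($a,c,d$ versus $b,c,d$) appears in the forward versus backward transition, and matching the $\leg$/$\rarm$ indices across the two directions, is where care is most needed; once the case (iv) identity is in hand, multiplying all rows gives \eqref{eq:prob balance}.
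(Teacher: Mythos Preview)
Your overall strategy---factorize $\wt$ and $\prob$ row by row and verify a local identity at each row---is exactly the paper's. However, three of your local claims are wrong and would make the execution fail.

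First, $\maj$ is \emph{not} preserved by $\widetilde{\op}_i$. \cref{thm:tau}(iii) is about $\op_i$, not $\widetilde{\op}_i$; the latter can terminate (first branch of case~(iv), equivalently case~(ii) with $b=c$) at a row where a descent is created or destroyed. For instance, with $a<d<b$ the row-$r$ transition from $(a,b)$ over $(b,d)$ to $(b,a)$ over $(b,d)$ removes the unique descent in columns $i,i{+}1$ at that row, shifting $\maj$ by $\ell=\leg(r,i)+1$. The $q^{\ell}$ factor in the case-(iv) probability is precisely what compensates; you cannot split off the $q$-part and must track it together with the $t$-part and the $\rarm$-prefactor inside a single row-weight $\wt^{(r)}$.

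Second, the factor of $t$ does not come from case~(i). Case~(i) is reached only along the branch that recurses all the way to row~$1$; for a generic $\sigma'=\T_i^{[k,L]}(\sigma)$ with $k>1$ it never occurs, so it cannot supply a universal factor. The $t$ comes instead from the degenerate $L$-triples, which in the $\quinv$ setting live at the \emph{top} of the columns: swapping the top entries always increases $\ell_{\lambda}(\Top(\sigma))$ by one (since $w_i<w_{i+1}$), and the decomposition $\wt(\sigma)=t^{\ell_{\lambda}(\Top(\sigma))}\prod_{r\ge 2}\wt^{(r)}(\sigma)$ isolates this contribution. The correct local identity for every $r\ge 2$ is $\wt^{(r)}(\sigma')\,\prob^{(r)}_i(\sigma',\sigma)=\wt^{(r)}(\sigma)\,\prob^{(r)}_i(\sigma,\sigma')$, with $\delta_r=0$ throughout.

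Third, your assumption that in cases (ii) and (iii) both forward and reverse transition probabilities equal $1$, and that the prefactor is unaffected, fails: the case labels are not symmetric under $\sigma\leftrightarrow\sigma'$. After swapping row $r$, a case-(ii) configuration with $b=c$ becomes a case-(iv) configuration with $a'=c'$, so the reverse probability is the nontrivial case-(iv) one, and an equality $\sigma'(r,i)=\sigma'(r-1,i)$ \emph{is} created (altering the $\rarm$-prefactor). The detailed balance at such a row works because the nontrivial weight ratio on the case-(ii) side exactly equals the nontrivial probability on the reverse case-(iv) side; this pairing of (ii)$\leftrightarrow$(iv) and (iii)$\leftrightarrow$(iv) is the heart of the computation.
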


\begin{proof}
By definition, $\Top(\sigma')=s_i\cdot w$, since the swap $\T_i^{\lambda_i}$, which exchanges the topmost entries of columns $i$, $i+1$, is necessarily applied as part of $\widetilde{\op}$. 

It remains to show that \eqref{eq:prob balance} holds. Denote by $\wt^{(r)}(\sigma)$ the weight contribution of the pair of rows $r,r-1$ to $\wt(\sigma)$  for $2\leq r\leq \lambda_1$, not counting the $\quinv$ coming from any degenerate $L$-triples. Observe that the degenerate $L$-triples that contribute to $\coquinv(\sigma)$ are precisely the pairs of entries in $\Top(\sigma)$ that are comparable with respect to $\lambda$, and that form an inversion, and hence are counted by $\ell_{\lambda}(\Top(\sigma))$. Thus $t^{\ell_{\lambda}(\Top(\sigma))}$ is the contribution of the degenerate $L$-triples to $\quinv(\sigma)$. Then we may decompose $\wt(\sigma)$ as
\[
\wt(\sigma)=t^{\ell_{\lambda}(\Top(\sigma))}\prod_{r=2}^{\lambda_1}\wt^{(r)}(\sigma).
\] 
Note that \emph{Case (i)} is a specific instance of a degenerate triple when $\lambda_i=\lambda_{i+1}=1$. We shall compare $\wt^{(r)}(\sigma)$ to $\wt^{(r)}(\sigma')$ for $\sigma'\in \widetilde{\op}_i(\sigma)$ for $2\leq r\leq \lambda_i$. We base our computation on the following. First, we only need to consider the contribution to $\coinv$ from the pairs of cells $(r,i), (r-1,i)$ and $(r,i+1), (r-1,i+1)$. Second, we only need to consider the contribution to $\coquinv$ from the $L$-triples $(r,i), (r-1,i), (r-1,i+1)$ and $(r,i), (r-1,i), (r-1,j)$ where $j>i+1$ (i.e. the cell $(r-1,j)\in\Rarm(r,i+1)$). Finally, the arm and leg factor will only depend on the pairs of cells $(r,i), (r-1,i)$ and $(r,i+1), (r-1,i+1)$. Let $L:=\lambda_i$, $\ell:=\leg(r,i)+1=L-r+1$ and $A:=\rarm(r,i+1)+1$. For $r\geq 2$, we have the following cases, based on the entries $a=\sigma(r,i), b=\sigma(r,i+1), c=\sigma(r-1,i)$, and $d=\sigma(r-1,i+1)$. 

\noindent\emph{Case (ii):} $\sigma'=\T_i^{[r,L]}(\sigma)$. If $\{a,b\}\cap \{c,d\}=\emptyset$, we have the following configuration when restricted to the rows $r-1,r$ and columns $i,i+1$:
\[
\raisebox{-15pt}{\begin{tikzpicture}[scale=0.4]\cell00{$c$}\cell01{$d$}\cell{-1}0{$a$}\cell{-1}1{$b$}\node at (-1.5,1.5) {\tiniest{$r$}};\node at (-2,.5) {\tiniest{$r-1$}};\node at (-.5,-.5) {\tiniest{$i$}};\node at (.5,-.5) {\tiniest{$i+1$}};\end{tikzpicture}}\quad\xrightarrow{\T_i^{(r)}}\quad\raisebox{-15pt}{\begin{tikzpicture}[scale=0.4]\cell00{$c$}\cell01{$d$}\cell{-1}0{$b$}\cell{-1}1{$a$}\node at (-.5,-.5) {\tiniest{$i$}};\node at (.5,-.5) {\tiniest{$i+1$}};\end{tikzpicture}}
\]

The arm and leg factors are identical in the two tableaux. We analyze the contribution to $\wt^{(r)}(\sigma)$ of these four cells. Note that $\Qc(a,c,d)=\Qc(b,c,d)$ implies that the cyclic orders $a<c<b<d$ and $a<d<b<c$ cannot occur. This yields the five possible (non-cyclic) orderings shown in the columns of the table below (since $\T_i^{(r)}$ is an involution, we assume without loss of generality that $a<b$).

\begin{center}
\begin{tabular}{l||c|c|c|c|c}
order&$a,b<c,d$&$a<c,d<b$&$c<a,b<d$&$d<a,b<c$&$c,d<a,b$\\\hline
$\wt^{(r)}(\sigma)$&$1$&$q^{\ell}$&$q^{\ell}$&$q^{\ell}$&$q^{2\ell}$\\\hline
$\wt^{(r)}(\sigma')$&$1$&$q^{\ell}$&$q^{\ell}$&$q^{\ell}$&$q^{2\ell}$\\\hline
ratio&$1$&$1$&$1$&$1$&$1$
\end{tabular}
\end{center}

Next we compare the respective contributions to $\wt^{(r)}(\sigma)$ coming from all $L$-triples involving the four cells together with some cell $x=(r-1,j)\in\Rarm(r,i+1)$ (with $j>i+1$), and we denote $f=\sigma(x)$, as in the configuration below, restricted to the rows $r-1,r$ and columns $i,i+1,j$. 
\[
\raisebox{-15pt}{\begin{tikzpicture}[scale=0.4]\cell00{$c$}\cell01{$d$}\cell{-1}0{$a$}\cell{-1}1{$b$}\cell{0}4{$f$}\node at (2,.5) {$\cdots$};\node at (-1.5,1.5) {\tiniest{$r$}};\node at (-2,.5) {\tiniest{$r-1$}};\node at (-.5,-.5) {\tiniest{$i$}};\node at (.5,-.5) {\tiniest{$i+1$}};\node at (3.5,-.5) {\tiniest{$j$}};\end{tikzpicture}}\quad\xrightarrow{\T_i^{(r)}}\quad\raisebox{-15pt}{\begin{tikzpicture}[scale=0.4]\cell00{$c$}\cell01{$d$}\cell{-1}0{$b$}\cell{-1}1{$a$}\cell{0}4{$f$}\node at (2,.5) {$\cdots$};\node at (-.5,-.5) {\tiniest{$i$}};\node at (.5,-.5) {\tiniest{$i+1$}};\node at (3.5,-.5) {\tiniest{$j$}};\end{tikzpicture}}
\]

Since cyclic order is sufficient to determine whether a $L$-triple is a $\quinv$ triple, we organize the table based on the relative order of $f$ with respect to $a,b,c,d$, which are cyclically ordered according to $\Qc(a,c,d)=\Qc(b,c,d)$. We represent the cyclic order of a set of distinct entries by writing them in a circle that is intended to be read clockwise, making the following correspondence:
\[
\raisebox{-15pt}{\begin{tikzpicture}\circleU cba\end{tikzpicture}} \longrightarrow\ \Qc(a,b,c)=0\qquad\qquad\qquad \raisebox{-15pt}{\begin{tikzpicture}\circleU bca\end{tikzpicture}}\longrightarrow\ \Qc(a,b,c)=1
\]

\begin{table}[H]
\caption{\ }
\begin{center}
\begin{tabular}{l||c|c|c||c|c}
order&$\begin{tikzpicture}\circleU {{a,b}}{{c,d}}f\end{tikzpicture}$&$\begin{tikzpicture}\circleU {{a,b}}f{{c,d}}\end{tikzpicture}$&$\begin{tikzpicture}\circleS {{a,b}}cfd\end{tikzpicture}$&$\begin{tikzpicture}\circleS {{a,b}}dfc\end{tikzpicture}$&$\begin{tikzpicture}\circleS afb{{c,d}}\end{tikzpicture}$
\\\hline
$\wt^{(r)}(\sigma)$&$t^2$&$1$&$t$&$t$&$t$\\\hline
$\wt^{(r)}(\sigma')$&$t^2$&$1$&$t$&$t$&$t$\\\hline
ratio&$1$&$1$&$1$&$1$&$1$
\end{tabular}
\end{center}
\label{table:ii}
\end{table}

Since this is true for all $x\in\Rarm(r,i+1)$, we have $\wt^{(r)}(\sigma)=\wt^{(r)}(\sigma')$. 

Next we analyze the case where $b=c$. This is the following configuration when restricted to the rows $r-1,r$ and columns $i,i+1$:
\[
\raisebox{-15pt}{\begin{tikzpicture}[scale=0.4]\cell00{$b$}\cell01{$d$}\cell{-1}0{$a$}\cell{-1}1{$b$}\node at (-1.5,1.5) {\tiniest{$r$}};\node at (-2,.5) {\tiniest{$r-1$}};\node at (-.5,-.5) {\tiniest{$i$}};\node at (.5,-.5) {\tiniest{$i+1$}};\end{tikzpicture}}\quad\xrightarrow{\T_i^{(r)}}\quad\raisebox{-15pt}{\begin{tikzpicture}[scale=0.4]\cell00{$b$}\cell01{$d$}\cell{-1}0{$b$}\cell{-1}1{$a$}\node at (-.5,-.5) {\tiniest{$i$}};\node at (.5,-.5) {\tiniest{$i+1$}};\end{tikzpicture}}
\]
We compare the contribution to $\wt^{(r)}(\sigma)$ versus $\wt^{(r)}(\sigma')$ coming from these four cells in the table below, based on the relative orders of $a,b,d$, organized based on whether or not $\Qc(a,b,d)=0$:

\begin{center}
\begin{tabular}{l||c|c|c||c|c|c}
&\multicolumn{3}{c}{$\Qc(a,b,d)=0$}&\multicolumn{3}{c}{$\Qc(a,b,d)=1$}\\[.2cm]
order&$a<b<d$&$b<d<a$&$d<a<b$&$a<d<b$&$d<b<a$&$b<a<d$\\\hline
$\wt^{(r)}(\sigma)$&$1$&$q^{\ell}$&$q^{\ell}$&$q^{\ell}t$&$q^{2\ell}t$&$q^{\ell}t$\\\hline
$\wt^{(r)}(\sigma')$&$1$&$q^{\ell}$&$q^{\ell}$&$1$&$q^{\ell}$&$1$\\\hline
ratio&$1$&$1$&$1$&$q^{\ell}t$&$q^{\ell}t$&$q^{\ell}t$
\end{tabular}
\end{center}
Thus the ratio of the respective contributions is $1$ if $\Qc(a,b,d)=0$, and $q^{\ell}t$ otherwise.

Next we compare the respective contributions to $\wt^{(r)}(\sigma)$ coming from all $L$-triples involving the four cells together with some cell $x=(r-1,j)\in\Rarm(r,i+1)$ with $j>i+1$, and we denote $f=\sigma(x)$, as in the configuration below restricted to the rows $r-1,r$ and columns $i,i+1,j$. 
\[
\raisebox{-15pt}{\begin{tikzpicture}[scale=0.4]\cell00{$b$}\cell01{$d$}\cell{-1}0{$a$}\cell{-1}1{$b$}\cell{0}4{$f$}\node at (2,.5) {$\cdots$};\node at (-1.5,1.5) {\tiniest{$r$}};\node at (-2,.5) {\tiniest{$r-1$}};\node at (-.5,-.5) {\tiniest{$i$}};\node at (.5,-.5) {\tiniest{$i+1$}};\node at (3.5,-.5) {\tiniest{$j$}};\end{tikzpicture}}\quad\xrightarrow{\T_i^{(r)}}\quad\raisebox{-15pt}{\begin{tikzpicture}[scale=0.4]\cell00{$b$}\cell01{$d$}\cell{-1}0{$b$}\cell{-1}1{$a$}\cell{0}4{$f$}\node at (2,.5) {$\cdots$};\node at (-.5,-.5) {\tiniest{$i$}};\node at (.5,-.5) {\tiniest{$i+1$}};\node at (3.5,-.5) {\tiniest{$j$}};\end{tikzpicture}}
\]

Since cyclic order is sufficient to determine whether a $L$-triple is a $\quinv$ triple, we organize the table based on the relative order of $f$ with respect to $a,b,d$, and based on whether or not $\Qc(a,b,d)=0$.

\begin{center}
\begin{tabular}{l||c|c|c||c|c|c}
&\multicolumn{3}{c}{$\Qc(a,b,d)=0$}&\multicolumn{3}{c}{$\Qc(a,b,d)=1$}\\[.2cm]
order&$\begin{tikzpicture}\circleS dbaf\end{tikzpicture}$&$\begin{tikzpicture}\circleS dbfa\end{tikzpicture}$&$\begin{tikzpicture}\circleS dfba\end{tikzpicture}$&$\begin{tikzpicture}\circleS dabf\end{tikzpicture}$&$\begin{tikzpicture}\circleS dafb\end{tikzpicture}$&$\begin{tikzpicture}\circleS dfab\end{tikzpicture}$\\\hline
$\wt^{(r)}(\sigma)$&$1$&$t$&$t$&$t$&$t$&$t^2$\\\hline
$\wt^{(r)}(\sigma')$&$1$&$t$&$t$&$1$&$1$&$t$\\\hline
ratio&$1$&$1$&$1$&$t$&$t$&$t$
\end{tabular}
\end{center}

Since the above holds true for any cell $x\in\Rarm(r+1,i)$, from the two tables above we have that 
\[\frac{\wt^{(r)}(\sigma)}{\wt^{(r)}(\sigma')}=\frac{1-t}{1-q^{\ell}t^{A+1}}\times \begin{cases} 1,&\Qc(a,b,d)=0\\
q^{\ell}t^{A},&\Qc(a,b,d)=1.
\end{cases}
\]
Note that the pre-factor comes from the fact that $\sigma(r+1,i)\neq \sigma(r,i)$, whereas $\sigma'(r+1,i)= \sigma'(r,i)$. 

 \noindent\emph{Case (iii):} $\sigma'=\widetilde{\tau}_i^{(r-1)}(\T_i^{[r,L]}(\sigma))$. If $\{a,b\}\cap \{c,d\}=\emptyset$, we have the following configuration when restricted to the rows $r-1,r$ and columns $i,i+1$:
\[
\raisebox{-15pt}{\begin{tikzpicture}[scale=0.4]\cell00{$c$}\cell01{$d$}\cell{-1}0{$a$}\cell{-1}1{$b$}\node at (-1.5,1.5) {\tiniest{$r$}};\node at (-2,.5) {\tiniest{$r-1$}};\node at (-.5,-.5) {\tiniest{$i$}};\node at (.5,-.5) {\tiniest{$i+1$}};\end{tikzpicture}}\quad\xrightarrow{\T_i^{(r-1)}\circ\T_i^{(r)}}\quad\raisebox{-15pt}{\begin{tikzpicture}[scale=0.4]\cell00{$d$}\cell01{$c$}\cell{-1}0{$b$}\cell{-1}1{$a$}\node at (-.5,-.5) {\tiniest{$i$}};\node at (.5,-.5) {\tiniest{$i+1$}};\end{tikzpicture}}
\]

Both $\coinv$ and the arm and leg factors are identical between rows $r,r-1$ in $\sigma'$ and $\sigma$. Moreover, all $L$-triples are preserved with the exception of the triple $(b,d,c)$ replacing the triple $(a,c,d)$. Then we have $\Qc(b,d,c)=1-\Qc(b,c,d)=1-(1-\Qc(a,c,d))=\Qc(a,c,d)$. Thus $\wt^{(r)}(\sigma')=\wt^{(r)}(\sigma)$ in this case.
 
 Now we analyze the case where $b=d$, which is the following configuration when restricted to the rows $r-1,r$ and columns $i,i+1$:
\[
\raisebox{-15pt}{\begin{tikzpicture}[scale=0.4]\cell00{$c$}\cell01{$b$}\cell{-1}0{$a$}\cell{-1}1{$b$}\node at (-1.5,1.5) {\tiniest{$r$}};\node at (-2,.5) {\tiniest{$r-1$}};\node at (-.5,-.5) {\tiniest{$i$}};\node at (.5,-.5) {\tiniest{$i+1$}};\end{tikzpicture}}\quad\xrightarrow{\T_i^{(r-1)}\circ\T_i^{(r)}}\quad\raisebox{-15pt}{\begin{tikzpicture}[scale=0.4]\cell00{$b$}\cell01{$c$}\cell{-1}0{$b$}\cell{-1}1{$a$}\node at (-.5,-.5) {\tiniest{$i$}};\node at (.5,-.5) {\tiniest{$i+1$}};\end{tikzpicture}}
\]
Since all $L$-triples are preserved except for the triple $(a,c,b)$ in $\sigma$ coming from the four cells above, and since $\sigma(r+1,i)= \sigma(r+1,i)$, whereas $\sigma'(r,i)= \sigma'(r,i)$, we have 
\[\frac{\wt^{(r)}(\sigma)}{\wt^{(r)}(\widetilde{\op}_i^{(r)}(\sigma))}=\frac{1-q^{\ell}t^{A}}{1-q^{\ell}t^{A+1}}\times \begin{cases} 1,&\Qc(a,c,b)=0\\ t,&\Qc(a,c,b)=1.\end{cases}
\]
 
  \noindent\emph{Case (iv).} If $a=c$, let $\sigma_1=\T_i^{(r)}(\sigma)$ and $\sigma_2=\widetilde{\op}_i^{(r-1)}(\T_i^{(r)}(\sigma))$. Restricted to the rows $r-1,r$ and columns $i,i+1$, we have the following configurations:
\[
\sigma=\tableau{a&b\\a&d}\ ,\qquad \sigma_1=\tableau{b&a\\a&d}\ ,\qquad \sigma_2=\tableau{b&a\\d&a}\ .
\]
From our analysis of cases (ii) and (iii), we have that
\begin{align*}
\wt^{(r)}(\sigma_1)&=\frac{1-t}{1-q^{\ell}t^{A+1}}\wt^{(r)}(\sigma)\times\begin{cases}1,&\Qc(b,a,d)=0\\
q^{\ell}t^{A},&\Qc(b,a,d)=1 \end{cases}&=\wt^{(r)}(\sigma)\prob(\sigma, \sigma_1),\\
\wt^{(r)}(\sigma_2)&=\frac{1-q^{\ell}t^{A}}{1-q^{\ell}t^{A+1}}\wt^{(r)}(\sigma)\times\begin{cases}1,&\Qc(b,d,a)=0\\
t,&\Qc(b,d,a)=1 \end{cases}&=\wt^{(r)}(\sigma)\prob(\sigma, \sigma_2).
\end{align*}
Moreover, since $\Qc(b,a,d)=1-\Qc(b,d,a)$, we have that $\wt^{(r)}(\sigma_1)\prob(\sigma_1, \sigma)+\wt^{(r)}(\sigma_2)\prob(\sigma_2, \sigma)$ is equal to:
\[
\wt^{(r)}(\sigma)\times\begin{cases}
\frac{1-t}{1-q^{\ell}t^{A+1}}+\frac{t(1-q^{\ell}t^{A})}{1-q^{\ell}t^{A+1}},&\Qc(b,a,d)=0\\
\frac{q^{\ell}t^{A}(1-t)}{1-q^{\ell}t^{A+1}}+\frac{1-q^{\ell}t^{A}}{1-q^{\ell}t^{A+1}},&\Qc(b,a,d)=1
\end{cases}=\wt^{(r)}(\sigma).
\]

For all cases, if $r=L$, then $\ell_{\lambda}(\Top(\sigma'))-\ell_{\lambda}(\Top(\sigma))=\ell_{\lambda}(w')-\ell_{\lambda}(w)=1$, and otherwise there is no change to the top border of the tableau.

Suppose $\sigma'\in\widetilde{\op}_i(\sigma)$ such that $\sigma'=\T_i^{[k,L]}(\sigma)$ so that $\prob(\sigma, 
\sigma')=\prod_{r=k}^L \prob_i^{(r)}(\sigma,\sigma')$. In particular, $\sigma$ and $\sigma'$ are identical on rows $k-1$ and below. Then 
\begin{align*}
\wt(\sigma')\prob(\sigma',\sigma)&=\left(t^{\ell_{\lambda}(w')}\prod_{r=2}^L \wt^{(r)}(\sigma')\right)\prob_i(\sigma', \sigma)\\
&= t^{\ell_{\lambda}(w)+1}\left(\prod_{r=k}^L \wt^{(r)}(\widetilde{\op}_i^{(r)}(\sigma))\prob_i^{(r)}(\sigma',\sigma))\right)\left(\prod_{r=2}^{k-1} \wt^{(r)}(\sigma)\right)\\
&=t^{\ell_{\lambda}(w)+1}\left(\prod_{r=k}^L \wt^{(r)}(\sigma)\prob_i^{(r)}(\sigma,\sigma')\right)\left(\prod_{r=2}^{k-1} \wt^{(r)}(\sigma)\right)\\
&=t\wt(\sigma)\prob(\sigma,\sigma').
\end{align*}
\end{proof}

\begin{example}\label{ex:Prop 3.5} We show an example of \cref{prop:balance} for the tableau $\sigma$ from \cref{ex:prob tau}. Let $\widetilde{\op}_1(\sigma)=\{\sigma_1,\sigma_2,\sigma_3\}$. Their weights are
   \begin{align*}
   \wt(\sigma)&=\frac{q^5t^7(1-t)^5}{(1-qt)(1-q^2t^4)(1-q^3t^4)(1-qt^3)(1-qt^2)},\\
    \wt(\sigma_1)&=\frac{q^5t^6(1-t)^6}{(1-qt)(1-q^2t^4)(1-q^3t^4)(1-qt^3)(1-qt^2)^2},\\
     \wt(\sigma_2)&=\frac{q^8t^{10}(1-t)^6}{(1-q^2t^3)(1-q^3t^5)(1-q^3t^4)(1-qt^3)(1-qt^2)^2},\\
     \wt(\sigma_3)&=\frac{q^5t^{6}(1-t)^5}{(1-q^2t^3)(1-q^3t^5)(1-qt^3)(1-qt^2)^2}.  
   \end{align*}
and the probabilities $\prob(\sigma,\sigma_i)$ (computed explicitly in \cref{ex:tau}) and $\prob(\sigma_i,\sigma)$ are
\begin{align*}
 \prob(\sigma,\sigma_1)&=\frac{1-t}{1-qt^2},&\quad \prob(\sigma,\sigma_2)&=\frac{q^3t^5(1-t)(1-qt)}{(1-qt^2)(1-q^3t^5)},&\quad  \prob(\sigma,\sigma_2)&=\frac{t(1-qt)(1-q^3t^4)}{(1-qt^2)(1-q^3t^5)},\\
 \prob(\sigma_1,\sigma)&=1,&\quad  \prob(\sigma_2,\sigma)&=\frac{t(1-q^2t^3)}{1-q^2t^4}, &\quad  \prob(\sigma_3,\sigma)&=\frac{t(1-q^2t^3)}{1-q^2t^4}.
\end{align*}
In $w=\Top(\sigma)=4\,1\,|\,2\,1\,|\,7$, we have $w_1>w_2$. We check that for $i=1,2,3$, the quantities above satisfy
\[
\wt(\sigma)\prob(\sigma,\sigma_i)=t\wt(\sigma_i)\prob(\sigma_i,\sigma).
\]
\end{example}

It follows that the weight of a tableau $\sigma$ can be computed as a weighted sum over all tableaux in the set $\widetilde{\op}_i(\sigma)$.
\begin{lemma}\label{lem:sum tau}
Let $\sigma\in\Tab(\lambda)$ be a $\quinv$-non-attacking tableau with $\Top(\sigma)=w$. Let $1\leq i\leq \ell(\lambda)$ such that $\lambda_i=\lambda_{i+1}$. Then
\begin{equation}\label{eq:balance}
\wt(\sigma)=\sum_{\substack{\sigma'\in\Tab(\lambda)\\\prob_i(\sigma, \sigma')\neq 0}} \wt(\sigma')\prob_i(\sigma',\sigma)\times\begin{cases} t,& w_i>w_{i+1}\\t^{-1},& w_i<w_{i+1}.\end{cases}
\end{equation}
\end{lemma}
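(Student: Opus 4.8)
The plan is to deduce \cref{lem:sum tau} directly from \cref{prop:balance} and \cref{lem:welldefined}, treating the two cases $w_i<w_{i+1}$ and $w_i>w_{i+1}$ separately. These are exhaustive: since $\sigma$ is $\quinv$-non-attacking, columns $i$ and $i+1$ are distinct, so $w_i\neq w_{i+1}$. Throughout, note that the sum in \eqref{eq:balance} ranges over precisely the set $\widetilde{\op}_i(\sigma)$ of tableaux reachable from $\sigma$ with positive probability, and that every $\sigma'\in\widetilde{\op}_i(\sigma)$ has $\Top(\sigma')=s_i\cdot w$, since the topmost swap $\T_i^{(L)}$ is always performed.

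Suppose first that $w_i<w_{i+1}$. Then \cref{prop:balance} applies to every $\sigma'\in\widetilde{\op}_i(\sigma)$ and gives $\wt(\sigma')\prob_i(\sigma',\sigma)=t\,\wt(\sigma)\prob_i(\sigma,\sigma')$. Summing over $\sigma'\in\widetilde{\op}_i(\sigma)$, factoring out $t\,\wt(\sigma)$, and using $\sum_{\sigma'}\prob_i(\sigma,\sigma')=1$ from \cref{lem:welldefined}, we obtain $\sum_{\sigma'}\wt(\sigma')\prob_i(\sigma',\sigma)=t\,\wt(\sigma)$, which, upon dividing by $t$, is exactly \eqref{eq:balance} in this case.

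Now suppose that $w_i>w_{i+1}$. Then each $\sigma'\in\widetilde{\op}_i(\sigma)$ has $(\Top(\sigma'))_i=w_{i+1}<w_i=(\Top(\sigma'))_{i+1}$, so it is $\sigma'$, not $\sigma$, that satisfies the hypothesis of \cref{prop:balance}. Applying that proposition to the pair $(\sigma',\sigma)$ yields $\wt(\sigma)\prob_i(\sigma,\sigma')=t\,\wt(\sigma')\prob_i(\sigma',\sigma)$, i.e.\ $\wt(\sigma')\prob_i(\sigma',\sigma)=t^{-1}\wt(\sigma)\prob_i(\sigma,\sigma')$; summing and invoking \cref{lem:welldefined} once more gives $\sum_{\sigma'}\wt(\sigma')\prob_i(\sigma',\sigma)=t^{-1}\wt(\sigma)$, which is \eqref{eq:balance}. (Equivalently, one can re-run the proof of \cref{prop:balance} verbatim: every row-by-row comparison of weights and transition probabilities there is insensitive to the order of $w_i,w_{i+1}$ except at the top row, where the border contributes $t^{\ell_{\lambda}(\Top(\sigma'))-\ell_{\lambda}(\Top(\sigma))}$, equal to $t$ when $w_i<w_{i+1}$ and to $t^{-1}$ when $w_i>w_{i+1}$, since swapping removes a $\lambda$-comparable inversion instead of creating one.)

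The step requiring care — and the main obstacle — is justifying that \cref{prop:balance} may legitimately be applied to the pair $(\sigma',\sigma)$ in the second case: this needs $\sigma\in\widetilde{\op}_i(\sigma')$, i.e.\ reversibility of the support, $\sigma'\in\widetilde{\op}_i(\sigma)\iff\sigma\in\widetilde{\op}_i(\sigma')$ (and the same reversibility is implicit in the ``re-run'' route, where $\prob_i(\sigma',\sigma)$ is computed from the row configurations of $\sigma'$). I would prove this by a direct inspection of cases (i)--(iv): each set $\widetilde{\op}_i(\sigma)$ consists of the tableaux $\T_i^{[k,L]}(\sigma)$ as $k$ ranges over the admissible stopping rows, admissibility at a row being read off from its four entries $a,b,c,d$; passing to $\sigma'=\T_i^{[k,L]}(\sigma)$ replaces $(a,b,c,d)$ by $(b,a,d,c)$ at the rows above $k$ and by $(b,a,c,d)$ at row $k$, and one checks — using the identity $\Qc(x,z,y)=1-\Qc(x,y,z)$ and the non-attacking constraint that forces $a\neq d$ (which lets the conditions ``$b=c$'' and ``$a=c$'' exchange roles harmlessly) — that these transformations carry ``passable'' configurations (cases (iii), (iv), where the operator may descend with positive probability) to passable ones and ``stoppable'' configurations (cases (ii), (iv), and the degenerate (i)) to stoppable ones. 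It is this finite verification, rather than any formal symmetry of $\widetilde{\op}_i$, that makes reversibility go through; everything else reduces to the two-line summations above.
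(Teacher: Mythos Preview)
Your proof is correct and follows essentially the same approach as the paper: both deduce the lemma by combining \cref{lem:welldefined} with \cref{prop:balance}, summing $\wt(\sigma)\prob_i(\sigma,\sigma')=t^{\pm 1}\wt(\sigma')\prob_i(\sigma',\sigma)$ over $\sigma'\in\widetilde{\op}_i(\sigma)$. The paper's proof is two lines (it does only the case $w_i>w_{i+1}$ and declares the other symmetric), whereas you are more careful about the reversibility $\sigma'\in\widetilde{\op}_i(\sigma)\iff\sigma\in\widetilde{\op}_i(\sigma')$, which the paper leaves implicit in the row-by-row case analysis of \cref{prop:balance}.
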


\begin{proof} Without loss of generality, assume $w_i>w_{i+1}$.Summing over all $\sigma'$ such that $\prob_i(\sigma, \sigma')\neq 0$:
\[
\wt(\sigma)=\wt(\sigma)\sum_{\sigma'\in\Tab(\lambda)}\prob_i(\sigma,\sigma')=\sum_{\sigma'\in\Tab(\lambda)}t\wt(\sigma')\prob_i(\sigma',\sigma),
\]
where the second equality is due to \cref{prop:balance}. 
\end{proof}

\begin{defn}\label{def:prob sequence}
Let $\sigma\in\Tab(\lambda)$ be a $\quinv$-non-attacking tableau with $\Top(\sigma)=w$. Let $s_{i_{\ell}}\cdots s_{i_1}\in\PDS(\lambda)$ be the PDS such that $s_{i_{\ell}}\cdots s_{i_1} \cdot w=\inc_{\lambda}(w)$. (Note that $s_{i_j}$ is by definition $\lambda$-compatible for each $j$). Suppose $\sigma \in\widetilde{\op}_{\ell}\circ \cdots\circ\widetilde{\op}_1(\sigma')$ for some $\sigma'$ with $\Top(\sigma')=\inc_{\lambda}(w)$ (i.e $\sigma'$ is $\coquinv$-sorted). Then we write the probability of producing $\sigma$ from $\sigma'$ as the sum over the probabilities of all possible chains $(\sigma'_0,\sigma'_1,\ldots,\sigma'_{\ell})$ of tableaux that start with $\sigma'_0=\sigma'$ and end with $\sigma'_{\ell}=\sigma$, and the $k$'th element of the chain is produced by applying $\widetilde{\op}_{i_k}$ to the $k-1$'st element of the chain, for $1\leq k\leq \ell$:
\begin{equation}\label{eq:totalprob}
\prob(\sigma',\sigma)= 
\sum_{\substack{(\sigma'_{\ell-1},\ldots,\sigma'_1,\sigma'_0=\sigma')\in\Tab(\lambda)^{\ell}\\\sigma'_{k}\in\widetilde{\op}_{i_{k}}(\sigma'_{k-1}),\ 1\leq k\leq\ell-1\\
\sigma \in\widetilde{\op}_{i_{\ell}}(\sigma'_{\ell-1})}}\ 
 \prod_{k=1}^{\ell} \prob_{i_k}(\sigma'_{k-1},\sigma'_{k}).
\end{equation}
\end{defn}

In particular, by iterating \cref{lem:welldefined}, we get that for any $\coquinv$-sorted tableau $\tau$ with top border $v$ such that $\ell_{\lambda}(v)=0$ and any $\lambda$-compatible rearrangement $w\in\Sym_{\lambda}(v)$,
\begin{equation}\label{eq:sum is 1}
\sum_{\substack{\sigma'\in\Tab(\lambda)\\\Top(\sigma')=w}}\prob(\tau,\sigma')=1.
\end{equation}

Note that the definition of $\prob(\sigma,\sigma')$ can be extended to any pair of tableaux $\sigma,\sigma'\in\Tab(\lambda)$, as the sum over all admissible chains of tableaux arising from the application of a sequence of operators given by a PDS to get from $\Top(\sigma)$ to $\Top(\sigma')$. When no such PDS exists, or when there are no such admissible chains of tableaux, this probability is zero, as expected. For our purposes, it suffices to limit the definition to $\coquinv$-sorted tableaux $\sigma$.

\begin{lemma}\label{lem:perm}
Let $\sigma\in\Tab(\lambda)$ be a $\quinv$-non-attacking tableau with $\Top(\sigma)=w$. Then
\[
\wt(\sigma)=t^{\ell_{\lambda}(w)}\sum_{\substack{\sigma'\in\Tab(\lambda)\\\Top(\sigma')=\inc_{\lambda}(w)}} \wt(\sigma')\prob(\sigma',\sigma).
\]
\end{lemma}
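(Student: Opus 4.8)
The plan is to unfold $\prob(\sigma',\sigma)$ as a sum over chains of tableaux, reverse each chain one step at a time using \cref{prop:balance}, and then collapse the resulting ``backward'' probabilities to $1$ using \cref{lem:welldefined}. Write $\ell=\ell_\lambda(w)$ and let $(i_1,\dots,i_\ell)$ be the index sequence of the PDS $s_{i_\ell}\cdots s_{i_1}$ attached to $w$ in \cref{def:prob sequence}. Every chain in \eqref{eq:totalprob} is determined by its starting tableau $\sigma'_0=\sigma'$, so expanding the right-hand side of the asserted identity with \eqref{eq:totalprob} gives
\begin{multline*}
t^{\ell}\sum_{\substack{\sigma'\in\Tab(\lambda)\\\Top(\sigma')=\inc_\lambda(w)}}\wt(\sigma')\,\prob(\sigma',\sigma)\\
=\;t^{\ell}\sum_{\substack{(\sigma'_0,\dots,\sigma'_\ell)\in\Tab(\lambda)^{\ell+1}\\\Top(\sigma'_0)=\inc_\lambda(w),\ \sigma'_\ell=\sigma\\ \sigma'_k\in\widetilde{\op}_{i_k}(\sigma'_{k-1})\ (1\le k\le\ell)}}\wt(\sigma'_0)\prod_{k=1}^{\ell}\prob_{i_k}(\sigma'_{k-1},\sigma'_k).
\end{multline*}

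The PDS $s_{i_\ell}\cdots s_{i_1}$ is a reduced word for the permutation carrying $\inc_\lambda(w)$ to $w$, so in any admissible chain one has $\ell_\lambda(\Top(\sigma'_k))=k$ for all $k$ --- the length-additivity of PDS paths already exploited for the decomposition \eqref{eq:decomp} in the proof of \cref{thm:compact}. Hence each step raises $\ell_\lambda$ by exactly one, which forces $\Top(\sigma'_{k-1})_{i_k}<\Top(\sigma'_{k-1})_{i_k+1}$; the two entries are distinct because they lie in a common row of the $\quinv$-non-attacking tableau $\sigma'_{k-1}$. Thus \cref{prop:balance}, applied at index $i_k$ to the pair $(\sigma'_{k-1},\sigma'_k)$, yields $\wt(\sigma'_{k-1})\prob_{i_k}(\sigma'_{k-1},\sigma'_k)=t^{-1}\wt(\sigma'_k)\prob_{i_k}(\sigma'_k,\sigma'_{k-1})$. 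Telescoping this identity over $k=1,\dots,\ell$ rewrites each summand above as $t^{-\ell}\wt(\sigma)\prod_{k=1}^{\ell}\prob_{i_k}(\sigma'_k,\sigma'_{k-1})$, so the whole expression equals $\wt(\sigma)\cdot\mathcal{S}$, where $\mathcal{S}$ is the sum of $\prod_{k=1}^{\ell}\prob_{i_k}(\sigma'_k,\sigma'_{k-1})$ over the same set of chains, now with the endpoint $\sigma'_\ell=\sigma$ held fixed.

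It remains to prove $\mathcal{S}=1$. I would do this by establishing, by induction on $m$, the more flexible statement that $\sum\prod_{k=1}^{m}\prob_{j_k}(\sigma_k,\sigma_{k-1})=1$, where the sum ranges over all chains $\sigma_0,\dots,\sigma_m$ with $\sigma_0$ $\coquinv$-sorted, $\sigma_m\in\Tab(\lambda)$ a fixed tableau with $\ell_\lambda(\Top(\sigma_m))=m$, $\sigma_k\in\widetilde{\op}_{j_k}(\sigma_{k-1})$, and $(j_1,\dots,j_m)$ any reduced word effecting the passage from $\inc_\lambda(\Top(\sigma_m))$ up to $\Top(\sigma_m)$. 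The case $m=0$ is an empty product. For $m\ge1$, sum first over $\sigma_{m-1}$: since $\Top(\sigma_{m-1})$ has an ascent at $j_m$, the balance relation \eqref{eq:prob balance} at index $j_m$ shows that $\sigma_m\in\widetilde{\op}_{j_m}(\sigma_{m-1})$ if and only if $\sigma_{m-1}\in\widetilde{\op}_{j_m}(\sigma_m)$ (each side is nonzero exactly when the other is, as $t$ and all weights are nonzero), so $\sigma_{m-1}$ ranges over $\widetilde{\op}_{j_m}(\sigma_m)$ and contributes the factor $\prob_{j_m}(\sigma_m,\sigma_{m-1})$; the remaining chains, built from the shorter word $(j_1,\dots,j_{m-1})$ (which is again reduced) and ending at $\sigma_{m-1}$, contribute $1$ by the inductive hypothesis. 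Therefore the sum is $\sum_{\sigma_{m-1}\in\widetilde{\op}_{j_m}(\sigma_m)}\prob_{j_m}(\sigma_m,\sigma_{m-1})=1$ by \cref{lem:welldefined}. Specializing to $m=\ell$, $(j_1,\dots,j_\ell)=(i_1,\dots,i_\ell)$ and $\sigma_m=\sigma$ gives $\mathcal{S}=1$, whence $t^\ell\sum_{\sigma'}\wt(\sigma')\prob(\sigma',\sigma)=\wt(\sigma)$.

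The step I expect to demand the most care is the PDS bookkeeping that justifies invoking \cref{prop:balance} with the correct orientation: one must check that the index sequence produced by \cref{def:PDS}, and every prefix of it, realizes a length-monotone path in $S_{\ell(\lambda)}$ from the sorted border up to $\Top(\sigma)$, so that each chain step really is ascent-creating (this is also what lets one recognize the ``shorter'' word $(j_1,\dots,j_{m-1})$ above as reduced, closing the induction). This is precisely the positive-distinguished-subexpression machinery that makes \eqref{eq:decomp} work in the warm-up proof of \cref{thm:compact}, so it should carry over verbatim; once it is in hand, the weight telescoping and the collapse $\mathcal{S}=1$ are formal consequences of \cref{prop:balance} and \cref{lem:welldefined}.
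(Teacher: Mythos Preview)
Your argument is correct and uses the same ingredients as the paper --- \cref{prop:balance} for the weight transfer and \cref{lem:welldefined} for the collapse to $1$ --- but you unpack them by hand, first telescoping all $\ell$ steps and then separately proving $\mathcal{S}=1$ by induction. The paper instead packages both ingredients into the single-step identity of \cref{lem:sum tau} (which is exactly ``$\wt(\sigma)=t\sum_{\sigma'}\wt(\sigma')\prob_i(\sigma',\sigma)$ when $w_i>w_{i+1}$'') and then simply iterates it $\ell$ times along the PDS, arriving directly at \eqref{eq:totalwt}; this avoids your detour through the reversed chains and the biconditional $\sigma_m\in\widetilde{\op}_{j_m}(\sigma_{m-1})\Leftrightarrow\sigma_{m-1}\in\widetilde{\op}_{j_m}(\sigma_m)$, which is correct but unnecessary once \cref{lem:sum tau} is in hand.
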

\begin{proof} 
Let $s_{i_{\ell}}\cdots s_{i_1}$ be the PDS such that $s_{i_{\ell}}\cdots s_{i_1}\cdot w=\inc_{\lambda}(w)$. Note that $s_{i_j}$ is by definition $\lambda$-compatible for each $j$, and that $\ell_{\lambda}(w)=\ell$. Repeated application of \cref{lem:sum tau} yields 
\begin{equation}\label{eq:totalwt}
\wt(\sigma)=t^{\ell}\sum_{\substack{\sigma'\in\Tab(\lambda)\\\Top(\sigma')=\inc_{\lambda}(w)}} \wt(\sigma')
\sum_{\substack{(\sigma'_{\ell-1},\ldots,\sigma'_1,\sigma'_0=\sigma')\in\Tab(\lambda)^{\ell}\\\sigma'_{j}\in\widetilde{\op}_{i_{j}}(\sigma'_{j-1}),\ 1\leq j\leq\ell-1\\
\sigma \in\widetilde{\op}_{i_{\ell}}(\sigma'_{\ell-1})}}\ 
 \prod_{j=1}^{\ell} \prob_{i_j}(\sigma'_{j-1},\sigma'_{j}).
\end{equation}
The claim follows from \cref{def:prob sequence}.
\end{proof}

We now have all the ingredients to prove our main result, \cref{thm:main}.

\begin{proof}[Proof of \cref{thm:main}] Let $\lambda$ be a partition with $k:=\ell(\lambda)$. The right hand side of \eqref{eq:P quinv} can be written as a sum over all possible borders $w\in\Z^{k}_{>0}$ of $\quinv$-non-attacking fillings in $\Tab(\lambda)$: 
\begin{align}
\perm_{\lambda}(t)P_{\lambda}(X;q,t)&=\sum_{\substack{\sigma\in\Tab(\lambda)}}\wt(\sigma)=\sum_{w\in\Z_{>0}^{k}}\ \sum_{\substack{\sigma\in\Tab(\lambda)\\\Top(\sigma)=w}}\wt(\sigma)\nonumber\\
&=\sum_{w\in\Z_{>0}^{k}}\ \sum_{\substack{\sigma\in\Tab(\lambda)\\\Top(\sigma)=w}}\ \sum_{\substack{\sigma'\in\Tab(\lambda)\\\Top(\sigma')=\inc_{\lambda}(w)}} t^{\ell_{\lambda}(w)}\wt(\sigma')\prob(\sigma',\sigma)\label{eq:f1}\\
&=\sum_{w\in\Z_{>0}^{k}}t^{\ell_{\lambda}(w)} \sum_{\substack{\sigma'\in\Tab(\lambda)\\\Top(\sigma')=\inc_{\lambda}(w)}} \wt(\sigma')=\sum_{\substack{v\in\Z_{>0}^{k}\\\ell_{\lambda}(v)=0}}\ \sum_{\substack{\sigma'\in\Tab(\lambda)\\\Top(\sigma')=v}}\wt(\sigma')\sum_{w\in\Sym_{\lambda}(v)}t^{\ell_{\lambda}(w)}\label{eq:f2}\\
&=\sum_{\substack{v\in\Z_{>0}^{k}\\\ell_{\lambda}(v)=0}}\ \sum_{\substack{\sigma'\in\Tab(\lambda)\\\Top(\sigma')=v}}\wt(\sigma')\perm_{\lambda}(t)=\perm_{\lambda}(t) \sum_{\substack{\sigma\in\Tab(\lambda)\\\sigma\,\cqsort}} \wt(\sigma). \label{eq:f3}
\end{align}
where \eqref{eq:f1} is by \cref{lem:perm}, \eqref{eq:f2} is by \eqref{eq:sum is 1}, \eqref{eq:f3} is by \eqref{eq:perm}, and the final equality is the definition of a $\quinv$-sorted filling. The result is obtained by cancelling out the $\perm_{\lambda}(t)$ factor.
\end{proof}

\section{A compact \texttt{inv} formula for $P_{\lambda}(X;q,t)$}\label{sec:inv operators}

A similar strategy can be used to compress \eqref{eq:P inv} using probabilistic operators that extend the $\tau_i$'s to the non-attacking setting. Denote the set of  $\inv$-non-attacking fillings $\sigma:\dg(\lambda)\rightarrow\Z^+$ by $\Tab'(\lambda)$. For $\sigma\in\Tab'(\lambda)$, define
\[
\wt_{\HHL}(\sigma)=t^{\coinv(\sigma)}q^{\maj(\sigma)}\prod_{\substack{u,~\South(u)\in\dg(\lambda)\\\sigma(u)\neq \sigma(\South(u))}}\frac{1-t}{1-q^{\leg(u)}t^{\arm(u)}},
\]
so that \eqref{eq:P inv} becomes $P_{\lambda}(X;q,t)=\sum_{\substack{\sigma\in\Tab'(\lambda)}}\wt_{\HHL}(\sigma)$. Then we obtain a new compact formula for $P_{\lambda}$ in terms of the  $\inv$ statistic on $\coinv$-sorted  $\inv$-non-attacking tableaux (see \cref{def:sorted}).
\begin{theorem}\label{thm:compact inv}
The Macdonald polynomial $P_{\lambda}(X;q,t)$ is given by
\begin{equation}\label{eq:p1}
P_{\lambda}(X;q,t)=\Pi_{\lambda}(q,t)\hspace{-0.1in}\sum_{\substack{\sigma\in\Tab'(\lambda)\\\sigma\,\cisort}}\hspace{-0.1in}\wt_{\HHL}(\sigma),
\end{equation}
where 
\[
\Pi_{\lambda}(q,t)=\prod_{u\in\overline{\dg}(\lambda)}\frac{1-q^{\leg(u)+1}t^{\arm(u)+1}}{1-q^{\leg(u)+1}t^{\rarm(u)+1}}.
\]
\end{theorem}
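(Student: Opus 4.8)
The proof will parallel \cref{sec:operators} and the proof of \cref{thm:main} line for line, with the $\quinv$/$L$-triple/$\rarm$/top-border data replaced throughout by the $\inv$/$\Gamma$-triple/$\arm$/bottom-border data. First I would introduce, for each $\lambda$-compatible $i$, a probabilistic operator $\widetilde{\tau}_i$ on $\Tab'(\lambda)$ extending the inversion-flip operator $\tau_i$ of \cref{def:tau} in the same way that $\widetilde{\op}_i$ extends $\op_i$. Since $\tau_i$ is anchored at the \emph{lowest} differing row of columns $i,i+1$ and its degenerate triples live in the bottom row, $\widetilde{\tau}_i$ should scan columns $i,i+1$ from the bottom row upward: it performs the swap at the first differing pair (forced, in the bottom-row/degenerate case, which is the analog of case~(i) of $\widetilde{\op}_i$), then propagates upward with transition probabilities governed by the four entries of two consecutive rows. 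The case~(iv) analog --- where columns $i,i+1$ agree in the current row but differ above --- is where the probabilities branch, and there they should be built from $q^{\leg(u)}$ and $t^{\arm(u)}$ (in place of $\rarm(u)$) so as to track the $\arm$-based factors of $\wt_{\HHL}$. The relevant border is now $\Bot(\sigma)=\sigma(1,1)\sigma(1,2)\cdots\sigma(1,\ell(\lambda))$, and by \cref{def:sorted} a $\sigma\in\Tab'(\lambda)$ is $\coinv$-sorted exactly when this border is $\lambda$-comparably increasing, i.e. equals $\inc_\lambda(\Bot(\sigma))$.

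Next I would establish the two structural lemmas. The analog of \cref{lem:welldefined} --- that the transition probabilities at a fixed row sum to $1$ --- reduces, as in that proof, to a one-line identity in the case~(iv) analog. The heart of the argument is the analog of \cref{prop:balance}: for $\sigma\in\Tab'(\lambda)$ with bottom border $w$, a $\lambda$-comparable pair $w_i<w_{i+1}$, and $\sigma'\in\widetilde{\tau}_i(\sigma)$ (so $\Bot(\sigma')=s_i\cdot w$), one has a detailed-balance identity $\wt_{\HHL}(\sigma')\,\prob_i(\sigma',\sigma)= \gamma\cdot\wt_{\HHL}(\sigma)\,\prob_i(\sigma,\sigma')$ for an explicit $q,t$-factor $\gamma$ recording the change in the degenerate-$\Gamma$-triple contribution to $\coinv$ and in the $\arm$-factors. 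This is proved by decomposing $\wt_{\HHL}$ as a product of row contributions $\wt_{\HHL}^{(r)}$ together with the degenerate contribution, and then comparing $\wt_{\HHL}^{(r)}(\sigma)$ with $\wt_{\HHL}^{(r)}(\sigma')$ according to the relative cyclic order of the four entries in rows $r-1,r$ of columns $i,i+1$ (augmented by an auxiliary entry of $\Arm$), exactly as in the tables in the proof of \cref{prop:balance} but now with $\Gamma$-triples, $\coinv$, and $\arm$.

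Iterating this detailed balance along the positive distinguished subexpression (\cref{def:PDS}) carrying $\Bot(\sigma)$ to $\inc_\lambda(\Bot(\sigma))$ then yields the analog of \cref{lem:perm}, expressing $\wt_{\HHL}(\sigma)$ as a suitable power of $t$ times a $\prob(\sigma',\cdot)$-weighted sum of $\wt_{\HHL}(\sigma')$ over $\coinv$-sorted $\sigma'$. Summing over all $\inv$-non-attacking tableaux grouped by bottom border, using the fact that the transition probabilities from a fixed $\coinv$-sorted tableau to tableaux with a fixed border sum to $1$, and evaluating the resulting sum over each $\Sym_\lambda$-orbit (the analog of \eqref{eq:perm}), one obtains $P_\lambda$ --- via $P_\lambda=\sum_{\sigma\in\Tab'(\lambda)}\wt_{\HHL}(\sigma)$, the rewriting of \eqref{eq:P inv} --- as an explicit $q,t$-prefactor times $\sum_{\sigma\,\cisort}\wt_{\HHL}(\sigma)$. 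The final task is to identify that prefactor with $\Pi_\lambda(q,t)$; this uses the factorization \eqref{eq:PR perm}, namely $\PR_\lambda(q,t)=\perm_\lambda(t)\prod_{u\in\overline{\dg}(\lambda)}(1-q^{\leg(u)+1}t^{\rarm(u)+1})$, to rewrite $\Pi_\lambda=\widetilde{\PR}_\lambda\big/\prod_{u\in\overline{\dg}(\lambda)}(1-q^{\leg(u)+1}t^{\rarm(u)+1})$ and cancel against the pre-factor $\widetilde{\PR}_\lambda/\PR_\lambda$ of \eqref{eq:P inv}.

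The main obstacle is the construction and verification in the previous two paragraphs: pinning down the correct $\arm$-based transition probabilities for $\widetilde{\tau}_i$ and pushing the detailed-balance identity through its full case analysis, which (as in the proof of \cref{prop:balance}) is by far the longest and most delicate part. A secondary subtlety, absent in the $\quinv$ setting, is that the per-tableau weight of \eqref{eq:P inv} is \emph{not} literally preserved by the telescoping --- in \cref{thm:main} only the combinatorial factor $\perm_\lambda(t)$ is produced, whereas here the $\arm$-based weight $\wt_{\HHL}$ is genuinely deformed by each operator, so the $q$-dependent factor $\gamma$ must be carried faithfully through every step; this is precisely why the compact $\inv$ formula \eqref{eq:p1} retains the genuinely $q$-dependent pre-factor $\Pi_\lambda(q,t)$, in contrast to the pre-factor-free $\quinv$ formula \eqref{eq:main}.
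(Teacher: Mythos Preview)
Your overall strategy matches the paper's: define probabilistic operators $\widetilde{\tau}_i$ on $\Tab'(\lambda)$ that scan columns $i,i+1$ from the bottom upward, establish a detailed-balance identity and iterate it along a PDS to the $\coinv$-sorted border, then sum. The case split (i)--(iv) you describe is exactly the one the paper writes down.

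However, your last paragraph contains a genuine misconception that would derail the bookkeeping. The detailed-balance factor $\gamma$ is \emph{not} $q$-dependent: the paper's \cref{lem:inv balance} reads
\[
\wt_{\HHL}(\sigma')\,\prob_i(\sigma',\sigma)=t\,\wt_{\HHL}(\sigma)\,\prob_i(\sigma,\sigma'),
\]
identical in form to \cref{prop:balance}. The telescoping therefore produces exactly $\perm_\lambda(t)$, just as in the $\quinv$ case; there is no accumulated $q$-dependent factor. The pre-factor $\Pi_\lambda(q,t)$ has a completely different origin: the starting formula \eqref{eq:P inv} already carries $\widetilde{\PR}_\lambda/\PR_\lambda$ in front of the sum, and the very first step of the paper's proof is to rewrite this via \eqref{eq:PR perm} as $\perm_\lambda(t)^{-1}\,\Pi_\lambda(q,t)$. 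The $\perm_\lambda(t)^{-1}$ then cancels the $\perm_\lambda(t)$ produced by the telescoping, leaving $\Pi_\lambda$. In particular, your assertion that ``$P_\lambda=\sum_{\sigma\in\Tab'(\lambda)}\wt_{\HHL}(\sigma)$'' drops this pre-factor and is not a correct rewriting of \eqref{eq:P inv}.

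Two smaller corrections. First, the $\coinv$-sorted condition on $\Tab'(\lambda)$ requires the bottom border to be $\lambda$-comparably \emph{decreasing}: the paper introduces $\dec_\lambda$ (\cref{def:dec lambda}) and observes that $\sigma$ is $\coinv$-sorted iff $\Bot(\sigma)=\dec_\lambda(\Bot(\sigma))$; the degenerate $\Gamma$-triple $(a,\infty,c)$ is a coinversion when $a<c$, so minimizing $\coinv$ forces $a>c$. Second, in case~(iv) the relevant exponents are $\ell'=\leg(r+1,i)+1=L-r$ and $A'=\arm(r+1,i+1)+1$, attached to the cell in row $r+1$, not row $r$.
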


The proof of \cref{thm:compact inv} is identical to the proof of its $\quinv$ analog \cref{thm:main} modulo the definition of the probabilistic operators. Define the probabilistic  $\inv$ operators, denoted $\widetilde{\tau}_i$, as follows.
\begin{defn}
Let $\sigma\in\Tab'(\lambda,n)$ be an  $\inv$-non-attacking tableau, and let $1\leq i\leq \ell(\lambda)$ be $\lambda$-compatible with $L=\lambda_i=\lambda_{i+1}$. Define the operator $\widetilde{\tau}_i(\sigma):=\widetilde{\tau}_i^{(1)}(\sigma)$, where for $1\leq r\leq \lambda_i$,  $\widetilde{\tau}_i^{(r)}$ acts on $\sigma$ by probabilistically producing a set of tableaux, denoted by $\widetilde{\tau}_i^{(r)}(\sigma)$, based on the following cases for the entries $a=\sigma(r,i)$, $b=\sigma(r,i+1)$, $c=\sigma(r+1,i)$, and $d=\sigma(r+1,i+1)$ (assume all configurations are $\inv$-non-attacking):

\begin{center}
(i) \raisebox{-20pt}{\begin{tikzpicture}[scale=0.4]\cell{-1}0{}\cell{-1}1{}
\node at (-.5,-1.6) {$a$};\node at (.5,-1.5) {$b$};
\node at (-.5,-.5) {$\emptyset$};\node at (.5,-.5) {$\emptyset$};
\node at (-1.5,-1.5) {\tiniest{$L$}};\node at (-1.5,-.5) {};
\node at (-.6,-2.5) {\tiniest{$i$}};\node at (.6,-2.5) {\tiniest{$i+1$}};
\draw[->] (0,.1)--(0,.9);
\cell{2}0{}\cell{2}1{}
\node at (-.5,1.5) {$b$};\node at (.5,1.4) {$a$};
\node at (-.5,2.5) {$\emptyset$};\node at (.5,2.5) {$\emptyset$};
\node at (-1.5,1.5) {\tiniest{$L$}};\node at (-1.5,2.5) {};
\draw[blue] (-.9,1.9) rectangle (.9,1.1);
\end{tikzpicture}}
\quad(ii) \raisebox{-20pt}{\begin{tikzpicture}[scale=0.4]\cell{-1}0{}\cell{-1}1{}\cell{-2}0{}\cell{-2}1{}
\node at (-.5,-1.6) {$a$};\node at (.5,-1.5) {$b$};
\node at (-.5,-.5) {$b$};\node at (.5,-.5) {$d$};
\node at (-1.5,-1.5) {\tiniest{$r$}};\node at (-2,-.5) {\tiniest{$r+1$}};
\node at (-.6,-2.5) {\tiniest{$i$}};\node at (.6,-2.5) {\tiniest{$i+1$}};
\draw[->] (0,.1)--(0,.9);
\cell{2}0{}\cell{2}1{}\cell{1}0{}\cell{1}1{}
\node at (-.5,1.5) {$b$};\node at (.5,1.4) {$a$};
\node at (-.5,2.5) {$b$};\node at (.5,2.5) {$d$};
\node at (-1.5,1.5) {\tiniest{$r$}};\node at (-2,2.5) {\tiniest{$r+1$}};
\draw[blue] (-.9,1.9) rectangle (.9,1.1);
\end{tikzpicture}} or\ \ 
\raisebox{-20pt}{\begin{tikzpicture}[scale=0.4]\cell{-1}0{}\cell{-1}1{}\cell{-2}0{}\cell{-2}1{}
\node at (-.5,-1.6) {$a$};\node at (.5,-1.5) {$b$};
\node at (-.5,-.6) {$c$};\node at (.5,-.5) {$d$};
\node at (-.6,-2.5) {\tiniest{$i$}};\node at (.6,-2.5) {\tiniest{$i+1$}};
\draw[->] (0,.1)--(0,.9);
\cell{2}0{}\cell{2}1{}\cell{1}0{}\cell{1}1{}
\node at (-.5,1.5) {$b$};\node at (.5,1.4) {$a$};
\node at (-.5,2.4) {$c$};\node at (.5,2.5) {$d$};
\draw[blue] (-.9,1.9) rectangle (.9,1.1);
\end{tikzpicture}}
\quad (iii) \raisebox{-20pt}{\begin{tikzpicture}[scale=0.4]\cell{-1}0{}\cell{-1}1{}\cell{-2}0{}\cell{-2}1{}
\node at (-.5,-1.6) {$a$};\node at (.5,-1.5) {$b$};
\node at (-.5,-.5) {$c$};\node at (.5,-.5) {$b$};
\node at (-1.5,-1.5) {\tiniest{$r$}};\node at (-2,-.5) {\tiniest{$r+1$}};
\node at (-.6,-2.5) {\tiniest{$i$}};\node at (.6,-2.5) {\tiniest{$i+1$}};
\draw[->] (0,.1)--(0,.9);
\cell{2}0{}\cell{2}1{}\cell{1}0{}\cell{1}1{}
\node at (-.5,1.5) {$b$};\node at (.5,1.4) {$a$};
\node at (-.5,2.5) {$b$};\node at (.5,2.5) {$c$};
\node at (-1.5,1.5) {\tiniest{$r$}};\node at (-2,2.5) {\tiniest{$r+1$}};
\draw[blue] (-.9,1.9) rectangle (.9,1.1);
\draw[blue] (-.9,2.9) rectangle (.9,2.1);
\end{tikzpicture}} or\ \ 
\raisebox{-20pt}{\begin{tikzpicture}[scale=0.4]\cell{-1}0{}\cell{-1}1{}\cell{-2}0{}\cell{-2}1{}
\node at (-.5,-1.6) {$a$};\node at (.5,-1.5) {$b$};
\node at (-.5,-.6) {$c$};\node at (.5,-.5) {$d$};
\node at (-.6,-2.5) {\tiniest{$i$}};\node at (.6,-2.5) {\tiniest{$i+1$}};
\draw[->] (0,.1)--(0,.9);
\cell{2}0{}\cell{2}1{}\cell{1}0{}\cell{1}1{}
\node at (-.5,1.5) {$b$};\node at (.5,1.4) {$a$};
\node at (-.5,2.4) {$c$};\node at (.5,2.5) {$d$};
\draw[blue] (-.9,1.9) rectangle (.9,1.1);
\draw[blue] (-.9,2.9) rectangle (.9,2.1);
\end{tikzpicture}}
\quad (iv)\hspace{-0.3in} \raisebox{-20pt}{\begin{tikzpicture}[scale=0.4]
\cell{-1}2{}\cell{-1}3{}\cell{-2}2{}\cell{-2}3{}
\cell{-1}{-1}{}\cell{-1}{-2}{}\cell{-2}{-1}{}\cell{-2}{-2}{}
\node at (-.5,-1.6) {$a$};\node at (.5,-1.5) {$b$};
\node at (-.5,-.6) {$a$};\node at (.5,-.5) {$d$};
\node at (-1.5,-1.5) {\tiniest{$r$}};\node at (-2,-.5) {\tiniest{$r+1$}};
\node at (-.6,-2.5) {\tiniest{$i$}};\node at (.6,-2.5) {\tiniest{$i+1$}};
\draw[->] (-.5,.1)--(-1.5,.9);
\draw[->] (.5,.1)--(1.5,.9);
\cell{2}{0}{}\cell{2}{1}{}\cell{1}{0}{}\cell{1}{1}{}
\node at (-.5-2,1.5) {$b$};\node at (.5-2,1.4) {$a$};
\node at (-.5-2,2.4) {$a$};\node at (.5-2,2.5) {$d$};
\node at (-1.5-2,1.5) {\tiniest{$r$}};\node at (-2-2,2.5) {\tiniest{$r+1$}};
\draw[blue] (-.9-2,1.9) rectangle (.9-2,1.1);
\node at (-.5+2,1.5) {$b$};\node at (.5+2,1.4) {$a$};
\node at (-.5+2,2.5) {$d$};\node at (.5+2,2.4) {$a$};
\draw[blue] (-.9+2,1.9) rectangle (.9+2,1.1);
\draw[blue] (-.9+2,2.9) rectangle (.9+2,2.1);
\end{tikzpicture}}

\end{center}

\begin{itemize}
\item[(i)] If $r=L$, $\widetilde{\tau}_i^{(r)}$ produces $\T_i^{(r)}(\sigma)$ with probability 1.
\item[(ii)] If $b=c$ (left) or $\{a,b\}\cap \{c,d\}=\emptyset$ and $\Qc(c,a,d)=\Qc(c,b,d)$ (right), $\widetilde{\tau}_i^{(r)}$ produces $\T_i^{(r)}(\sigma)$ with probability $1$.
\item[(iii)] If $b=d$ (left) or $\{a,b\}\cap \{c,d\}=\emptyset$ and $\Qc(c,a,d)\neq\Qc(c,b,d)$ (right), $\widetilde{\tau}_i^{(r)}$ produces the set $\widetilde{\tau}_i^{(r+1)}(\T_i^{(r)}(\sigma))$ with probability 1.
\item[(iv)] If $a=c$, $\widetilde{\tau}_i^{(r)}$ produces $\T_i^{(r)}(\sigma)$ with probability $(q^{\ell'}t^{A'})^{\Qc(a,b,d)}(1-t)/(1-q^{\ell'}t^{A'+1})$ and it produces the set $\widetilde{\tau}_i^{(r+1)}(\T_i^{(r)}(\sigma))$ with probability $t^{1-\Qc(a,b,d)}(1-q^{\ell'}t^{A'})/(1-q^{\ell'}t^{A'+1})$, where $A':=\arm(r+1,i+1)+1$ and $\ell':=\leg(r+1,i)+1=L-r$.
\end{itemize}
Suppose $\sigma'\in\widetilde{\tau}_i(\sigma)$ such that $\sigma'=\T_i^{[1,k]}(\sigma)$. Let $\prob_i^{(r)}(\sigma,\sigma')$ denote the probability that $\widetilde{\tau}^{(r)}_i$ applied to $\T_i^{[1,r-1]}(\sigma)$ produces $\widetilde{\tau}^{(r+1)}_i\left(\T_i^{[1,r]}(\sigma)\right)$ when $r<k$ and $\T_i^{[1,r]}(\sigma)$ when $r=k$. Then the probability that $\widetilde{\tau}_i$ applied to $\sigma$ produces $\sigma'$ is
\[
\prob_i(\sigma, \sigma')=\prod_{r=1}^{k} \prob_i^{(r)}(\sigma, \sigma').
\]
\end{defn}

\begin{defn}\label{def:dec lambda}
Let $\sigma\in\Tab'(\lambda)$ and set $k=\ell(\lambda)$. The \emph{bottom border} of $\sigma$, denoted $\Bot(\sigma)$, is a word in $\Z_{>0}^k$ given by the sequence of entries in the bottom row of $\sigma$, read from left to right:
\[
\Bot(\sigma)=\sigma(1,1)\sigma(1,2)\cdots\sigma(1,k).
\]
Define $\dec_{\lambda}(w)\in\Sym_{\lambda}(w)$ to be the unique arrangement of the bottom border such that the entries are decreasing within each block of columns of the same length: $\dec_{\lambda}(w)_i>\dec_{\lambda}(w)_{i+1}$ for all $\lambda$-compatible $i$. Then we define the \emph{length} of a bottom border $w$ with respect to $\lambda$ to be the number of $\lambda$-comparable coinversions: 
\[\ell'_{\lambda}(w):=\{i<j: \lambda_i=\lambda_j, w_i<w_j\}.
\]  
Since inversions and coinversions are equidistributed, we have $\sum_{v\in\Sym_{\lambda}(w)}t^{\ell'_{\lambda}(v)}=\perm_{\lambda}(t)$. Observe that $\sigma\in\Tab'(\lambda)$ with $\Bot(\sigma)=w$ is $\coinv$-sorted if and only if $w=\dec_{\lambda}(w)$.
\end{defn}

A similar case analysis to the one done in \cref{sec:operators} produces the $\inv$ analog of \cref{prop:balance}.
\begin{lemma}\label{lem:inv balance}
Let $\lambda$ be a partition and let $1\leq i\leq \ell(\lambda)$ be $\lambda$-compatible. Let $\sigma\in\Tab'(\lambda,n)$ with $\Bot(\sigma)=w$, such that $w_i>w_{i+1}$, 
and let $\sigma \in\widetilde{\tau}_i(\sigma)$. Then $\Bot(\sigma')=s_i\cdot w$ and
\begin{equation}\label{eq:prob balance inv}
\wt_{\HHL}(\sigma')\prob_i(\sigma',\sigma)=t\wt_{\HHL}(\sigma)\prob_i(\sigma,\sigma').
\end{equation}
\end{lemma}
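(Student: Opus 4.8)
The plan is to reproduce the proof of \cref{prop:balance} almost verbatim, exchanging the roles of ``top'' and ``bottom'': the bottom border $\Bot$ replaces $\Top$, the statistics $\coinv,\maj$ replace $\quinv,\maj$, the HHL arm $\arm$ replaces the queue arm $\rarm$, the weight $\wt_{\HHL}$ replaces $\wt$, and the operators $\widetilde{\tau}_i$, which scan the rows of columns $i,i+1$ from the bottom up (starting at $\widetilde{\tau}_i^{(1)}$ and terminating at the degenerate step $r=L$ of Case (i)), replace the operators $\widetilde{\op}_i$. First I would record that $\Bot(\sigma')=s_i\cdot w$: the swap $\T_i^{(1)}$ of the bottom-row entries of columns $i,i+1$ occurs in every one of Cases (ii)--(iv) of the definition of $\widetilde{\tau}_i^{(1)}$, hence is necessarily part of the action of $\widetilde{\tau}_i$, so the entries in positions $i$ and $i+1$ of the bottom border get exchanged.

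Next I would set up the weight factorization. With $w=\Bot(\sigma)$, decompose
\[
\wt_{\HHL}(\sigma)=t^{\ell'_{\lambda}(w)}\prod_{r=1}^{\lambda_1-1}\wt_{\HHL}^{(r)}(\sigma),
\]
where $\wt_{\HHL}^{(r)}(\sigma)$ collects the contribution to $\coinv(\sigma)$, to $\maj(\sigma)$, and to the arm--leg product of $\wt_{\HHL}$ coming from the cells of rows $r,r+1$ in columns $i,i+1$ together with the arm cells $(r,j),(r+1,j)$ with $j>i+1$, but \emph{not} counting the degenerate $\Gamma$-triples. As in the $\quinv$ case, the degenerate $\Gamma$-triples that contribute to $\coinv(\sigma)$ are exactly the $\lambda$-comparable coinversions of $\Bot(\sigma)$ --- a one-line check using $\Qc(a,\infty,c)$ confirms this --- so their total contribution is $t^{\ell'_{\lambda}(w)}$ in the sense of \cref{def:dec lambda}, and Case (i) of $\widetilde{\tau}_i$ is precisely the instance $\lambda_i=\lambda_{i+1}=1$ of such a triple. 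Since a tableau $\sigma'=\T_i^{[1,k]}(\sigma)$ in $\widetilde{\tau}_i(\sigma)$ agrees with $\sigma$ on every row $\ge k+1$, only the factors $\wt_{\HHL}^{(r)}$ with $1\le r\le k$ can change, while the degenerate-triple factor changes by exactly $t$ because $w_i>w_{i+1}$ becomes a $\lambda$-comparable coinversion after the swap.

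The heart of the argument is the per-row identity $\wt_{\HHL}^{(r)}(\sigma)\,\prob_i^{(r)}(\sigma,\sigma')=\wt_{\HHL}^{(r)}(\sigma')\,\prob_i^{(r)}(\sigma',\sigma)$ for $1\le r\le k$, proved case by case following Cases (ii), (iii), (iv) of the proof of \cref{prop:balance}. In the ``generic'' subcases of (ii) and (iii), where $\{a,b\}\cap\{c,d\}=\emptyset$, the arm and leg factors and $\maj$ are untouched by $\T_i^{(r)}$ (resp.\ $\T_i^{(r)}\circ\T_i^{(r+1)}$), and a finite enumeration of the cyclic orders of $a,b,c,d$ permitted by $\Qc(c,a,d)=\Qc(c,b,d)$ (resp.\ $\neq$), together with an arm entry $f=\sigma(r,j)$, shows that every affected $\Gamma$-triple keeps its status, so $\wt_{\HHL}^{(r)}(\sigma)=\wt_{\HHL}^{(r)}(\sigma')$ and the transition has probability $1$ in each direction. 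In the subcases $b=c$, $b=d$, and $a=c$ (Case (iv)), a vertical pair among $(r,i),(r+1,i)$ and $(r,i+1),(r+1,i+1)$ switches between being equal and unequal, which is exactly what makes an arm--leg factor of $\wt_{\HHL}$ appear or disappear; a short table of cyclic orders of $\{a,b,c,d,f\}$ split according to whether $\Qc(a,b,d)$ is $0$ or $1$ then pins down the residual $q,t$ monomial, and the net change is precisely the ratio of the transition probabilities $(1-t)/(1-q^{\ell'}t^{A'+1})$ versus $(1-q^{\ell'}t^{A'})/(1-q^{\ell'}t^{A'+1})$ assigned in Case (iv) of $\widetilde{\tau}_i^{(r)}$, with $\ell'$ and $A'$ the quantities defined there. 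Multiplying the per-row identities over $1\le r\le k$ and inserting the $t$ from the degenerate-triple count yields \eqref{eq:prob balance inv}; the well-definedness analogue of \cref{lem:welldefined} (that the transition probabilities at each row sum to $1$) is the same one-line verification that $(1-t)+t(1-q^{\ell'}t^{A'})$ and $q^{\ell'}t^{A'}(1-t)+(1-q^{\ell'}t^{A'})$ both equal $1-q^{\ell'}t^{A'+1}$.

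I expect the only real obstacle to be the bookkeeping in the third step: one must correctly determine which $\Gamma$-triples are disturbed when both $\T_i^{(r)}$ and $\T_i^{(r+1)}$ act --- namely those whose top cell, or whose cell directly below the top cell, lies in row $r$ or $r+1$ within columns $i,i+1$ --- and handle the $\inv$ tie-breaking convention (top-to-bottom, left-to-right) in the event that two of $a,b,c,d,f$ coincide. Because a $\Gamma$-triple has its arm cell in the \emph{same} row as its top cell (the mirror image of the $L$-triple geometry used for $\quinv$), the sign conventions and the tables look ``flipped'' relative to those in \cref{prop:balance}, but the set of verifications is finite and mechanical and yields the identical list of ratios; there should be no conceptual difficulty beyond transcription.
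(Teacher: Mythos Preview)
Your proposal is correct and matches the paper's approach: the paper states only that ``a similar case analysis to the one done in \cref{sec:operators} produces the $\inv$ analog of \cref{prop:balance}'' and gives no further details, so your plan of mirroring the proof of \cref{prop:balance} with top/bottom, $\quinv$/$\inv$, $\rarm$/$\arm$, and $\widetilde{\op}_i$/$\widetilde{\tau}_i$ swapped is exactly what is intended. One tiny bookkeeping correction: the arm cells that enter the $\Gamma$-triple analysis at rows $r,r+1$ lie only in row $r+1$ (since a $\Gamma$-triple's arm cell sits in the same row as its top cell), not in both rows $r$ and $r+1$ as you wrote.
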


Again, the same argument used to prove \cref{lem:perm} yields the $\inv$ analog. 
\begin{lemma}\label{lem:perm inv}
Let $\sigma\in\Tab'(\lambda)$ be an $\inv$-non-attacking tableau with $\Bot(\sigma)=w$. Then
\[
\wt_{\HHL}(\sigma)=t^{\ell_{\lambda}(w)}\sum_{\substack{\sigma'\in\Tab'(\lambda)\\\Bot(\sigma')=\dec_{\lambda}(w)}} \wt_{\HHL}(\sigma')\prob(\sigma',\sigma).
\]
Moreover, for any $\coinv$-sorted $\inv$-non-attacking tableau $\tau$ with bottom row $v$ such that $v=\dec_{\lambda}(v)$ and any $\lambda$-compatible permutation $w\in\Sym_{\lambda}(v)$,
\[
\sum_{\substack{\sigma'\in\Tab'(\lambda)\\\Bot(\sigma')=w}}\prob(\tau,\sigma')=1.
\]
\end{lemma}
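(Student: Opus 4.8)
The plan is to transport the argument of \cref{sec:operators} to the $\inv$ setting essentially verbatim, with $\widetilde{\tau}_i$ in place of $\widetilde{\op}_i$, the bottom border $\Bot$ in place of $\Top$, and $\dec_\lambda$ in place of $\inc_\lambda$. Two preliminary facts are needed. The first is the $\inv$ analog of \cref{lem:welldefined}: for every $\sigma\in\Tab'(\lambda)$ and every $\lambda$-compatible $i$,
\[
\sum_{\sigma'\in\widetilde{\tau}_i(\sigma)}\prob_i(\sigma,\sigma')=1.
\]
Just as in \cref{lem:welldefined}, this reduces to checking that the transition probabilities at each row sum to $1$; cases (i)--(iii) are immediate, and in case (iv) the two prescribed probabilities add to $\tfrac{(q^{\ell'}t^{A'})^{\Qc(a,b,d)}(1-t)}{1-q^{\ell'}t^{A'+1}}+\tfrac{t^{1-\Qc(a,b,d)}(1-q^{\ell'}t^{A'})}{1-q^{\ell'}t^{A'+1}}=1$, by the same split into $\Qc(a,b,d)=0$ and $\Qc(a,b,d)=1$. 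The second fact is the detailed-balance identity \cref{lem:inv balance}, which is already stated.

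Granting these, I would first record the $\inv$ analog of \cref{lem:sum tau}: for $\sigma\in\Tab'(\lambda)$ with $\Bot(\sigma)=w$ and $\lambda$-compatible $i$, multiply $\wt_{\HHL}(\sigma)$ by $1=\sum_{\sigma'\in\widetilde{\tau}_i(\sigma)}\prob_i(\sigma,\sigma')$ and apply \cref{lem:inv balance} to each term to obtain $\wt_{\HHL}(\sigma)=\sum_{\sigma'}\wt_{\HHL}(\sigma')\prob_i(\sigma',\sigma)$, multiplied by $t$ if $w_i<w_{i+1}$ and by $t^{-1}$ if $w_i>w_{i+1}$ (the signs are the mirror image of those in \cref{lem:sum tau} because here the target order is the \emph{decreasing} one). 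Then I would fix the PDS $s_{i_\ell}\cdots s_{i_1}\in\PDS(\lambda)$ with $s_{i_\ell}\cdots s_{i_1}\cdot w=\dec_\lambda(w)$, whose length is the number $\ell'_\lambda(w)$ of $\lambda$-comparable coinversions of $w$, and iterate this identity along the chain exactly as in the proof of \cref{lem:perm} and \eqref{eq:totalwt}: each step moves $\Bot$ one transposition toward $\dec_\lambda(w)$, is therefore of the ``$w_i<w_{i+1}$'' type and contributes one factor of $t$, and the nested sums over intermediate tableaux assemble into the probability $\prob(\sigma',\sigma)$ of \cref{def:prob sequence}. This gives the first assertion.

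For the second assertion, fix a $\coinv$-sorted $\tau$ with $\Bot(\tau)=v=\dec_\lambda(v)$ and a $\lambda$-compatible rearrangement $w\in\Sym_\lambda(v)$, and let $s_{i_\ell}\cdots s_{i_1}$ be the PDS with $s_{i_\ell}\cdots s_{i_1}\cdot w=v$. By the $\inv$ analog of \cref{def:prob sequence}, for $\sigma'$ with $\Bot(\sigma')=w$ the quantity $\prob(\tau,\sigma')$ is the sum over all chains $\tau=\sigma'_0,\sigma'_1,\dots,\sigma'_\ell=\sigma'$ with $\sigma'_k\in\widetilde{\tau}_{i_k}(\sigma'_{k-1})$ of $\prod_{k=1}^{\ell}\prob_{i_k}(\sigma'_{k-1},\sigma'_k)$. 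Summing over all $\sigma'$ with $\Bot(\sigma')=w$ is the same as summing over all such chains, and evaluating this sum from the innermost operator outward, using the $\inv$ analog of \cref{lem:welldefined} at each stage, collapses it to $1$, exactly as \eqref{eq:sum is 1} is deduced from \cref{lem:welldefined} for the $\quinv$ operators.

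The genuine content is concentrated in \cref{lem:inv balance}, whose proof is the case analysis of cases (ii)--(iv) mirroring that of \cref{prop:balance} with ``the row below'' and ``the row above'' interchanged and $\Arm$ in place of $\Rarm$; within the present argument, the only thing demanding care is bookkeeping---keeping straight that $\widetilde{\tau}_i$ scans columns from the bottom up, so that the choice of $\dec_\lambda$ over $\inc_\lambda$, the orientation of the PDS, the relevance of $\ell'_\lambda$, and the sign of the $t$-exponent in the balance relation are all opposite to their counterparts in \cref{sec:operators}. With those conventions pinned down, the chain of identities above goes through with no new ideas.
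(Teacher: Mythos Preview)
Your proposal is correct and mirrors the paper's own approach: the paper simply states that ``the same argument used to prove \cref{lem:perm} yields the $\inv$ analog,'' and you have spelled out precisely that transport---the $\inv$ version of \cref{lem:welldefined}, the detailed-balance identity \cref{lem:inv balance}, the $\inv$ analog of \cref{lem:sum tau}, and the iteration along a PDS---with the correct reversal of conventions ($\Bot$ for $\Top$, $\dec_\lambda$ for $\inc_\lambda$, coinversions for inversions). Your remark that the exponent of $t$ should be $\ell'_\lambda(w)$ (the count of $\lambda$-comparable coinversions from \cref{def:dec lambda}) rather than $\ell_\lambda(w)$ is well taken; the paper's statement appears to use $\ell_\lambda$ loosely here.
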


\begin{proof}[Proof of \cref{thm:compact inv}] Let $\lambda$ be a partition and set $k:=\ell(\lambda)$. First, using \eqref{eq:PR perm}, we write
\[
\frac{\widetilde{\PR}_{\lambda}(q,t)}{\PR_{\lambda}(q,t)} = \perm_{\lambda}(q,t)^{-1}\Pi_{\lambda}(q,t).
\]
Then the right hand side of \eqref{eq:P inv} can be written as a sum over all possible bottom rows $w\in\Z^{k}_{>0}$ of $\inv$-non-attacking fillings in $\Tab'(\lambda)$: 
\begin{align}
P_{\lambda}(X;q,t)&= % \sum_{\substack{\sigma\in\Tab'(\lambda)}}\wt_{\HHL}(\sigma)=
\perm_{\lambda}(t)^{-1}\Pi_{\lambda}(q,t)\sum_{w\in\Z_{>0}^{k}}\ \sum_{\substack{\sigma\in\Tab'(\lambda)\\\Bot(\sigma)=w}}\wt_{\HHL}(\sigma)\nonumber\\
&=\perm_{\lambda}(t)^{-1}\Pi_{\lambda}(q,t)\sum_{w\in\Z_{>0}^{k}}\ \sum_{\substack{\sigma\in\Tab'(\lambda)\\\Bot(\sigma)=w}}\ \sum_{\substack{\sigma'\in\Tab'(\lambda)\\\Bot(\sigma')=\dec_{\lambda}(w)}} t^{\ell_{\lambda}(w)}\wt_{\HHL}(\sigma')\prob(\sigma',\sigma)\label{eq:g1}\\
&=\Pi_{\lambda}(q,t) \sum_{\substack{\sigma\in\Tab'(\lambda)\\\sigma\,\cisort}} \wt_{\HHL}(\sigma) \label{eq:g3}
\end{align}
where \eqref{eq:g1} is by \cref{lem:perm inv}, and the final equality follows from \eqref{eq:perm} and the definition of a $\coinv$-sorted $\inv$-non-attacking filling. 
\end{proof}

\section{Comparison to Martin's multiline queues}\label{sec:martin}

In this section, we explain the map from sorted $\quinv$-non-attacking tableaux to Martin's multiline queues in \cite{martin-2020}, and show that at $x_1=x_2=\cdots=q=1$, the weights of our tableaux correspond to the weights of the multiline queues.

\begin{defn}[{\cite[Algorithms 1 and 2]{martin-2020}}]\label{def:mlq}
 Let $\lambda$ be a partition, fix an integer $n\geq\ell(\lambda)$, and set $L:=\lambda_1$. Define $\Z_n:=\Z/n\Z$ with representatives $\Z_n=\{1,\ldots,n\}$. A $n\times L$ \emph{cylindric lattice} is a lattice with $L$ rows numbered from bottom to top, and $n$ columns which are numbered by $\Z_n$ from left to right such that $n+1\equiv 1$.  A \emph{particle system} of type $(\lambda, n)$ is a configuration of particles on a $n\times L$ cylindric lattice, where each site of the lattice is occupied by at most one particle, and each row $j$ contains $\lambda_j$ particles. See \cref{fig:MLQ tab} for an example of a particle system (ignoring the lines linking the particles.) 
    
Fix a sequence of permutations $(\pi_{r,\ell})_{2\leq r\leq \ell \leq L}$ where $\pi_{r,\ell}\in S_{\lambda'_{\ell}-\lambda'_{\ell+1}}$ for $2\leq r\leq \ell \leq L$. A \emph{multiline queue} of type $(\lambda,n)$ is a particle system combined with a pairing of the particles between adjacent rows in an order determined by the sequence of permutations $(\pi_{r,\ell})_{r,\ell}$. Each pairing is assigned a weight in $\mathbb{Q}(t)$. The pairings and weights are obtained according to the following procedure. 
    \begin{itemize}
        \item For each row $r=L,L-1,\ldots,2$ in that order, begin by labeling all unlabeled particles with ``$r$''. 
        \item Within row $r$, let $\pi_{r,\ell}$ determine the pairing order of particles labeled $\ell$. For $\ell=L,\ldots,r$ in that order, consider each particle labeled $\ell$ in turn according to the order $\pi_{r,\ell}$. 
        \begin{itemize}
            \item[(i)] If there is an unlabeled particle in the same column in row $r-1$, pair to that particle. The weight of such a pairing is 1, and we call this a \emph{trivial pairing}.
            \item[(ii)] Otherwise, the pairing is \emph{non-trivial}. Choose an unlabeled particle to pair with in row $r-1$, and call this pairing $p$. The weight of the pairing $p$ is $t^{s(p)}(1-t)/(1-t^{f(p)+1})$ where $f(p)$ is the total number of unlabeled particles in row $r-1$ remaining after $p$ is made, and $s(p)$ is the number of unlabeled particles (cyclically) between the pair when looking to the right from the particle in row $r$. For such a pairing, denote $r(p):=r$ and $\ell(p):=\ell$.
            \item[(iii)] Label the particle in row $r-1$ that is paired to with the label ``$\ell$''.
        \end{itemize}
    \end{itemize}
    The weight $\wt(M)$ of a multiline queue $M$ is the product of the weights over all of its pairings:
    \[
    \wt(M)=\prod_{p} \frac{t^{s(p)}(1-t)}{1-t^{f(p)+1}}.
    \] 
    Denote the set of multiline queues of type $(\lambda,n)$ by $\MLQ_M(\lambda,n)$ (we use the notation $\MLQ_M$ to distinguish from the multiline queues defined in \cite{CMW18}).
\end{defn}

Let $\sigma\in\Tab(\lambda,n)$ be a $\coquinv$-sorted $\quinv$-non-attacking filling. We will map $\sigma$ to a multiline queue $M$ so that $\wt(\sigma)=\wt(M)$. First, for $1\leq r\leq \lambda_1$, the positions of the particles in row $r$ of the particle system of $M$ are given by the contents of row $r$ in $\sigma$: the columns in $M$ that contain particles are $\{\sigma(r,1),\ldots,\sigma(r,\lambda'_r)\}$. Next, the permutations $\pi_{r,\ell}$ dictating the pairing orders are obtained by standardizing the word obtained by scanning the entries of $\sigma$ in row $r$ in the columns of height $\ell$. Finally, the pairings are given as follows: a particle at site $(r,\sigma(r,k))$ in the multiline queue is paired to the particle at site $(r-1,\sigma(r-1,k))$. The $\quinv$-non-attacking condition on $\sigma$ implies condition (i) is satisfied, and the fact that $\sigma$ is sorted means each pairing of a given configuration is counted exactly once. See \cref{fig:MLQ tab}.

\begin{example}\label{fig:MLQ tab}
For $\sigma\in\Tab((4,4,3,3,1),9)$ below, with weight
\[
\wt(\sigma)=\frac{t^7q^6(1-t)^7}{(1-qt^4)(1-qt^3)(1-q^2t^4)(1-qt)(1-q^3t^4)(1-q^2t^3)(1-q^2t^2)},
\] 
we have the corresponding multiline queue $M$. 
\begin{center}

\begin{tikzpicture}[scale=0.7]
\node at (-2.5,0) {$\sigma=$};
\node at (0,0) {$\tableau{1&6\\6&8&2&5\\4&8&2&1\\4&2&7&4&9}$};

\def \w{.8};
\def \h{1};
\def \r{0.25};
    
\begin{scope}[xshift=6cm,yshift=-1.7cm]
\node at (-1,2) {$M=$};
\draw[gray!50,thin,xstep=\w,ystep=\h] (0,0) grid (9*\w,4*\h);

\foreach \xx in {1,...,9}
{
\node at (\w*\xx-0.5,-.2) {\tiny \xx};
}
\foreach \xx\yy\i in {0/3/4,5/3/4,5/2/4,7/2/4,4/2/3,1/2/3,3/1/4,7/1/4,1/1/3,0/1/3,3/0/4,1/0/4,6/0/3,4/0/3,8/0/1}
    {
    \draw (\w*.5+\w*\xx,\h*.5+\h*\yy) circle (\r cm);
    \node at (\w*.5+\w*\xx,\h*.5+\h*\yy) {\i};
    }

\draw[blue,thick] (\w*.5,\h*3.5-\r)--(\w*.5,\h*2.9)--(\w*5.5,\h*2.9)--(\w*5.5,\h*2.5+\r);
\draw[red,thick] (\w*5.5,\h*3.5-\r)--(\w*5.5,\h*3.1)--(\w*7.5,\h*3.1)--(\w*7.5,\h*2.5+\r);
    
\draw[blue,thick,-stealth] (\w*5.5,\h*2.5-\r)--(\w*5.5,\h*1.9)--(\w*9.3,\h*1.9);
\draw[blue,thick] (-.2,\h*1.9)--(\w*3.5,\h*1.9)--(\w*3.5,\h*1.5+\r);
\draw[red,thick] (\w*7.5,\h*2.5-\r)--(\w*7.5,\h*1.5+\r);
\draw[black!50!green,thick] (\w*1.5,\h*2.5-\r)--(\w*1.5,\h*1.5+\r);
\draw[orange,thick,-stealth] (\w*4.5,\h*2.5-\r)--(\w*4.5,\h*2.1)--(\w*9.3,\h*2.1);
\draw[orange,thick] (-.2,\h*2.1)--(\w*.5,\h*2.1)--(\w*.5,\h*1.5+\r);

\draw[blue,thick] (\w*3.5,\h*1.5-\r)--(\w*3.5,\h*.5+\r);
\draw[red,thick,-stealth] (\w*7.5,\h*1.5-\r)--(\w*7.5,\h*.9)--(\w*9.3,\h*.9);
\draw[red,thick] (-.2,\h*.9)--(\w*1.5,\h*.9)--(\w*1.5,\h*.5+\r);
\draw[black!50!green,thick] (\w*1.5,\h*1.5-\r)--(\w*1.5,\h*1.1)--(\w*6.5,\h*1.1)--(\w*6.5,\h*.5+\r);
\draw[orange,thick] (\w*.5,\h*1.5-\r)--(\w*.5,\h*1)--(\w*4.5,\h*1)--(\w*4.5,\h*.5+\r);

\end{scope}
\end{tikzpicture}
\end{center}
Matching the order of particles to their order in the tableau when read left to right, we have $\pi_{4,4}=\pi_{3,4}=\pi_{3,3}=\pi_{2,4}=12\in S_2$, $\pi_{2,3}=21\in S_2$. Thus the weights on the pairings for $(r,\ell)=(4,4)$ are $\frac{t^2(1-t)}{1-t^4}\cdot \frac{1-t}{1-t^3}$, for $(r,\ell)=(3,4)$ they are $\frac{t^3(1-t)}{1-t^4}\cdot 1$, for $(r,\ell)=(3,3)$ they are $1\cdot\frac{1-t}{1-t}$, for $(r,\ell)=(2,4)$ they are $1\cdot\frac{t(1-t)}{1-t^4}$, and for $(r,\ell)=(2,3)$ they are $\frac{t(1-t)}{1-t^3}\cdot \frac{1-t}{1-t^2}$. Thus 
\[\wt(M)=\frac{t^7(1-t)^7}{(1-t^4)^3(1-t^3)^2(1-t^2)(1-t)}=\wt(\sigma)\big\vert_{q=1}.\]

\end{example}

For each $r>1$ and $1\leq k\leq \lambda'_r$, the pairing $p_{r,j}$ in $M$ between the particles $(r,\sigma(r,k))$ and $(r-1,\sigma(r-1,k))$ is trivial if and only if $\sigma(r,k)=\sigma(r-1,k)$, and contributes 1 both to $\wt(M)$ and $\wt(\sigma)$.  Otherwise if the pairing $p$ is non-trivial, we observe that $f(p)$ (the number of remaining unlabeled particles in row $r-1$ after the pairing $p$ has been made) is equal to $\rarm(r,k)$. Similarly, $s(p)$ is the number of indices $k<j$ such that cyclically, $\sigma(r,k)<\sigma(r-1,j)<\sigma(r-1,k)$, which means $\Qc(\sigma(r,k),\sigma(r-1,k),\sigma(r-1,j))=1$ making that corresponding $L$-triple a coquinv triple. Thus $t^{s(p)}(1-t)/(1-t^{f(p)+1})$ is equal to the contribution to $\wt(\sigma)$ from this pair of cells. Taking the product over all pairings, we get the following lemma.

\begin{lemma}\label{cor:martin bijection}
Fix a partition $\lambda$ and an integer $n\geq \ell(\lambda)$. The multiline queues of type $(\lambda,n)$ defined in \cref{def:mlq} are in bijection with the set of $\coquinv$-sorted $\quinv$-non-attacking fillings in $\Tab(\lambda,n)$ with
\[
\sum_{M\in \MLQ_M(\lambda,n)}\wt(M)(t)=\sum_{\substack{\sigma\in\Tab(\lambda,n)\\\sigma\,\cqsort}}\wt(\sigma)(1,\ldots,1;1,t).
\]
\end{lemma}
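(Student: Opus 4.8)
The plan is to turn the map $\sigma\mapsto M$ described just above the statement into a precise, weight-preserving bijection. First I would make the forward map explicit: given a $\coquinv$-sorted $\quinv$-non-attacking $\sigma\in\Tab(\lambda,n)$, put particles in row $r$ of an $n\times\lambda_1$ cylindric lattice at the columns $\{\sigma(r,1),\dots,\sigma(r,\lambda'_r)\}$ (these are distinct lattice sites because two cells of $\dg(\lambda)$ in one row are always $\quinv$-attacking, so $\sigma$ restricted to a row is injective), take $\pi_{r,\ell}$ to be the standardization of the word $\sigma(r,1)\sigma(r,2)\cdots$ read over the columns $k$ of height $\ell$, and pair the particle at $(r,\sigma(r,k))$ to the one at $(r-1,\sigma(r-1,k))$ for $2\le r\le\lambda_1$ and $1\le k\le\lambda'_r$. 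The first thing to verify is that this data is a legal multiline queue in the sense of \cref{def:mlq}: running Martin's labeling procedure with the permutations $\pi_{r,\ell}$ must reproduce exactly these pairings. This comes down to checking that rule (i) (the trivial pairing) is invoked at step $(r,k)$ precisely when $\sigma(r,k)=\sigma(r-1,k)$ — this is where the $\quinv$-non-attacking hypothesis enters — and that in the remaining cases rule (ii) pairs $(r,k)$ with the intended partner in the order dictated by $\pi_{r,\ell}$.

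Next I would establish bijectivity. For the inverse, read a filling off of $M\in\MLQ_M(\lambda,n)$ row by row: the multiset of row-$r$ entries is the set of occupied columns of row $r$ of $M$, and the pairings of $M$, together with the requirement that $\sigma$ be $\coquinv$-sorted, determine which entry occupies which cell. The $\coquinv$-sorted hypothesis is essential here: permuting equal-height columns of $\sigma$ is exactly the $\perm_\lambda(t)$-fold redundancy exploited in \cref{thm:main}, and the $\coquinv$-sorted fillings are precisely a transversal of those orbits (the border $w$ must satisfy $w=\inc_\lambda(w)$, and the rows below are then forced by the pairings of $M$). Thus the forward map is injective on $\coquinv$-sorted fillings and surjective onto $\MLQ_M(\lambda,n)$.

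Finally I would match weights termwise, which is essentially carried out in the paragraph preceding the statement. Specializing $x_i=q=1$ gives $\wt(\sigma)(1,\dots,1;1,t)=t^{\coquinv(\sigma)}\prod_{u}\frac{1-t}{1-t^{\rarm(u)+1}}$, the product over cells $u\in\dg(\lambda)$ with $\sigma(u)\neq\sigma(\South(u))$; bottom-row cells have $\rarm(u)=0$ and so contribute $1$, matching the fact that $M$ has no pairing above row $1$. A pairing $p$ at $(r,k)$ is trivial iff $\sigma(r,k)=\sigma(r-1,k)$, contributing $1$ to $\wt(M)$ and, since then $(r,k)$ carries no arm factor and its $L$-triples are never $\coquinv$, contributing $1$ to $\wt(\sigma)(1,\dots,1;1,t)$. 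For a non-trivial $p$ one checks $f(p)=\rarm(r,k)$ (the unlabeled particles of row $r-1$ remaining after $p$ are exactly those in the columns of $\Rarm(r,k)$) and that $s(p)$ counts exactly the indices $j>k$ with $\Qc(\sigma(r,k),\sigma(r-1,k),\sigma(r-1,j))=1$, i.e.\ the $\coquinv$ triples based at $(r,k),(r-1,k)$; hence the pairing weight $t^{s(p)}(1-t)/(1-t^{f(p)+1})$ equals the contribution of $(r,k)$ to $\wt(\sigma)(1,\dots,1;1,t)$. Multiplying over all pairings yields $\wt(M)=\wt(\sigma)(1,\dots,1;1,t)$, and summing over the bijection gives the stated identity.

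The main obstacle is the bookkeeping in the first two steps: checking that the naive column-alignment pairing $(r,k)\leftrightarrow(r-1,k)$ coincides with what Martin's sequential, label-driven algorithm outputs from the standardized permutations $\pi_{r,\ell}$, and that $\coquinv$-sortedness is exactly the normalization making the inverse single-valued. Once that is in place, the weight comparison in the third step is the routine identification of $f(p)$ and $s(p)$ with $\rarm$ and the $\coquinv$ count already indicated in the text.
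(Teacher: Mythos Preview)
Your proposal is correct and follows essentially the same approach as the paper: the paper's argument is the prose surrounding the lemma (the paragraph before the example defines the forward map and asserts the bijection, and the paragraph after the example carries out precisely your termwise weight comparison $f(p)=\rarm(r,k)$ and $s(p)=\#\{j>k:\Qc(\sigma(r,k),\sigma(r-1,k),\sigma(r-1,j))=1\}$). You have organized the same content slightly more carefully by spelling out the inverse map and the role of $\coquinv$-sortedness as a transversal, but there is no substantive difference in method.
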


Finally, if we associate to $M\in \MLQ_M(\lambda,n)$ a weight in $x_1,\ldots,x_n,q$, we obtain an alternative multiline queue formula for $P_{\lambda}(X;q,t)$ in terms of Martin's multiline queues.  Define $x^M:=\prod_i x_i^{m_i(M)}$ where $m_i(M)$ is the total number of particles in column $i$ of $M$, and define 
\[\maj(M):=\sum_p \ell(p)-r(p)+1
\] 
where the sum is over all pairings $p$ in $M$ that wrap around the cylinder. Define the weight $\wt(M)(X_n;q,t)$ to be
\[
\wt(M)(X_n;q,t):=x^Mq^{\maj(M)}\prod_p \frac{t^{s(p)}(1-t)}{1-q^{\ell(p)-r(p)+1}t^{f(p)+1}},
\]
where the product is over all non-trivial pairings $p$. (These definitions are identical to the corresponding definitions in \cite{CMW18}.) It is now straightforward to check that $\wt(\sigma)=\wt(M)$ where $\sigma\in\Tab(\lambda,n)$ is the unique $\coquinv$-sorted $\quinv$-non-attacking filling corresponding to $M$ according to the map described above.

\begin{prop}\label{cor:martin P}
The Macdonald polynomial is given by
\[
P_{\lambda}(x_1,\dots,x_n;q,t)=\sum_{M\in\MLQ_M(\lambda,n)}\wt(M)(X_n;q,t)
\]
where the sum is taken over Martin's multiline queues as defined in \cref{def:mlq}.
\end{prop}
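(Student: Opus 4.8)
The plan is to obtain \cref{cor:martin P} by specializing \cref{thm:main} to finitely many variables and then transporting the resulting tableau sum across the weight-preserving correspondence with Martin's multiline queues set up in the paragraphs surrounding \cref{cor:martin bijection}.

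\emph{Step 1 (reduce to $X_n$).} Since $P_{\lambda}$ is a symmetric function, $P_{\lambda}(x_1,\dots,x_n;q,t)$ equals $P_{\lambda}(X;q,t)$ evaluated at $x_{n+1}=x_{n+2}=\cdots=0$. In the right-hand side of~\eqref{eq:main}, the term attached to a $\coquinv$-sorted $\quinv$-non-attacking filling $\sigma$ carries the monomial $x^{\sigma}=\prod_{u}x_{\sigma(u)}$, so it vanishes under this specialization unless every entry of $\sigma$ lies in $\{1,\dots,n\}$ --- that is, unless $\sigma\in\Tab(\lambda,n)$ --- and when it does not vanish it is unchanged; moreover for $n\ge\ell(\lambda)$ this set is nonempty, since the $\ell(\lambda)$ cells of the bottom row of $\dg(\lambda)$ are pairwise $\quinv$-attacking and hence must receive $\ell(\lambda)$ distinct entries. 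Therefore
\[
P_{\lambda}(x_1,\dots,x_n;q,t)=\sum_{\substack{\sigma\in\Tab(\lambda,n)\\\sigma\,\cqsort}}\wt(\sigma)(X_n;q,t).
\]

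\emph{Step 2 (transport to multiline queues).} I would then invoke the map $\sigma\mapsto M$ defined just before \cref{cor:martin bijection}: the particles of row $r$ of $M$ occupy the columns $\{\sigma(r,1),\dots,\sigma(r,\lambda'_r)\}$, the permutation $\pi_{r,\ell}$ is the standardization of the row-$r$ entries of $\sigma$ in the columns of height $\ell$, and the particle at $(r,\sigma(r,k))$ is paired to the one at $(r-1,\sigma(r-1,k))$. As explained there, the $\quinv$-non-attacking condition makes the two entries in any row distinct (so each row of $M$ is a legal configuration) and makes the column-$k$ pairing between rows $r$ and $r-1$ trivial precisely when $\sigma(r,k)=\sigma(r-1,k)$, while the $\coquinv$-sorted condition forces each labelled particle system to arise from a unique $\sigma$; hence $\sigma\mapsto M$ is a bijection from the $\coquinv$-sorted $\quinv$-non-attacking fillings in $\Tab(\lambda,n)$ onto $\MLQ_M(\lambda,n)$. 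The paragraph following \cref{cor:martin bijection} then upgrades the $x_i=q=1$ weight identity of that lemma to the full statement $\wt(\sigma)(X_n;q,t)=\wt(M)(X_n;q,t)$. Summing this over $\sigma$, equivalently over $M$, and combining with Step 1 yields the proposition.

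\textbf{Main obstacle.} The only step with real content is the refined weight identity underlying Step 2: matching $\wt(\sigma)$ and $\wt(M)$ as functions of $x,q,t$ rather than only at $x_i=q=1$. For the column-$k$ pairing $p$ between rows $r$ and $r-1$, writing $u=(r,k)$, one must check that this particle's label is $\ell(p)=\lambda_k$ (the top of the column-$k$ chain is the particle of row $\lambda_k$, where all entries of $\sigma$ are pairwise distinct, so it is never paired from above and is labelled ``$\lambda_k$''), that $r(p)=r$, that $f(p)=\rarm(u)$, that $s(p)$ equals the number of cells $(r-1,j)\in\Rarm(u)$ for which the $L$-triple $\{(r,k),(r-1,k),(r-1,j)\}$ is a $\coquinv$ triple, and that $p$ wraps the cylinder exactly when $\sigma(r,k)>\sigma(r-1,k)$, i.e.~exactly when $u$ has a descent into $\South(u)$. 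Granting these, $\ell(p)-r(p)+1=\leg(u)+1$, so the non-trivial pairing weight $t^{s(p)}(1-t)/(1-q^{\ell(p)-r(p)+1}t^{f(p)+1})$ is exactly the cell factor of $u$ in $\wt(\sigma)$, and $\maj(M)=\sum_{p\text{ wrapping}}(\ell(p)-r(p)+1)=\maj(\sigma)$; the degenerate pairings touching the bottom row are absorbed by the same conventions used in \cite{CMW18}. I expect this to be careful bookkeeping with the conventions of \cref{def:mlq}, not a genuine difficulty.
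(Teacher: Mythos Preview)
Your proposal is correct and follows essentially the same approach as the paper: specialize \cref{thm:main} to finitely many variables, then invoke the bijection $\sigma\mapsto M$ described before \cref{cor:martin bijection} together with the refined weight identity $\wt(\sigma)(X_n;q,t)=\wt(M)(X_n;q,t)$ asserted in the paragraph preceding the proposition. The paper treats that weight identity as ``straightforward to check'' and offers no further argument, so your sketch of the bookkeeping (matching $\ell(p)-r(p)+1$ with $\leg(u)+1$, $f(p)$ with $\rarm(u)$, $s(p)$ with the local $\coquinv$ count, and wrapping with descents) in fact goes slightly beyond what the paper writes out.
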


\begin{remark}\label{rem:mlq}
    A subtle distinction exists between the multiline queues of \cite{martin-2020} described in \cref{def:mlq} and the multiline queues defined in \cite{CMW18}, which manifests when $\lambda$ is not a strict partition. The difference is in the following: in the multiline queues defined in \cite{CMW18}, for a given row $r$ and set of particles labeled $\ell$, all particles with an unlabeled particle directly below in row $r-1$ must pair first (necessarily to those particles directly below them). In particular, this disallows the pairings in which there are two particles with the same label at sites $(r,j)$ and $(r-1,j)$ that are not paired to each other. For example, the pairing in \cref{fig:MLQ tab} of particles labeled 4 from row 4 to row 3 is allowed in Martin's multiline queues, but disallowed according to \cite{CMW18}. Due to this technicality, it has been difficult to define ``natural'' statistics on tableaux to align the multiline queues from \cite{CMW18}, and there hasn't been a clear way to see the plethystic relationship between the corresponding multiline queue formula for $P_{\lambda}$ and the $\quinv$ tableaux formula for $\HH_{\lambda}$.
\end{remark}

Consequently, we conjecture an alternative tableaux formula for the \emph{ASEP polynomials} $f_{\alpha}(X_n;q,t)$, which form a nonsymmetric decomposition of $P_{\lambda}(X_n;q,t)$:
\[P_\lambda(X_n;q,t)=\sum_{\alpha\,:\,\sort(\alpha)=\lambda}f_{\alpha}(X_n;q,t).\]
Each polynomial $f_{\alpha}(X_n;q,t)$ coincides with the \emph{permuted basement Macdonald polynomial} \linebreak $E^{\tau}_{\inc(\alpha)}(X_n;q,t)$, where $\tau$ is any permutation such that $\alpha_{\tau(1)}\leq\alpha_{\tau(2)}\leq\cdots\leq\alpha_{\tau(n)}$, equivalently, $\tau\cdot \alpha:=(\alpha_{\tau_1},\ldots,\alpha_{\tau_n})=\inc(\alpha)$. See \cite{CMW18} for details on the ASEP polynomials, and \cite{Ale16,Fer11} for the general definition of $E_\gamma^\tau$.

For a (weak) composition $\alpha=(\alpha_1,\ldots,\alpha_n)$ with $k$ nonzero parts, define $\beta(\alpha)$ to be the set of words in $\Z_{>0}^k$ that select and reorder the positive parts of $\alpha$ into weakly decreasing order: 
\[\beta(\alpha)=\{\tau_1\cdots \tau_k\in\Z_{>0}^k\,:\  \alpha_{\tau_1}\geq \cdots \geq \alpha_{\tau_k}\}.\] 
%\[\beta(\alpha)=\{\tau_1\cdots \tau_k\in\Z_{>0}^k: \tau=(\tau_1,\ldots,\tau_n)\in S_n,\  \tau\cdot\alpha=\sort(\alpha)\}.\] 
Each word $\tau_1\cdots \tau_k\in\beta(\alpha)$ can be viewed as the first $k$ entries of a permutation $\tau=\tau_1\cdots\tau_n\in S_n$ such that $\tau\cdot\alpha=\sort(\alpha)$. We say a word in $\Z_{>0}^k$ is \emph{compatible with} $\alpha$ if it belongs to the set $\beta(\alpha)$. See \cref{ex:betaalpha}.

For a filling $\sigma$, we shall use the notation $r_1(\sigma)$ to denote the bottom row of $\sigma$.

\begin{conjecture}\label{conj:f}
Let $\alpha=(\alpha_1,\ldots,\alpha_n)$ be a composition with $\lambda=\sort(\alpha)$. The ASEP polynomial $f_{\alpha}(X_n;q,t)$ is given by
\[
f_{\alpha}(X_n;q,t)=\sum_{\substack{\sigma\in\Tab(\lambda,n)\\\sigma\,\cqsort\\r_1(\sigma)\in\beta(\alpha)}}\wt(\sigma)(X_n;q,t).
\]
where the sum is over all $\coquinv$-sorted non-attacking tableaux $\sigma$ whose bottom row $r_1(\sigma)$ is compatible with $\alpha$.
\end{conjecture}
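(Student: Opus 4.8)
\emph{Proof strategy for \cref{conj:f}.} The plan is to show that the right-hand side of \cref{conj:f} satisfies the known characterization of the ASEP polynomials. Write $\widehat{f}_{\alpha}(X_n;q,t)$ for that right-hand side, i.e.\ the sum of $\wt(\sigma)(X_n;q,t)$ over $\coquinv$-sorted non-attacking $\sigma\in\Tab(\lambda,n)$ with $r_1(\sigma)\in\beta(\alpha)$. First I would record two elementary facts. Since $\sigma$ is non-attacking, its bottom row $r_1(\sigma)$ is a word of $\ell(\lambda)$ \emph{distinct} positive integers, and any such word lies in $\beta(\alpha)$ for exactly one composition $\alpha$ of length $n$ with $\sort(\alpha)=\lambda$ --- namely the one whose supported positions carry $\lambda_1,\dots,\lambda_{\ell(\lambda)}$ in the order prescribed by $r_1(\sigma)$. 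Hence $\{\sigma\in\Tab(\lambda,n):\sigma\,\cqsort\}$ is the disjoint union, over $\alpha$ with $\sort(\alpha)=\lambda$, of the index sets of the $\widehat{f}_{\alpha}$, and \cref{cor:martin P} gives $\sum_{\sort(\alpha)=\lambda}\widehat{f}_{\alpha}=P_{\lambda}(X_n;q,t)$. It therefore remains only to identify the individual summands.

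The ASEP polynomials $\{f_{\alpha}\}_{\sort(\alpha)=\lambda}$ are uniquely determined, among families summing to $P_{\lambda}(X_n;q,t)$, by the exchange relations of \cite{CGW-2015,CMW18}: they transform into one another under the Demazure--Lusztig operators of the (affine) Hecke algebra --- a relation $f_{s_i\alpha}=\mathcal{T}_i f_{\alpha}$ in $x_i,x_{i+1}$ whenever $\alpha_i\neq\alpha_{i+1}$, together with a cyclic relation exchanging $f_{(\alpha_1,\dots,\alpha_n)}$ and $f_{(\alpha_n,\alpha_1,\dots,\alpha_{n-1})}$ --- and by the base case $f_{\gamma}=E_{\gamma}$, the trivial-basement nonsymmetric Macdonald polynomial, when $\gamma$ is weakly increasing. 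I would prove that the $\widehat{f}_{\alpha}$ satisfy all of these. For the base case, the $\coquinv$-sorted non-attacking tableaux whose bottom row lies in $\beta(\gamma)$ are exactly the supports of a $\quinv$-analogue of the HHL08 non-attacking-filling formula for $E_{\gamma}$, obtained by restricting the superization computation behind \eqref{eq:main} to a fixed bottom configuration; matching weights is then a bookkeeping exercise with the $\widehat{\texttt{arm}}$ statistic. For the exchange relation I would introduce a probabilistic operator on the \emph{bottom} border, modeled on the operators $\widetilde{\op}_i$ of \cref{sec:operators} (and on the $\widetilde{\tau}_i$ of \cref{sec:inv operators}) but scanning upward from row $1$, whose effect is to swap $r_1(\sigma)_i$ with $r_1(\sigma)_{i+1}$ while transporting $\wt(\cdot)(X_n;q,t)$ by precisely the rational factor contributed by $\mathcal{T}_i$; the verification is the same case analysis (i)--(iv) as in the proof of \cref{prop:balance}, now carried out keeping $x_i$ and $x_{i+1}$ distinct rather than specializing them. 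The cyclic relation should be read off from the corresponding cyclic move on Martin's multiline queues via the bijection of \cref{cor:martin bijection}.

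An alternative, more geometric route is to work entirely with multiline queues: by \cref{cor:martin bijection} and \cref{cor:martin P}, the $\coquinv$-sorted non-attacking tableaux with $r_1(\sigma)\in\beta(\alpha)$ are in weight-preserving bijection with Martin's multiline queues of type $(\lambda,n)$ whose bottom particle configuration (positions together with labels) encodes $\alpha$, and one would match this, sector by sector in the bottom row, with the multiline-queue formula of \cite{CMW18} for $f_{\alpha}$.

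I expect the main obstacle to be exactly the phenomenon flagged in \cref{rem:mlq}: $\coquinv$-sorting is imposed at the \emph{top} of the tableau, while the decomposition of \cref{conj:f} is indexed by the \emph{bottom} row, and Martin's multiline queues differ from those of \cite{CMW18} precisely through extra pairings that can propagate down to row $1$. In the first approach, the delicate point is to show that the bottom-border operators act transparently with respect to the top sorting, so that the restricted sums telescope as in the proof of \cref{thm:main} without changing which $\beta(\alpha)$ a tableau belongs to. In the second, the delicate point is to show that the reconciliation between the two multiline-queue models preserves the bottom particle configuration; this is the step currently missing, and the reason \cref{conj:f} is stated as a conjecture.
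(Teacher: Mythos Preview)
The statement is a \emph{conjecture} in the paper, not a theorem; the paper offers no proof. What you have written is, appropriately, a proof \emph{strategy} rather than a proof, and you are candid about this in your final paragraph. Your proposed route---verify that the $\widehat{f}_{\alpha}$ satisfy the Hecke-algebra exchange relations and cyclic shift, together with the base case $\widehat{f}_{\gamma}=E_{\gamma}$ for anti-dominant $\gamma$---is essentially the ``qKZ family'' approach the paper itself flags in the Remark following \cref{conj:f} as the natural line of attack. So at the level of strategy you are aligned with the author.

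That said, several of your intermediate claims are themselves open, not just the final obstacle you name. Your base case asserts that restricting the superization argument behind \eqref{eq:main} to a fixed bottom row yields the HHL08 formula for $E_{\gamma}$; but the compression in \cref{sec:operators} operates on the \emph{top} border and does not obviously respect a fixed bottom row, so this ``bookkeeping exercise'' is not established anywhere in the paper (the Remark only records the much weaker $t=0$ case). Your proposed bottom-border probabilistic operator, scanning upward from row $1$ and realizing $\mathcal{T}_i$ on $\wt(\sigma)(X_n;q,t)$ with $x_i\neq x_{i+1}$, does not exist in the paper: the operators $\widetilde{\op}_i$ and $\widetilde{\tau}_i$ are weight-preserving only after the $x$-variables have been symmetrized away inside $\wt$, and promoting the row-by-row balance of \cref{prop:balance} to track the full monomial $x^{\sigma}$ is a genuinely new computation, not ``the same case analysis.'' Finally, your alternative route via \cref{cor:martin bijection} runs directly into \cref{rem:mlq}: the bijection to Martin's multiline queues is established, but the sector-by-sector match with the \cite{CMW18} queues that actually compute $f_{\alpha}$ is exactly what is missing. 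In short, your outline is reasonable and matches the paper's own suggestions, but none of its steps is currently a lemma one can cite; each is part of why \cref{conj:f} remains a conjecture.
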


\begin{example}\label{ex:betaalpha}
For instance if $\alpha=(0,2,3,1,0,2,1)$, then 
\[f_\alpha(X_6;q,t)=E_{(0,0,1,1,2,2,3)}^{\tau}(X_6;q,t),\qquad \tau=(1,5,4,7,2,6,3).
\]
The set of words corresponding to partial permutations compatible with $\alpha$ is 
\[\beta(\alpha)=\{3\,2\,6\,4\,7,\ 3\,6\,2\,4\,7,\ 3\,2\,6\,7\,4,\ 3\,6\,2\,7\,4\}.
\]
Then $f_\alpha(X_7;q,t)$ is the sum over all $\coquinv$-sorted non-attacking fillings 
$\sigma\in\Tab((3,2,2,1,1),7)$ with $r_1(\sigma)\in\beta(\alpha)$.
\end{example}

\begin{remark}
From the proof of \cref{cor:martin bijection}, we have that $f_{\alpha}(1,\ldots,1;1,t)$ is equal to the sum on the right hand side specialized at $X_n=q=1$. Equality can also be deduced at $t=0$ from comparing to the coinversion-free HHL tableaux formula for $E^{\sigma}_{\inc(\alpha)}(X_n;q,0)$. However, for the general case, one would need to show that the nonsymmetric components of the $\coquinv$ formula are a \emph{qKZ family} \cite[Definition 1.18]{CMW18}, possibly using similar techniques to the proofs in \cite{CMW18}. It would be interesting to find a combinatorial proof relating the formula of \cref{conj:f} to the HHL tableaux formula for $E_{\inc(\alpha)}^\sigma(X_n;q,t)$.
\end{remark}

\section{Jack specialization}\label{sec:jack}

The Jack polynomials $J_{\lambda}(X;\alpha)$ can be obtained as a limit of $(1-t)^{-|\lambda|}J_{\lambda}(X;t^{\alpha},t)$ with $t\rightarrow 1$. An elegant tableaux formula was found by Knop and Sahi in \cite[Theorem 4.10]{KS96}; this formula was recovered in \cite[Section 8]{HHL05} by specializing the HHL formula \eqref{eq:P inv} for $J_{\lambda}(X;q,t)$:
\begin{equation}\label{eq:Jack HHL}
J_{\lambda}(X;\alpha)=\sum_{\substack{\sigma\in\dg(\lambda)\rightarrow\Z^+\\\sigma\inona}}x^{\sigma}\prod_{\substack{u\in\overline{\dg}(\lambda)\\\sigma(u)=\sigma(\South(u))}} \alpha(\leg(u)+1)+\arm(u)+1.
\end{equation}
Knop and Sahi also found an alternative formula for $J_{\lambda}(X;\alpha)$ \cite{Spersonal}, although as far as the author is aware, this had not been published. We recover this formula by specializing \eqref{eq:J quinv} and taking the limit.
\begin{prop}
The Jack polynomial is given by
\begin{equation}\label{eq:Jack quinv}
J_{\lambda}(X;\alpha)=\sum_{\substack{\sigma\in\dg(\lambda)\rightarrow\Z^+\\\sigma\qnona}}x^{\sigma}\prod_{\substack{u\in\overline{\dg}(\lambda)\\\sigma(u)=\sigma(\South(u))}} \alpha(\leg(u)+1)+\rarm(u)+1.
\end{equation}
\end{prop}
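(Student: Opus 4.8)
The plan is to obtain \eqref{eq:Jack quinv} from the $\quinv$ integral-form identity \eqref{eq:J quinv} by specializing $q=t^{\alpha}$, dividing by the appropriate power of $(1-t)$, and letting $t\to 1$, using that $J_{\lambda}(X;\alpha)=\lim_{t\to 1}(1-t)^{-|\lambda|}J_{\lambda}(X;t^{\alpha},t)$.

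First I would count the product factors appearing in \eqref{eq:J quinv} for a fixed $\quinv$-non-attacking filling $\sigma$ of $\dg(\lambda)$. Each cell $u\in\dg(\lambda)$ contributes exactly one factor: a cell $u\in\overline{\dg}(\lambda)$ above the bottom row contributes $1-q^{\leg(u)+1}t^{\rarm(u)+1}$ when $\sigma(u)=\sigma(\South(u))$ and $1-t$ otherwise, while a bottom-row cell always contributes a factor $1-t$ from the second product (with the convention $\South(u)=\infty$, the inequality $\sigma(u)\neq\sigma(\South(u))$ holds there). Since there are $\ell(\lambda)$ bottom-row cells and $|\overline{\dg}(\lambda)|=|\lambda|-\ell(\lambda)$, there are exactly $|\dg(\lambda)|=|\lambda|$ factors in total. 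Hence dividing \eqref{eq:J quinv} by $(1-t)^{|\lambda|}$ cancels every factor $1-t$ and leaves
\[
(1-t)^{-|\lambda|}J_{\lambda}(X;q,t)=\sum_{\substack{\sigma:\dg(\lambda)\to\Z^+\\\sigma\,\qnona}}q^{\maj(\sigma)}t^{\coquinv(\sigma)}x^{\sigma}\prod_{\substack{u\in\overline{\dg}(\lambda)\\\sigma(u)=\sigma(\South(u))}}\frac{1-q^{\leg(u)+1}t^{\rarm(u)+1}}{1-t}.
\]
Next I would set $q=t^{\alpha}$, which is harmless since the coefficients of $J_{\lambda}$ lie in $\Z[q,t]$: the monomial prefactor becomes $t^{\alpha\maj(\sigma)+\coquinv(\sigma)}$ and each surviving factor becomes $\frac{1-t^{\alpha(\leg(u)+1)+\rarm(u)+1}}{1-t}$. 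Working in finitely many variables $X_N$, where $J_{\lambda}(X_N;\alpha)$ is a genuine polynomial and the coefficient of each monomial $x^{\sigma}$ is a finite sum over fillings of a fixed content, I would take the limit $t\to 1$ termwise: the prefactor tends to $1$, and $\frac{1-t^{c}}{1-t}\to c$ for any real $c$ by L'Hôpital, so each factor tends to $\alpha(\leg(u)+1)+\rarm(u)+1$. This produces exactly \eqref{eq:Jack quinv}.

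The argument is essentially formal, so the only points needing care are: (i) the bookkeeping in the counting step — it is precisely the ``extra'' $\ell(\lambda)$ factors $1-t$ indexed by the bottom row that make the total power of $(1-t)$ equal $|\lambda|$, so that all of them cancel — and (ii) justifying that the limit $t\to 1$ commutes with the (only locally finite) sum over fillings, which is handled by passing to $N$ variables and fixing the content. Both are dealt with exactly as in the derivation of \eqref{eq:Jack HHL} from \eqref{eq:P inv} in \cite[Section 8]{HHL05}, so nothing genuinely new arises in the $\quinv$ setting and I do not anticipate a real obstacle.
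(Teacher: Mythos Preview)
Your proposal is correct and is exactly the approach indicated in the paper, which simply states that the formula is recovered ``by specializing \eqref{eq:J quinv} and taking the limit'' without spelling out the details. Your careful counting of the $|\lambda|$ factors (including the $\ell(\lambda)$ bottom-row factors of $1-t$) and the termwise limit argument fill in precisely the steps the paper leaves implicit.
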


One observes that just like \eqref{eq:P quinv} is a more efficient formula than \eqref{eq:P inv}, \eqref{eq:Jack quinv} is more efficient than \eqref{eq:Jack HHL} in having fewer terms due to the more restrictive $\quinv$-non-attacking condition. 

\begin{example} 
The following example demonstrates a computation of $[m_{2,1,1}]J_{3,1}(X;\alpha)$ using both formulas. Below we list all $\inv$-non-attacking tableaux and their weights for content $x_1^2x_2x_3$.
\begin{center}
\begin{tikzpicture}[scale=0.5]
\def\h{3cm};
\node[label=$1+2\alpha$] (A1) at (0*\h,0)  {\tableau{3\\1\\1&2}};
\node[label=$1+2\alpha$] (A2) at (1*\h,0) {\tableau{2\\1\\1&3}};
\node[label=$1+\alpha$]  (A3) at (2*\h,0) {\tableau{1\\1\\2&3}};
\node[label=$1+\alpha$]  (A4) at (3*\h,0) {\tableau{1\\1\\3&2}};
\node[label=$1$]  (A5) at (4*\h,0) {\tableau{1\\3\\1&2}};
\node[label=$1$]  (A6) at (5*\h,0) {\tableau{1\\2\\1&3}};
\node[label=$1$]  (A7) at (6*\h,0) {\tableau{1\\2\\3&1}};
\node[label=$1$]  (A8) at (7*\h,0) {\tableau{1\\3\\2&1}};
\node[label=$1$] (A9) at (8*\h,0)  {\tableau{3\\1\\2&1}};
\node[label=$1$] (A10) at (9*\h,0) {\tableau{2\\1\\3&1}};

\end{tikzpicture}
\end{center}
Now we list the $\quinv$-non-attacking tableaux and their weights for the same coefficient.
\begin{center}
\begin{tikzpicture}[scale=0.5]
\def\h{3cm};
\node[label=$2+2\alpha$] (A1) at (0*\h,0)  {\tableau{3\\1\\1&2}};
\node[label=$2+2\alpha$] (A2) at (1*\h,0) {\tableau{2\\1\\1&3}};
\node[label=$1+\alpha$]  (A3) at (2*\h,0) {\tableau{1\\1\\2&3}};
\node[label=$1+\alpha$]  (A4) at (3*\h,0) {\tableau{1\\1\\3&2}};
\node[label=$1$]  (A5) at (4*\h,0) {\tableau{1\\3\\1&2}};
\node[label=$1$]  (A6) at (5*\h,0) {\tableau{1\\2\\1&3}};
\node[label=$1$]  (A7) at (6*\h,0) {\tableau{1\\2\\3&1}};
\node[label=$1$]  (A8) at (7*\h,0) {\tableau{1\\3\\2&1}};
\end{tikzpicture}
\end{center}
Both formulas sum to $[m_{2,1,1}]J_{3,1}(X;\alpha)=10+6\alpha$.
\end{example}

We conclude with an open question. 
\begin{question}
Is there a statistical mechanics model whose stationary probabilities are proportional to nonsymmetric components of the Jack polynomial? Further, could such a model have dynamics connected to the $\quinv$ statistic on $\quinv$-non-attacking tableaux?
\end{question}

\medskip
\noindent\textbf{Acknowledgments.}
The author was supported by NSERC grant RGPIN-2021-02568. We thank Sylvie Corteel, Leonid Petrov, Siddhartha Sahi, Travis Scrimshaw, and Lauren Williams for valuable conversations. We also thank the anonymous referees for helpful comments on the first version of the paper.

%    Bibliographies can be prepared with BibTeX using amsplain,
%    amsalpha, or (for "historical" overviews) natbib style.
\bibliographystyle{plain}
\bibliography{Macbib}
%    Insert the bibliography data here.

\end{document}